\DeclareMathOperator{\Ima}{Im}
\newtheorem{assumption}{Assumption}
\newtheorem{remark}[theorem]{Remark}
\newcommand{\w}[0]{\omega}
\newcommand{\PP}[0]{\mathbb{P}}
\newcommand{\E}[0]{\mathbb{E}}
\def\Var{{\textrm{Var}}\,}
\newcommand{\LQ}[0]{\left[}
\newcommand{\RQ}[0]{\right]}
\newcommand{\SD}[0]{\mathbb{S}}
\newcommand{\SDQ}[0]{\widehat{\mathbb{S}}}
\newcommand{\EQ}[0]{\widehat{\mathbb{E}}}
\newcommand{\Yu}[0]{\underline{Y}}
\newcommand{\Y}[0]{Y}
\newcommand{\vu}[0]{\underline{v}}
\newcommand{\uu}[0]{\underline{u}}
\newcommand{\pu}[0]{\underline{p}}
\newcommand{\qu}[0]{\underline{q}}
\newcommand{\xu}[0]{\underline{x}}
\newcommand{\ru}[0]{\underline{r}}
\newcommand{\yb}[0]{\mathbf{y}}
\newcommand{\pb}[0]{\mathbf{p}}
\newcommand{\vb}[0]{\mathbf{v}}
\newcommand{\Xhat}[0]{\widehat{\mathcal{X}}}
\newcommand{\SLR}[0]{S_{\text{LR}}}
\newcommand{\SLRM}[0]{S_{\text{LRM}}}
\newcommand{\SLRC}[0]{S_{\text{LRC}}}
\newcommand{\PLR}[0]{P_{\text{LR}}}
\newcommand{\PLRM}[0]{P_{\text{LRM}}}
\newcommand{\PLRC}[0]{P_{\text{LRC}}}
\newcommand{\POPM}[0]{P_{\text{OPM}}}
\newcommand{\POP}[0]{P_{\text{OP}}}
\newcommand{\SOP}[0]{S_{\text{OP}}}
\newcommand{\POPC}[0]{P_{\text{OPC}}}
\newcommand{\unob}[0]{\mathbbm{1}}
\newcommand{\tol}[0]{{\text{Tol}}}
\newcommand{\Nit}[0]{N_{\text{it}}}
\newcommand{\Ys}[0]{\mathcal{Y}}
\newcommand{\Ps}[0]{\mathcal{P}}
\newcommand{\C}[0]{\mathcal{C}}
\newcommand{\B}[0]{\mathcal{B}}
\newcommand{\Ex}[0]{\mathcal{E}}
\newcommand{\Ms}[0]{M_s}
\newcommand{\setR}[0]{\mathbb{R}}
\newcommand{\I}[0]{\mathcal{I}}
\newcommand{\IG}[0]{\mathcal{I}_{G^0}}
\newcommand{\ES}[0]{\mathscr{Y}}
\newcommand{\df}[0]{\kappa}
\definecolor{Tommaso}{rgb}{0, 0, 1}
\definecolor{Tommaso2}{rgb}{1, 0, 0}
\definecolor{Fabio+}{rgb}{1,1,0}
\begin{document}

\begin{center}

\begin{Large}
\textbf{Preconditioners for robust optimal control problems under uncertainty}
\end{Large}

\medskip

Fabio Nobile$^{1}$ and Tommaso Vanzan$^{2}$

\medskip

${}^1$ CSQI Chair, Institute de Math\'ematiques, \'Ecole Polytechnique F\'ed\'erale de Lausanne, CH-1015 Lausanne, Switzerland, fabio.nobile@epfl.ch.

${}^2$ CSQI Chair, Institute de Math\'ematiques, \'Ecole Polytechnique F\'ed\'erale de Lausanne, CH-1015 Lausanne, Switzerland, tommaso.vanzan@epfl.ch.

\end{center}
\begin{abstract}
The discretization of robust quadratic optimal control problems under uncertainty using the finite element method and the stochastic collocation method leads to large saddle-point systems, which are fully coupled across the random realizations. Despite its relevance for numerous engineering problems, the solution of such systems is notoriusly challenging. In this manuscript, we study efficient preconditioners for all-at-once approches
using both an algebraic and an operator preconditioning framework.
We show in particular that for values of the regularization parameter not too small, the saddle-point system can be efficiently solved by preconditioning in parallel all the state and adjoint equations. For small values of the regularization parameter, robustness can be recovered by the additional solution of a small linear system, which however couples all realizations. A mean approximation and a Chebyshev semi-iterative method are investigated to solve this reduced system.
Our analysis considers a random elliptic partial differential equation whose diffusion coefficient $\df(x,\w)$ is modeled as an almost surely continuous and positive random field, though not necessarily uniformly bounded and coercive. We further provide estimates on the dependence of the preconditioned system on the variance of the random field. Such estimates involve either the first or second moment of the random variables $1/\min_{x\in \overline{D}} \df(x,\w)$ and $\max_{x\in \overline{D}}\df(x,\w)$, where $D$ is the spatial domain.
The theoretical results are confirmed by numerical experiments, and implementation details are further addressed.
\end{abstract}

\section{Introduction}
Optimal Control Problems (OCPs) constrained by deterministic Partial Differential Equations (PDEs) have been extensively studied in the last decades since they
are essential tools in the design of complex engineering systems, see e.g. \cite{lions1971optimal,hinze2008optimization,troltzsch2010optimal}. However, the physical system under study is often affected by uncertainties, either due to a lack of knowledge on some parameters defining the model or due to an intrinsic randomness in the system.
To have more reliable results, it is important to account for the ubiquitous uncertainty in nature by considering OCPs constrained by random PDEs, which belong to the class of OCPs Under Uncertainty (OCPUU). 
In such OCPs, the objective functional involves suitable statistical measures, often called risk measures \cite[Chapter 6.3]{shapiro2014lectures}, of
the quantity of interest to be minimized. Examples of risk measures are an expectation, an expectation plus variance, a quantile, or a conditional expectation above a quantile, also called Conditional Value at Risk (CVaR) \cite{Kouri_ex,Kouri_Cvar}.
In this article, we consider a mean-variance quadratic model. Our objective functional involves an expectation of the distance of the PDE solution from a target state, an additional penalization on the variance of the PDE solution, and a standard penalization on the control.

There are two possible paradigms to minimize numerically such functionals involving expectations over a probability measure. The first one, called Stochastic Approximation (SA) method \cite[Chapter 5.9]{shapiro2014lectures}, includes iterative methods that at each iteration draw new realizations independent from the previous ones. Examples of such approaches are the stochastic gradient method and its variants, which have been recently studied for OCPUU in \cite{martin2018analysis,Martin_Nobile2,Geiersbach,ReporUQ}.

In this manuscript, we adopt a second approach called Sample Average Approximation (SAA) method \cite[Chapter 5.1]{shapiro2014lectures}, in which the original objective functional is replaced by an accurate approximation obtained discretizing once and for all the probability space using Stochastic Collocation Methods (SCMs), with Monte Carlo, Quasi-Monte Carlo \cite{doi:10.1137/19M1294952}, or Gaussian quadrature formulae. We do not consider approximations based on Multilevel Monte Carlo \cite{van2019robust} and sparse grids formulae \cite{Kourisparse,schillings2013sparse}, since they may not preserve the convexity of the objective functional.
After discretization, we obtain an extremely large global system involving $N$ state equations, $N$ adjoint equations and an optimality condition, where $N$ is the number of collocation points.


The properties of the global linear system depend strongly on whether the control is \textit{stochastic} or \textit{deterministic}. In the first case, one assumes that the realization of the randomness is observable, and thus an optimal control can be established for each single random realization, leading thus to a \text{stochastic} optimal control. On the one hand, such problems are easier to solve using SCMs since the global linear system is often decoupled across all random samples, so that one actually needs to solve a sequence of independent, deterministic OCPs, one for each sample, for which optimal preconditioners are available \cite{rees2010all,rees2010optimal,pearson2012new,zulehner2011nonstandard,mardal2011preconditioning,Borzi,Borzi2}.
On the other hand, a discretization based on Stochastic Galerkin Methods (SGMs) \cite{ghanem,pellissetti} leads to a fully coupled saddle point system across the random components, even when the optimal control is stochastic. In \cite{benner2016block}, the authors consider an OCP with a stochastic control discretized using SGM, obtaining the classical saddle point matrix of deterministic OCPs, but whose blocks  have a Kronecker structure due to the SGM discretization. 

In the second case, that is the one we are interested in, the randomness is not observable a-priori, and thus one computes a unique \textit{deterministic} control valid for all random realizations. This setting is often called robust OCPUU, since the deterministic control minimizes the risk of having large values of the objective functional. These problems are harder, since the global system fully couples all the random samples. 

Let us now review previous works concerning all-at-one approaches for robust OCPUU.
Gradient-based approaches, which are essentially Gauss-Seidel iterative methods, permit to obtain the solution iteratively, solving the three sets of equations (state, adjoint, optimality) sequentially at each iteration. Gradient descent combined with a Multilevel Monte Carlo estimation of the expectations has been proposed in \cite{van2019robust}.
In \cite{rosseel2012optimal}, the authors derive optimality conditions for a general quadratic OCP constrained by a random elliptic PDE. Their control can be either stochastic or deterministic. They interestingly remark that a discretization using SCM leads to a global system coupling all realizations, unless the control is fully stochastic as previously mentioned. Hence, since SCM loses its non-intrusivity property, they focus on SGM and they discuss numerical results using either a mean-based preconditioner \cite{powell2009block} or collective smoothing multigrid \cite{Borzi}. A MG/OPT algorithm based on a hierarchy of sparse grid approximations of the objective functional has been proposed in \cite{kouri2014multilevel}. At each level of the hierarchy, the approximated reduced optimality system is solved using the conjugate gradient algorithm. Another MG/OPT algorithm based on a classical hierarchy of geometric meshes has been analyzed in \cite{van2020mg}. Other works aimed to reduced the computatational costs are \cite{Kourisparse}, where the authors studied a trust region algorithm that adapts the number of collocation points during the optimization process, and \cite{zahr2019efficient} which combines the previous algorithm with model order reduction.

In this manuscript, we study optimal preconditioners for linear systems obtained from a SCM discretization of a robust quadratic OCP constrained by a random PDE. These preconditioners can then be used either directly to solve the optimality system of a robust quadratic OCP, or possibly inside one the previously cited optimization algorithms.

Despite the remarks of \cite{rosseel2012optimal} about the loss of non-intrusivity of SCM for OCPUU with deterministic control, we are interested in analysing SCMs for the following reasons. 

First, SCM maintains its advantages in terms of applicability with respect to general parameter distributions and ease of implementation \cite[Chapter 10]{smith2013uncertainty}.

Second, one can construct preconditioners whose action can be fully parallelized across the realizations of the randomness, one example being the preconditioner proposed in \cite{Kouri2018}. That is, while a global system involving all realizations has to be solved, the preconditioner does not couple the realizations, as it requires to solve approximately (i.e. to precondition) independently each forward and adjoint problem. In this perspective, this preconditioner has clearly advantages in terms of parallelization and memory distribution in a high performance setting with respect to other solutions strategies based on SGM. In this manuscript, we analyse, among others, the performance of the preconditioner proposed in \cite{Kouri2018}, by providing theoretical estimates for the spectrum of the preconditioned system.

As the regularization parameter gets smaller, the preconditioner introduced in \cite{Kouri2018} becomes inefficient. 
Thus, we introduce a first new preconditioner, named $\PLR$, which still preconditions each state and adjoint equation in parallel, but requires the additional solution of a small linear system. We partially characterize the spectrum of the preconditioned system and show numerically its $\beta$-robustness, $\beta$ denoting the regularization parameter on the control.
Finally, to derive a provably $\beta$-robust preconditioner, we study the optimality system at the fully-continuous level, and our analysis leads to a second new preconditioner, named $\POP$, for which a complete theory is available.  
Both the first and second preconditioner require to invert the sum of all inverses of the stiffness matrices. A mean approximation, combined possibly with a Chebyshev semi-iteration, is shown to be sufficient to efficiently approximate this inverse for quite a wide range of parameters, leading to \textit{practical} $\PLRM,\PLRC,\POPM,\POPC$ preconditioners, where the subscript $M$ stands for ``mean'' and $C$ for Chebyshev.

We remark that the development of robust preconditioners for small values of the regularization parameter is not obvious and poses some interesting mathematical and computational challenges which, surprisingly, are similar to those encountered in deterministic OCP when the control acts locally, either on a portion of the domain \cite{elvetun2016pde}, or on a portion of the boundary \cite{heidel2019preconditioning}. 

We further stress that our analysis does not assume that the random bilinear form is uniformly bounded and coercive with respect to the randomness, which is a frequent simplifying hypothesis in the literature, see, e.g., \cite{powell2009block,ullmann,benner2016block}.
Hence, our results will also cover the case of log-normally distributed random fields, which are common models in engineering applications, and they will cast light on how the preconditioners' performance is affected by the variance of the random fields.

To develop optimal preconditioners for our robust OCPUU, we rely on two different approaches. The first one, used to derive the preconditioner $\PLR$ is algebraic and has its roots in the seminal work of \cite{murphy2000note}, where the authors proposed an optimal, but expensive, preconditioner for saddle point matrices which relies on the exact Schur complement. For deterministic OCP, several preconditioners based on approximations of the exact Schur complements have been study in the last decade \cite{rees2010all,rees2010optimal,pearson2012new,kourikkt,liu2020parameter,pearson2018matching}.
Concerning OCPUU, the same strategy has been applied in \cite{benner2016block} in combination with SGMs, but we remark once more that their problem is not an instance of robust OCP.
The second approach, used to derive the preconditioner $\POP$, consists in the so-called ``operator preconditioning" paradigm, and is based on identifying the saddle point system as a linear operator acting between Hilbert spaces, and finding proper weighted-norms such that the continuity constants of the map and of its inverse are independent on the parameters of interest. We refer the interested reader to \cite{malek2014preconditioning,zulehner2011nonstandard,mardal2011preconditioning,functional_iterative,khan2019robust}. While studying this approach, we will discuss the well-posedness of the OCPUU and the development of robust preconditioners at the continuous level for log-normal fields, without relying on the framework developed in \cite{gittelson,schwab}.

The manuscript is organized as follows. In Section \ref{sec:notation} we introduce the notation, while in Section \ref{sec:problemsetting} we define our model problem, provide sufficient conditions for well-posedness and derive the optimality conditions. Section \ref{sec:discretization} introduces the discretization both in probability and in space. Section \ref{sec:AlgebraicPreconditioners} deals with algebraic preconditioners for saddle point matrices based on approximations of the Schur complement. Section \ref{sec:OP} derives preconditioners using the operator preconditioning approach.
Finally, Section \ref{sec:num} presents numerical experiments validating the theoretical results.

\section{Notation}\label{sec:notation}
Let us consider a bounded domain $D\subset \setR^d$, $d\in \{1,2,3\}$, with Lipschitz boundary and a complete probability space $(\Omega,\mathcal{F},\mathbb{P})$. 
For every $p\in [1,\infty]$, $L^p(D)$ denotes the space of $p-$Lebesque integrable functions over $D$ and $H^1(D)$ is the Sobolev space
\[ H^1(D):=\left\{ v\in L^2(D) : \partial_{x_i} v \in L^2(D),\quad \text{for }i=1,\dots,d\right\}.\]
The natural space for our analysis is $H^1_0(D)$, which is the subspace of $H^1(D)$ containing functions that vanish on $\partial D$, equipped with the norm $\|y\|_{H^1_0(D)}:=\|\nabla y\|_{L^2(D)}.$ The topological dual of $H^1_0(D)$ is $H^{-1}(D)$. We denote by $C_P$ the Poincar\'{e} constant
so that $\|v\|_{L^2(D)}\leq C_P \|v\|_{H^1_0(D)},$ $\forall v\in  H^1_0(D)$.
For the sake of brevity, we will denote $H^1_0(D)$ and $H^{-1}(D)$ by $\Y$ and $\Y^\star$.
Given an integer $N\in \mathbb{N}$ and an Hilbert space $V$, we denote by $\underline{V}:=\prod_{i=1}^N V$ the Cartesian product of $N$ copies of $V$, with the standard scalar product $(\uu,\vu)_{\underline{V}}=\sum_{i=1}^N (u_i,v_i)_{V}$.
Given a Banach space $U$, the duality pairing between $U$ and $U^\star$ is denoted by $\langle\cdot,\cdot\rangle$. The specific choice of $U$ will be clear from the context. Further, let $L^p(\Omega,\mathcal{F},\mathbb{P};V)$ be the Bochner space
\[ L^p(\Omega,\mathcal{F},\mathbb{P};V):=\left\{ v:\Omega\rightarrow V, \text{ $v$ strongly measurable}, \int_\Omega \|v(\cdot,\w)\|^p_{V} d\mathbb{P}(\omega) < +\infty\right\},\]
henceforth noted $L^p(\Omega,V)$, and equipped with the norm $\|v\|_{L^p(\Omega,V)}:=(\int_\Omega \|v(\cdot,\w)\|^p_{V} d\PP(\w))^\frac{1}{p}$.
For a Hilbert space $V$, $L^2(\Omega,V)$ is a Hilbert space as well, equipped with the scalar product
$(u,v)_{L^2(\Omega,V)}:=\int_{\Omega} (u(\cdot,\w),v(\cdot,\w))_V d\mathbb{P}(\w)$.
To stress better the dependence of function-valued random variables on an elementary random event $\omega$, we will use the notation $v_\w=v(\cdot,\w)$ for almost every (a.e.) $\w \in \Omega$. The expectation operator $\E : L^1(\Omega)\rightarrow \setR$ is defined as
\[\E\LQ X\RQ= \int_\Omega X(\w) d\PP(\w),\quad \forall X\in L^1(\Omega).\]
For $X \in L^2(\Omega)$, the variance $\Var: L^2(\Omega)\rightarrow \setR^+$ and standard deviation $\SD:L^2(\Omega)\rightarrow \setR^+$ are defined as 
\[\Var \LQ X\RQ:=\E\LQ (X-\E \LQ X \RQ)^2 \RQ=  \int_\Omega (X-\E \LQ X \RQ)^2 d\mathbb{P}(\omega),\quad\text{and}\quad \SD\LQ X \RQ:=\sqrt{\Var \LQ X \RQ}.\]
We will use repeatedly the Woodbury identity, that is
\[(A+UCV)^{-1}=A^{-1}-A^{-1}U(C^{-1}+VA^{-1}U)^{-1}VA^{-1},\]
where $A\in \setR^{n\times n}$, $C\in \setR^{r\times r}$, $U\in \setR^{n\times r}$, $V\in \setR^{r\times n}$, with $A$ and $C$ invertible. 
Finally, the spectrum of a matrix $H$ is denoted with $\sigma(H)$.

\section{Problem setting}\label{sec:problemsetting}
Our physical model is described by the elliptic random Partial Differential Equation (PDE)
\begin{equation}\label{eq:state}
\begin{aligned}
-\text{div}(\df(x,\w)\nabla y(x,\w))&= \phi(x),\quad &x\in D,\text{ } \w \in\Omega,\\
y(x,\w)&=0,\quad & x\in \partial D,\text{ }\w \in\Omega,
\end{aligned}
\end{equation}
where $\phi(x)$ is a deterministic force term and $\w$ is an elementary random event.

We do not require $\df(x,\w)$ to be uniformly bounded in $\w$, but we make the following assumption.
\begin{assumption}[On the random diffusion field]\label{ass:diff}
The random diffusion field $\df$ has almost surely (a.s.) continuous and positive realizations and the map $\w\mapsto \df(\cdot,\w)\in C^0(\overline{D})$ is measurable. Thus, the random variables $\df_{\min}(\w):=\min_{x\in \overline{D}} \df(x,\w)$ and $\df_{\max}(\w):=\max_{x \in \overline{D}} \df(x,\w)$ are well-defined. Further, there exists a $p\in [1,\infty]$ such that both $\df_{\max}(\w)$ and $\frac{1}{\df_{\min}(\w)}$ are in $L^p(\Omega)$.
\end{assumption}

These assumptions are clearly verified with $p=\infty$ by a continous random field which is uniformly bounded, i.e. if there exists $K_1$, $K_2\in\mathbb{R}^+$ such that
\begin{equation*}\label{eq:boundedfield}
K_1\leq \df(x,\w)\leq K_2,\forall x\in D, \text{ a.e. } \w \in \Omega.
\end{equation*}
Another instance is the log-normal random field $\df(x,\w)=\exp(g(x,\w))$, where $g(x,\w)$ is a Gaussian field with covariance function $\text{cov}[g](x,y):=k(\|x-y\|)$, and $k(\cdot)$ is a Lipschitz function. Both $\df_{\min}(\w)$ and $\df_{\max}(\w)$ are in $L^p(\Omega)$ for every $p\in [1,\infty)$ \cite{Charrier}.

For a.e. $\w \in \Omega$, $a_\w (\cdot,\cdot):Y\times Y\rightarrow \setR$, $a_\w(u,v):=\int_{D} \df(x,\w)\nabla u(x) \nabla v(x) dx$ is a symmetric, continuous and coercive bilinear form, but not necessarily uniformly in $\omega$ due to Assumption \ref{ass:diff}. It holds
\begin{equation}\label{eq:coerv_conti}
\df_{\min}(\w)\|u\|^2_{\Y}\leq a_\w(u,u)\leq \df_{\max}(\w) \|u\|^2_{\Y},
\end{equation}
and the operator $\mathcal{A}_\w:\Y\rightarrow \Y^\star$ is defined as $\langle\mathcal{A}_\w u,v\rangle:=a_\w(u,v)$. 

The weak formulation of \eqref{eq:state} on $Y$ for a.e. $\omega\in \Omega$ is 
\begin{equation}\label{eq:state_weak}
\text{find }y_\w \in \Y \text{ s.t. } a_\w(y_\w,v)=\langle \phi,v\rangle, \quad  \text{for every }v\in Y, \text{for a.e. }\w\text{ in }\Omega.
\end{equation}
Due to Assumption \ref{ass:diff}, the following classical result holds \cite{Charrier,Scheichl,lord_powell_shardlow_2014}.
\begin{lemma}\label{lemma:wellpose}
Problem \eqref{eq:state_weak} has a unique solution $y_\w$ for a.e. $\w \in \Omega$. Further, 
\begin{equation*}
\begin{aligned}
\|y_\w\|_{\Y} &\leq \frac{\|\phi\|_{\Y^\star}}{\df_{\min}(\w)},\quad \text{for a.e. } \w \in \Omega,\\  
  \|y\|_{L^p(\Omega,\Y)}&\leq \|\phi\|_{\Y^\star}\left\|\frac{1}{\df_{\min}}\right\|_{L^p(\Omega)}.
\end{aligned}
\end{equation*}
\end{lemma}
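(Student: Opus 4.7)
The proof has three parts: pathwise existence and uniqueness, the pathwise bound, and the $L^p$ bound together with measurability.

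\textbf{Pathwise existence and uniqueness.} Fix $\omega \in \Omega$ outside a null set where $\kappa(\cdot,\omega)$ fails to be continuous and positive. By Assumption \ref{ass:diff} we have $\kappa_{\min}(\omega) > 0$ and $\kappa_{\max}(\omega) < \infty$, and estimate \eqref{eq:coerv_conti} says that $a_\omega$ is coercive with constant $\kappa_{\min}(\omega)$ and continuous with constant $\kappa_{\max}(\omega)$ on $Y \times Y$. The Lax--Milgram lemma therefore yields a unique $y_\omega \in Y$ solving \eqref{eq:state_weak}.

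\textbf{Pathwise a priori bound.} Testing \eqref{eq:state_weak} with $v = y_\omega$ and using coercivity and the definition of the dual norm,
\begin{equation*}
\kappa_{\min}(\omega) \|y_\omega\|_Y^2 \le a_\omega(y_\omega,y_\omega) = \langle \phi, y_\omega\rangle \le \|\phi\|_{Y^\star}\|y_\omega\|_Y,
\end{equation*}
so dividing by $\kappa_{\min}(\omega)\|y_\omega\|_Y$ (if $y_\omega \neq 0$; the bound is trivial otherwise) gives the stated pointwise estimate.

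\textbf{Measurability and $L^p$ bound.} To make sense of the Bochner norm one needs strong measurability of $\omega \mapsto y_\omega$ as a $Y$-valued map. The assumption that $\omega\mapsto\kappa(\cdot,\omega)\in C^0(\overline D)$ is measurable, together with the fact that the Lax--Milgram solution map is continuous with respect to the coefficient (in $C^0(\overline D)$) on the open set $\{\kappa : \kappa_{\min}>0\}$, shows that $\omega\mapsto y_\omega$ is the composition of a measurable map with a continuous one and is therefore measurable; I would just cite this standard argument rather than prove it from scratch. Once measurability is secured, the $L^p$ bound is obtained by raising the pointwise bound to the $p$-th power, integrating against $\mathbb{P}$ and taking the $p$-th root:
\begin{equation*}
\|y\|_{L^p(\Omega,Y)} = \left(\int_\Omega \|y_\omega\|_Y^p\, d\mathbb{P}(\omega)\right)^{1/p} \le \|\phi\|_{Y^\star}\left(\int_\Omega \frac{1}{\kappa_{\min}(\omega)^p}\, d\mathbb{P}(\omega)\right)^{1/p} = \|\phi\|_{Y^\star}\left\|\frac{1}{\kappa_{\min}}\right\|_{L^p(\Omega)},
\end{equation*}
where finiteness of the right-hand side is exactly the hypothesis $1/\kappa_{\min}\in L^p(\Omega)$ from Assumption \ref{ass:diff}. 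The case $p=\infty$ is identical with the essential-supremum replacing the integral.

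The only nontrivial step is the measurability argument, since existence, uniqueness and the a priori bounds are straightforward Lax--Milgram with duality. In fact, everything else in the lemma is pathwise deterministic; the probabilistic content is confined to integrating the deterministic inequality, which is why the hypothesis on $1/\kappa_{\min}$ appears so cleanly in the final estimate.
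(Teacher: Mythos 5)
Your proof is correct and is essentially the argument the paper relies on: the paper does not prove this lemma itself but cites it as a classical result from the literature (Charrier; Scheichl et al.; Lord--Powell--Shardlow), and the standard proof in those references is exactly your pathwise Lax--Milgram argument, the energy estimate obtained by testing with $y_\w$, and integration of the pointwise bound, with measurability of $\w \mapsto y_\w$ handled by continuity of the solution map with respect to the coefficient (plus separability of $Y$ to upgrade to strong measurability). Nothing further is needed.
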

Integrating \eqref{eq:state_weak} with respect to the measure $\PP$, and defining 
the bilinear form 
\begin{equation}\label{eq:definition_bilinear_form}
a(u,v):= \int_\Omega \int_D \df(x,\w)\nabla u(x,\w) \nabla v(x,\w) dx d\PP(\w)= \E \LQ \langle \mathcal{A}_\w u_\w,v_\w\rangle \RQ ,
\end{equation}
the energy space $\ES:=\left\{v:\Omega \rightarrow Y: \mathcal{F}\slash\mathcal{B}(Y)\text{-measurable }, \|v\|_\mathcal{A}^2:=a(v,v) <\infty\right\}$, and the functional $\Phi(v):=\int_\Omega \langle \phi,v_\w\rangle d\PP(\w)=\E\LQ \langle \phi,v_\w\rangle \RQ$, one can consider the global weak formulation in both physical and probability spaces,
\begin{equation}\label{eq:state_weak_prob_space}
\text{find } y\in \ES \text{ s.t. } a(u,v)=\Phi(v),\quad \forall v\in \ES.
\end{equation}
We further introduce the operator $\mathcal{A}:\ES\rightarrow \ES^\star,$ $\langle \mathcal{A}u,v\rangle:=a(u,v)=\E \LQ \langle \mathcal{A}_\w u_\w,v_\w\rangle \RQ$. 
\begin{remark}
If $\df\notin L^\infty(\Omega,L^\infty(D))$, the weak formulation \eqref{eq:state_weak_prob_space} may not be well-defined over the Bochner space $L^2(\Omega,Y)$, and the energy norm is not equivalent to any standard Bochner norm. 
A sophisticated framework has been developed in \cite{gittelson,schwab} to derive a well-posed weak formulation for the log-normal model, obtained integrating \eqref{eq:state_weak} with respect to an auxiliary Gaussian measure $\widetilde{\PP}$. This setting permits to bound from above and below the energy norm with two different Bochner-norms, and it is used to study the convergence rate of polynomial approximations.
However, we will not need the framework of \cite{gittelson,schwab} in our analysis. For our purposes it is sufficient to show that \eqref{eq:state_weak_prob_space} has a unique solution and, using the special form of $\phi$, being a deterministic functional, we can show that the solution of \eqref{eq:state_weak_prob_space} has actually $L^p(\Omega,Y)$ regularity, which is sufficient for the well-posedness of the OCP ( if $p\geq 2$, see Lemma \ref{lemma:wellposednessOCP}).
We refer to Remark \ref{remark:OP2} in Section \ref{sec:OP} for a further discussion on the weak formulation \eqref{eq:state_weak_prob_space} and on \cite{gittelson,schwab} in the context of deriving preconditioners at the continuous level. These observations are formalized in Lemma \ref{Lemma:weak_formulation}.
\end{remark}
\begin{lemma}\label{Lemma:weak_formulation}
The solution of \eqref{eq:state_weak}, interpreted as the representative element of the equivalence class of functions coinciding  $\PP$-a.s. with it, is the unique solution of the linear variational problem \eqref{eq:state_weak_prob_space} and lies in $L^p(\Omega,Y)$.
\end{lemma}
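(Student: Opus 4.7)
The plan is to verify in sequence the following: the candidate $\omega \mapsto y_\omega$ furnished pointwise by Lemma \ref{lemma:wellpose} is $\mathcal{F}/\mathcal{B}(\Y)$-measurable; it lies in $L^p(\Omega,\Y)$; it belongs to the energy space $\ES$; it satisfies the global identity \eqref{eq:state_weak_prob_space}; and any solution of \eqref{eq:state_weak_prob_space} in $\ES$ coincides with it. The $L^p$ integrability claim is then already contained in Lemma \ref{lemma:wellpose}.

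First I would argue strong measurability using that $\Y=H^1_0(D)$ is separable. Assumption \ref{ass:diff} gives that $\w\mapsto \df(\cdot,\w)\in C^0(\overline{D})$ is measurable, so $\w\mapsto \mathcal{A}_\w\in \mathcal{L}(\Y,\Y^\star)$ is measurable. Fix a countable dense family $\{w_n\}_{n\ge 1}\subset \Y$ and let $y_\w^N$ be the Galerkin approximation of $y_\w$ in $\mathrm{span}(w_1,\dots,w_N)$; each $y_\w^N$ is the unique solution of a finite-dimensional linear system whose matrix and right-hand side depend measurably on $\w$, hence it is measurable in $\w$. The uniform bound $\|y_\w - y_\w^N\|_{\Y}\to 0$ a.s. (which holds by Céa's lemma with constants controlled by $\df_{\max}(\w)/\df_{\min}(\w)$) then gives measurability of the pointwise limit $y$. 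Alternatively, since inversion is continuous on the open set of isomorphisms of $\mathcal{L}(\Y,\Y^\star)$, the map $\w\mapsto \mathcal{A}_\w^{-1}\phi = y_\w$ is measurable directly.

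Next I would show $y\in \ES$. Testing \eqref{eq:state_weak} pointwise with $v=y_\w$ gives $a_\w(y_\w,y_\w)=\langle \phi,y_\w\rangle$ a.s., and after taking expectation
\[ \|y\|_{\mathcal{A}}^2 = a(y,y) = \E\LQ \langle \phi,y_\w\rangle\RQ \leq \|\phi\|_{\Y^\star}\,\E\LQ \|y_\w\|_{\Y}\RQ \leq \|\phi\|_{\Y^\star}^2\,\bigl\|1/\df_{\min}\bigr\|_{L^1(\Omega)} < \infty, \]
where the final step uses Lemma \ref{lemma:wellpose} and the inclusion $L^p(\Omega)\subset L^1(\Omega)$ for $p\geq 1$ on a probability space. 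For the global identity, note that \eqref{eq:state_weak} holds for every $v\in \Y$ simultaneously outside a single $\PP$-null set; therefore for any $v\in \ES$ we may insert $v_\w$ pointwise a.s. to obtain $a_\w(y_\w,v_\w)=\langle \phi,v_\w\rangle$ a.s. The Cauchy--Schwarz inequality for the positive bilinear form $a_\w$ gives $|a_\w(y_\w,v_\w)| \le a_\w(y_\w,y_\w)^{1/2}a_\w(v_\w,v_\w)^{1/2}$, hence integrability, and taking expectations yields $a(y,v)=\Phi(v)$.

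Uniqueness follows from pointwise coercivity: if $y_1,y_2\in \ES$ both solve \eqref{eq:state_weak_prob_space}, subtracting and testing with $v=y_1-y_2$ yields $a(y_1-y_2,y_1-y_2)=0$, and by \eqref{eq:coerv_conti}
\[ 0 = a(y_1-y_2,y_1-y_2) \geq \E\LQ \df_{\min}(\w)\,\|y_{1,\w}-y_{2,\w}\|_{\Y}^2\RQ, \]
so since $\df_{\min}(\w)>0$ a.s., $y_{1,\w}=y_{2,\w}$ in $\Y$ for a.e. $\w$. The only delicate step is the strong measurability of $\w\mapsto y_\w$; the remaining items are rather direct consequences of the pointwise equation, Assumption \ref{ass:diff} and Lemma \ref{lemma:wellpose}.
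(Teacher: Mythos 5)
Your proof is correct, but it follows a genuinely different route from the paper's. The paper argues top-down: it invokes the fact that the energy space $\ES$ is a Hilbert space (citing Proposition 3.6 of Gittelson's work), obtains existence and uniqueness of the solution of \eqref{eq:state_weak_prob_space} abstractly from Riesz's theorem after checking that $\Phi\in\ES^\star$ (via the same bound $|\Phi(v)|\le \|\phi\|_{\Y^\star}\sqrt{\E\LQ 1/\df_{\min}\RQ}\,\|v\|_{\mathcal{A}}$ that you use implicitly), and then cites Gittelson's Corollary 3.8 for the identification of this abstract solution with the $\PP$-a.s. pointwise solution of \eqref{eq:state_weak}; the $L^p$ regularity is taken from Lemma \ref{lemma:wellpose}, as you do. You instead argue bottom-up and constructively: you verify that the pointwise solution is strongly measurable (via Galerkin approximations with C\'ea's lemma, or via continuity of operator inversion), that it lies in $\ES$ by testing with $y_\w$ and integrating, that it satisfies the global identity by inserting $v_\w$ and using Cauchy--Schwarz for $a_\w$ to justify integrability, and that it is the unique solution by pointwise coercivity and $\df_{\min}(\w)>0$ a.s. What your approach buys is self-containedness: you never need the completeness of $\ES$ (Riesz's theorem is avoided entirely, since existence is by construction and uniqueness by coercivity), you bypass both citations to the Gittelson framework --- which is in the spirit of the paper's stated goal of not relying on that machinery --- and the coincidence of the global solution with the pointwise one comes for free rather than as a quoted result. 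What the paper's approach buys is brevity, at the cost of outsourcing the measurability and identification issues (which are the genuinely delicate points, as you note) to external results.
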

\begin{proof}
Since the energy space $\ES$ is a Hilbert space, \cite[Proposition 3.6]{gittelson}, the existence and uniqueness of the solution of \eqref{eq:state_weak_prob_space} follows from Riesz's theorem if $\Phi\in \ES^\star$. Due to the specific form of $\Phi$, this is easily verified since for any $\phi \in Y^\star$,
\[|\Phi(v)|=\left|\int_{\Omega} \langle \phi,v(\cdot,\w)\rangle d\PP(\w)\right| \leq \|\phi\|_{Y^\star}\int_{\Omega}\|v(\cdot,
w)\|_Y d\PP(\w) \leq \|\phi\|_{Y^\star}\sqrt{\E \LQ \frac{1}{\df_{\min}(\w)}\RQ}\|v\|_\mathcal{A}.\]
Further, Corollary 3.8 in \cite{gittelson} shows that the solution of \eqref{eq:state_weak} coincides $\PP$-a.e with the unique solution of \eqref{eq:state_weak_prob_space}. Finally, using Lemma \ref{lemma:wellpose}, we obtain the desired regularity. 
\end{proof}

In this manuscript, we are interested in solving OCPs constrained by the state equation \eqref{eq:state_weak_prob_space}.
We will suppose that the deterministic force term $\phi$ can be decomposed in a given deterministic part called $f$, and a deterministic control $\widetilde{u}$. We suppose that $\widetilde{u}$ lies in the dual of an Hilbert space $U$, which will be either $L^2(D)$ or $Y$. In both cases, we will use the Riesz operator $\Lambda_U :U\rightarrow U^\star \subset \Y^\star$, such that $\widetilde{u}=\Lambda_U u$ and $\langle \widetilde{u},v\rangle =(u,v)_U,\forall v\in U$
The quantity we aim to compute is the Riesz representative $u$.

We focus on the quadratic objective functional\footnote{We tacitly omit the continuous embedding operator from $L^2(\Omega,Y)$ to $L^2(\Omega,L^2(D))$.}
\begin{equation*}
\begin{split}
J(u)&=\frac{1}{2}\E\LQ \|y_\w-y_d\|^2_{L^2(D)}\RQ +\frac{\gamma}{2}\|\SD\LQ y_\w \RQ \|^2_{L^2(D)}+\frac{\beta}{2}\|u\|^2_{U}\\
&=\frac{1}{2}\left(y-y_d,y-y_d\right)_{L^2(\Omega,L^2(D))} +\frac{\gamma}{2}\left(y-\mathbb{E}\LQ y_\w\RQ,y-\mathbb{E}\LQ y_\w \RQ\right)_{L^2(\Omega,L^2(D))} + \frac{\beta}{2}(u,u)_U,
\end{split}
\end{equation*}
where $\gamma\geq 0$, $\beta >0$, and $y_d\in L^2(D)$ is a deterministic target state. Exchanging the order of integration, it holds $\|\SD \LQ y_\w\RQ \|^2_{L^2}=\E\LQ \|y_\w-\E\LQ y_\w\RQ\|^2_{L^2}\RQ$.
The whole OCP can be formulated as
\begin{equation}\label{eq:OCP}
\begin{cases}
\min_{u\in U} J(u)=\frac{1}{2}\E\LQ \|y_\w(u)-y_d\|^2_{L^2}\RQ +\frac{\gamma}{2}\|\SD\LQ y_\w(u)\RQ \|^2_{L^2}+\frac{\beta}{2}\|u\|^2_{U},\\
\text{where } y_\w(u)\in \ES \text{ solves   }\\
\E\LQ \langle \mathcal{A}_\w y_\w(u),v_\w\rangle \RQ=\E \LQ  \langle f +\Lambda_U u,v_\w\rangle\RQ,\quad \forall v\in \ES.
\end{cases}
\end{equation}
We emphasize the dependence of $y$ on the control $u$ through the notation $y(u)$.

\begin{lemma}[Well posedness of the OCP]\label{lemma:wellposednessOCP}
If Assumption \ref{ass:diff} holds with $p\geq 2$, then the OCP \eqref{eq:OCP} admits a unique solution $u^*\in U$.
\end{lemma}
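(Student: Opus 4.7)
The plan is to reduce the OCP to an unconstrained minimization over $U$ via the control-to-state map, check that the reduced functional is well-defined, continuous, strictly convex and coercive on the Hilbert space $U$, and then invoke the direct method. The hypothesis $p \geq 2$ will enter exactly to ensure the quadratic observations of $y_\w(u)$ make sense as finite numbers.

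First I would introduce the solution map. By linearity of \eqref{eq:state_weak_prob_space}, write $y_\w(u) = y^0_\w + (Su)_\w$, where $y^0 \in \ES$ solves the state equation with source $f$ and $S : U \to \ES$ is the bounded linear operator sending $u$ to the solution of \eqref{eq:state_weak_prob_space} with source $\Lambda_U u$. By Lemma \ref{Lemma:weak_formulation}, both $y^0$ and $Su$ lie in $L^p(\Omega,Y)$, with, using Lemma \ref{lemma:wellpose},
\begin{equation*}
\|y^0\|_{L^p(\Omega,Y)} \leq \|f\|_{Y^\star} \Bigl\|\tfrac{1}{\df_{\min}}\Bigr\|_{L^p(\Omega)}, \qquad \|Su\|_{L^p(\Omega,Y)} \leq \|\Lambda_U\|\,\|u\|_U\, \Bigl\|\tfrac{1}{\df_{\min}}\Bigr\|_{L^p(\Omega)}.
\end{equation*}
Since $p\ge 2$ by assumption, $L^p(\Omega,Y)\hookrightarrow L^2(\Omega,Y)\hookrightarrow L^2(\Omega,L^2(D))$ continuously (via Poincar\'e in the last embedding), so $y(u)\in L^2(\Omega,L^2(D))$ for every $u\in U$ and $S:U\to L^2(\Omega,L^2(D))$ is bounded.

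Next I would analyse the reduced functional $\widetilde{J}(u):=J(y(u))$. Using the decomposition $y(u)=y^0+Su$ and expanding the three quadratic terms, $\widetilde{J}$ takes the form
\begin{equation*}
\widetilde{J}(u) = \tfrac{1}{2}\bigl(Q u,u\bigr)_U + (\ell, u)_U + c,
\end{equation*}
where $c\in\setR$ collects terms independent of $u$, $\ell\in U$ is determined by $y^0$ and $y_d$, and $Q:U\to U$ is bounded, self-adjoint and nonnegative since it arises from the two symmetric positive semidefinite quadratic forms $u\mapsto \|Su\|^2_{L^2(\Omega,L^2(D))}$ and $u\mapsto \gamma\,\|Su-\E[Su]\|^2_{L^2(\Omega,L^2(D))}$. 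Adding the control penalty $\tfrac{\beta}{2}\|u\|_U^2$ with $\beta>0$ gives
\begin{equation*}
\widetilde{J}(u) \geq \tfrac{\beta}{2}\|u\|^2_U + (\ell, u)_U + c,
\end{equation*}
so $\widetilde{J}$ is coercive on $U$ and strictly convex (the Hessian $Q+\beta I$ is bounded below by $\beta I$). Continuity is immediate from the boundedness of $Q$, $S$ and the linear term.

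Finally I would conclude by the direct method. Let $(u_n)\subset U$ be a minimizing sequence. By coercivity it is bounded in the Hilbert space $U$, hence admits a weakly converging subsequence $u_{n_k}\rightharpoonup u^*$. Since $\widetilde{J}$ is convex and continuous, it is weakly lower semicontinuous, giving $\widetilde{J}(u^*)\leq \liminf_k \widetilde{J}(u_{n_k})=\inf_{u\in U}\widetilde{J}(u)$, so $u^*$ is a minimizer. Strict convexity of $\widetilde{J}$, guaranteed by $\beta>0$, forces uniqueness of $u^*$. The sole delicate point, and the main obstacle, is the verification that the first two terms of $J$ are finite for every $u\in U$ despite $\df$ not being uniformly bounded; this is precisely what Lemma \ref{Lemma:weak_formulation} together with the hypothesis $p\ge 2$ deliver through the embedding $L^p(\Omega,Y)\hookrightarrow L^2(\Omega,L^2(D))$.
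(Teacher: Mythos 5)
Your proof is correct and takes essentially the same approach as the paper's: both decompose the state as $y(u)=y(0)+Su$ with $S$ linear and bounded (relying on Lemma \ref{Lemma:weak_formulation} and $p\geq 2$ to make the quadratic observation terms finite), write $J$ as a continuous, coercive, strictly convex quadratic functional on $U$ thanks to the $\beta$-term, and conclude existence and uniqueness from weak lower semicontinuity. The only cosmetic difference is that the paper invokes the classical theory of Lions at the end, whereas you carry out the direct method (minimizing sequence, weak compactness, weak lsc) explicitly.
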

\begin{proof}
Due to Lemma \ref{Lemma:weak_formulation} and the hypothesis $p\geq 2$, the solution $y(u)$ to the state equation is in $L^2(\Omega,Y)$ for any $u\in U$, thus the objective functional is well-defined.
The control-to-state map $\widetilde{S}:U\rightarrow L^p(\Omega,\Y)$ defined by \eqref{eq:state_weak_prob_space} (with $\phi=f+u$) as $\widetilde{S}: u\mapsto y(u)$ is affine in $u$, while $S:U\rightarrow L^p(\Omega,Y)$, $S:u\mapsto y(u)-y(0)$ is linear and bounded with operator norm $\|S\|_{\mathcal{L}(U,L^p(\Omega,Y))}=:C_{S,p}<\infty$. 
The objective functional can be written as $J(u)=\frac{1}{2}\pi(u,u) -L(u) +C$, where
\begin{equation*}\label{eq:bilinearforms_OCP}
\begin{aligned}
\pi(u,v)&:=\left(S(u),S(v)\right)_{L^2(\Omega,L^2(D))} +\gamma\left(S(u)-\mathbb{E}\LQ S(u)\RQ,S(v)-\mathbb{E}\LQ S(v) \RQ\right)_{L^2(\Omega,L^2(D))} + \frac{\beta}{2}(u,v)_U,\\
L(v)&:=(y_d-y(0),S(v))_{L^2(\Omega,L^2(D))}+\gamma (\E\LQ y_\w(0)\RQ-y(0),S(v)-\E\LQ S(v)\RQ)_{L^2(\Omega,L^2(D))},\\
C&:=\|y(0)-y_d\|^2_{L^2(\Omega,L^2(D))}+\gamma \|y(0)-\E\LQ y_\w(0)\RQ\|^2_{L^2(\Omega,L^2(D))}.
\end{aligned}
\end{equation*}
$\pi(\cdot,\cdot)$ is clearly coercive, as $\pi(u,u)\geq \frac{\beta}{2}\|u\|^2_U$. For the continuity we observe that
\[(S(u),\E\LQ S(u)\RQ)_{L^2(\Omega,L^2(D))}=\int_{\Omega}\int_{D} S(u)\E\LQ S(u)\RQ dx d\mathbb{P}(\w)=\int_{D}\E\LQ S(u)\RQ^2=\|\E\LQ S(u)\RQ\|^{2}_{L^2(D)},\]
so that $(S(u)-\E\LQ S(u)\RQ,S(u)-\E\LQ S(u)\RQ)_{L^2(\Omega,L^2(D))}\leq (S(u),S(u))_{L^2(\Omega,L^2(D))} \leq C_P^2 \|S(u)\|^2_{L^2(\Omega,Y)}$, where $C_P$ is the Poincar\'{e} constant. Thus, $\pi(u,u)\leq \left((1+\gamma) C_P^2 C_{S,2}^2 +\frac{\beta}{2}\right)\|u\|^2_U$. Similarly it can be shown that $L(\cdot)$ is continuous.
As $\pi(\cdot,\cdot)$ is continuous and coercive and $L(\cdot)$ continuous, $J(u)$ is strictly convex and weakly lower semicontinuous. Thus, having verified the hypothesis of the classical theory of Lions \cite{lions1971optimal}, the OCP admits a unique solution $u_*\in U$, see also, e.g. \cite{troltzsch2010optimal,martinez2018optimal,Ciarlet}.
\end{proof}

To derive the optimality conditions, we rely on a optimize-then-discretize paradigm and a Lagragian approach, see e.g. \cite{hinze2008optimization}. For the sake of brevity, we omit the calculations of the directional derivatives evaluated in $(y,u,p)$, where $p\in L^2(\Omega,Y)$ is the adjoint variable, along the directions $\delta y, \delta p$, and $\delta u$. We refer the interested reader to \cite[Section 4]{van2019robust} and \cite{ReporUQ2}. The optimality system reads
\begin{equation}\label{eq:optimality_system}
\begin{array}{r l r r}
\E\LQ \langle \mathcal{A}_\w p_\w,v_\w\rangle \RQ + \E\LQ \langle \Lambda_{L^2}\right(y_\w+\gamma (y_\w-\E\LQ y_\w\RQ)\left),v_\w\rangle \RQ &= \E\LQ \langle \Lambda_{L^2} y_d,v_\w\rangle \RQ,\quad &\forall v\in \ES,&\\
\langle \beta \Lambda_U u- \Lambda_U \E\LQ  p_\w\RQ,v\rangle &=0,\quad &\forall v\in U,&\\
\E\LQ \langle \mathcal{A}_\w y_\w,v_\w\rangle\RQ -\E\LQ \langle \Lambda_U u,v_\w\rangle \RQ & = \E\LQ \langle f,v_\w\rangle\RQ,\quad &\forall v \in \ES.&
\end{array}
\end{equation}

\section{Discretization}\label{sec:discretization}
\subsection{Discretization in probability}
To numerically approximate the solution of \eqref{eq:OCP}, we rely on a Sample Average Approximation (SAA) \cite{shapiro2014lectures}. We replace the exact continuous expectation operator $\E\LQ\cdot\RQ$ with a suitable quadrature formula $\EQ \LQ\cdot \RQ$ with $N$ nodes. More specifically, given a random variable $X\in L^2(\Omega)$ we approximate,
\begin{align*}
\E\LQ X(\w)\RQ&=\int_\Omega X(\w)d\PP(\w)\approx \sum_{i=1}^N \zeta_i X(\w_i)=: \EQ \LQ X(\w) \RQ,\\
\SD\LQ X(\w) \RQ&=\sqrt{\E\LQ (X(\w)-\E \LQ X(\w)\RQ )^2\RQ}\approx\sqrt{\EQ\LQ (X(\w)-\EQ\LQ X(\w)\RQ )^2\RQ}=:\SDQ\LQ X(\w)\RQ,
\end{align*}
where $\zeta_i$ and $\w_i$ are, respectively, the weights and nodes of the quadrature formula with $\sum_{j=1}^N \zeta_j=1$.
We restrict ourselves to quadrature formulae with positive weights, thus excluding sparse grids and Multilevel Monte Carlo approximations, since the presence of negative weights may compromize the convexity of the OCP.
Among the quadrature formulae which satisfy the constraint of positive weights, we recall standard Monte Carlo, Quasi-Monte Carlo and Gaussian formulae. The construction of Gaussian quadrature formulae requires that the probability space can be parametrized by a sequence (finite or countable) of indipendent random variables $\left\{\xi_j\right\}_j$, each with distribution $\mu_j$, and the existence of a complete basis of tensorized $L^2_{\mu_j}$-orthonormal polynomials.
Once the probability space has been discretized, the vectors 
\[y(x)=(y_{\w_1}(x),\dots,y_{\w_N}(x))^\top\in \Yu \text{ and }p(x)= (p_{\w_1}(x),\dots,p_{\w_N}(x))^\top\in \Yu,\] contain snapshots of the function-valued random variables $\w\mapsto y(\cdot,\w)$ and $\w\mapsto p(\cdot,\w)$ at the $N$ collocation points.
We now introduce the operator $\widehat{\mathcal{A}}:\Yu\rightarrow \Yu^\star$, which approximates the bilinear form $a(\cdot,\cdot)$ in \eqref{eq:definition_bilinear_form},
\begin{equation}\label{eq:definitionA}
\langle \widehat{\mathcal{A}}\uu,\vu\rangle =\sum_{i=1}^N \zeta_i\langle \mathcal{A}_{\w_i} u_{\w_i},v_{\w_i}\rangle =\EQ\LQ\langle \mathcal{A}_\w \uu,\vu\rangle\RQ,\quad \forall \uu=(u_{\w_1},\dots,u_{\w_N}),\vu=(v_{\w_1},\dots,v_{\w_N})\in \Yu,
\end{equation}
the constant extension operator $\mathcal{I}: \Y^\star \rightarrow \Yu^\star$ such that $\mathcal{I} f=(f,\dots,f)^\top$ $\forall f \in \Y^\star$, and its adjoint $\mathcal{I}^\prime:\Yu\rightarrow \Y$ as $\mathcal{I}^\prime \vu=\sum_{i=1} v_{\w_i}$ $\forall \vu \in \Yu$, so that $\langle \mathcal{I}f,\vu\rangle_{\Yu^\star,\Yu}=\langle f, \mathcal{I}^\prime \vu\rangle_{\Y^\star,\Y}$.
Note that $\EQ\LQ y_\w\RQ=\sum_{i=1}^N \zeta_i y_{\w_i}= \mathcal{I}^\prime \mathcal{Z} \underline{y}$, where $\mathcal{Z}=\text{diag}(\zeta_1 I,\dots,\zeta_N I)$ is a diagonal matrix containing the quadrature weights. Finally, the operator $\underline{\Lambda}_{L^2}$ is defined as $\underline{\Lambda}_{L^2}\vu=(\Lambda_{L^2}v_{\w_1},\dots,\Lambda_{L^2}v_{\w_N})^\top$.
The semi-discrete matrix formulation of \eqref{eq:optimality_system}\footnote{The same semi-discrete optimality system can be derived using a discrete-then-optimize paradigm in probability, that is by replacing $\E\LQ\cdot\RQ$ with $\EQ\LQ\cdot\RQ$ into \eqref{eq:OCP}, and then by calculating the directional derivatives.}, written as an equality in dual spaces, is
\begin{equation}\label{eq:KKT_matrix}
\begin{pmatrix}
\underline{\Lambda}_{L^2}\left((1+\gamma)\mathcal{Z}-\gamma\mathcal{Z}\mathcal{I}\mathcal{I}^\prime \mathcal{Z}\right) & 0 & \widehat{\mathcal{A}}\\
0 & \beta \Lambda_U & -\Lambda_U \mathcal{I}^\prime \mathcal{Z}\\
\widehat{\mathcal{A}} & -\mathcal{Z}\mathcal{I}\Lambda_U & 0
\end{pmatrix}\begin{pmatrix}
\underline{y} \\u \\ \pu
\end{pmatrix}=
\begin{pmatrix}
\mathcal{Z}\mathcal{I} \Lambda_{L^2}y_d \\ 0 \\ \mathcal{Z}\mathcal{I}f
\end{pmatrix}.
\end{equation}
which corresponds to the set of equations
\begin{align}\label{eq:KKT}
 \mathcal{A}_{\w_i} p_{\w_i} + (1+\gamma)\Lambda_{L^2}y_{\w_i}-\gamma \EQ\LQ 
y_\w\RQ &=\Lambda_{L^2} y_d ,& i=1,\dots,N,\nonumber\\
 \beta\Lambda_U u-\Lambda_U\EQ\LQ  p_\w\RQ&=0,&\\
  \mathcal{A}_{\w_i} y_{\w_i}-\Lambda_U u &=f,& i=1,\dots,N.\nonumber
\end{align}

\subsection{Discretization in space}

Let us denote by $\left\{\mathcal{T}_h\right\}_{h>0}$ a family of regular triangulations of $D$. $Y^h$ denotes the space of continuous piece-wise polynomial functions of degree $r$ over $\mathcal{T}_h$ that vanish on $\partial D$, that is $Y^h:=\left\{v_h \in C^0(\overline{D}): v_h|_{K} \in \mathbb{P}_r(K),\quad \forall K\in \mathcal{T}_h,\text{ }y|_{\partial D}=0\right\}\subset \Y$. $N_h$ is the number of degrees of freedom associated to the space $Y^h$. 
We consider a finite element discretization of system \eqref{eq:KKT_matrix}. The vectors $\yb=(\yb_1,\dots,\yb_N)\in \setR^{N\cdot N_h}$ and $\pb=(\pb_1,\dots,\pb_N)\in \setR^{N\cdot N_h}$ are the discretization of the vector functions $\underline{y}$ and $\pu$. To discretize the control $u$, we use the same finite element space $Y^h$, see \cite[Remark 3.1]{martin2018analysis}. Further, the matrices $A_{\w_i}\in \setR^{N_h\times N_h}$ are the stiffness matrices corresponding to the elliptic operators $\mathcal{A}_{\w_i}$, and $A_0:=\sum_{i=1}^N \zeta_i A_{\w_i}$ is the empirical mean. $M_s\in \setR^{N_h\times N_h}$ is the standard mass matrix. The identity matrices are $I_s\in \setR^{N_h\times N_h}$ and $I \in \setR^{N\cdot N_h\times N\cdot N_h}$.
According to the choice of the control space, that is $U=L^2(D)$ or $U=Y^\star$, the representation of the Riesz operator $\Lambda_U$ is either $\Lambda_U=M_s$ or $\Lambda_U=K$, where $K$ is the stiffness matrix associated to the standard scalar product in $\Y$. In the following, we will suppose the control $u$ lies in $L^2(D)$, i.e. $U=L^2(D)$.

At the fully discrete level, system \eqref{eq:KKT_matrix} reads $\mathscr{S}\mathbf{x}=\mathbf{b}$,
\begin{equation}\label{eq:KKT_matrix_sym_disc}
\mathscr{S}=\begin{pmatrix}
M\left((1+\gamma)Z-\gamma Z\unob\unob^\top Z\right) & 0 & A\\
0 & \beta M_s & -M_s \unob^\top Z\\
A & -Z\unob M_s & 0
\end{pmatrix},\quad 
\mathbf{x}=\begin{pmatrix}
\yb \\ \mathbf{u} \\ \pb
\end{pmatrix},\quad 
\mathbf{b}=
\begin{pmatrix}
Z\unob M_s \yb_d \\ 0 \\ Z\unob\mathbf{f}
\end{pmatrix},
\end{equation}
where $A\in \setR^{N\cdot N_h\times N\cdot N_h}$, $M\in \setR^{N\cdot N_h\times N\cdot N_h}$, $\unob\in \setR^{N\cdot N_h\times N_h}$ are defined as 
\begin{equation*}
\begin{aligned}
A:=\begin{pmatrix}
\zeta_1A_{\w_1}& \\
& \zeta_2A_{\w_2} \\
& & \ddots\\
& & & \zeta_NA_{\w_N}
\end{pmatrix},\quad
 M:=\begin{pmatrix}
M_s \\
& M_s \\
& & \ddots\\
& & & M_s
\end{pmatrix},\quad
\unob:=\begin{pmatrix}
I_s\\ I_s\\ \vdots\\ I_s
\end{pmatrix},
\end{aligned}
\end{equation*}
and $Z=\text{diag}(\zeta_1 I_s,\dots,\zeta_N I_s)\in \setR^{N\cdot N_h,N\cdot N_h}$ is the discretization of $\mathcal{Z}$.
Since $M$ has constant diagonal blocks the following equalities hold true and will be extensively used,
\begin{equation}\label{eq:simplifing_equations}
M\unob\unob^\top=\unob M_s\unob^\top=\unob\unob^\top M\quad \text{and}\quad MZ=ZM.
\end{equation}

Introducing the matrices,
\begin{equation*}\label{eq:definition_C_B}
C:=\begin{pmatrix}
M_{\gamma} & 0 \\
0 & \beta M_s  
\end{pmatrix},\quad \text{and}\quad B:=\begin{pmatrix}
A & -Z\unob M_s
\end{pmatrix},
\end{equation*}
where $M_{\gamma}:=M\left((1+\gamma)Z-\gamma Z	\unob\unob^\top Z\right)$, the discrete version of the saddle point system \eqref{eq:KKT_matrix} is
\begin{equation}\label{eq:saddle_point}
\mathscr{S}=\begin{pmatrix}
C & B^\top\\
B & 0
\end{pmatrix}.
\end{equation}

The matrix $M_\gamma$ plays a key role in the following, thus we discuss its properties in the next Lemma.
\begin{lemma}\label{thm:M_gamma}
The matrix $M_\gamma=M\left((1+\gamma)Z-\gamma Z\unob\unob^\top Z\right)$ is symmetric and positive definite for any $\gamma\geq 0$. Its inverse is equal to
\[M_\gamma^{-1}=\left(M\left((1+\gamma)Z-\gamma Z\unob\unob^\top Z\right)\right)^{-1}=\left(\frac{1}{1+\gamma}Z^{-1}+\frac{\gamma}{1+\gamma}\unob\unob^\top\right)M^{-1}.\]
\end{lemma}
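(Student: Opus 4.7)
The plan is to handle the three claims (symmetry, positive definiteness, and the inverse formula) in sequence, exploiting the block‐diagonal structure of $M$ and $Z$ together with the commutation relations \eqref{eq:simplifing_equations}.

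For symmetry, I would rewrite $M_\gamma=(1+\gamma)MZ-\gamma MZ\unob\unob^\top Z$. The first summand is symmetric because $M$ and $Z$ are both symmetric and commute. For the second, I would compute
\begin{equation*}
(MZ\unob\unob^\top Z)^\top = Z\unob\unob^\top Z M = Z\unob\unob^\top M Z = Z M\unob\unob^\top Z = MZ\unob\unob^\top Z,
\end{equation*}
using $MZ=ZM$ and $\unob\unob^\top M = M\unob\unob^\top$ (a direct consequence of the fact that $M$ has identical diagonal blocks equal to $M_s$). This gives $M_\gamma^\top=M_\gamma$.

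For positive definiteness, I would compute $x^\top M_\gamma x$ blockwise. Writing $x=(x_1,\dots,x_N)^\top$ with $x_i\in\setR^{N_h}$, one checks directly that $x^\top MZ x=\sum_i\zeta_i\|x_i\|_{M_s}^2$ and $x^\top MZ\unob\unob^\top Zx=\bigl\|\sum_i\zeta_ix_i\bigr\|_{M_s}^2$, whence
\begin{equation*}
x^\top M_\gamma x \;=\; (1+\gamma)\sum_{i=1}^N\zeta_i\|x_i\|_{M_s}^2 \;-\; \gamma\Bigl\|\sum_{i=1}^N\zeta_i x_i\Bigr\|_{M_s}^2 .
\end{equation*}
Since $\sum_i\zeta_i=1$ and $\zeta_i>0$, Jensen's inequality for the convex functional $v\mapsto\|v\|_{M_s}^2$ gives $\bigl\|\sum_i\zeta_ix_i\bigr\|_{M_s}^2\le\sum_i\zeta_i\|x_i\|_{M_s}^2$. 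Hence $x^\top M_\gamma x\ge\sum_i\zeta_i\|x_i\|_{M_s}^2>0$ whenever $x\neq0$, proving strict positivity for every $\gamma\ge 0$.

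For the inverse, I would apply the Woodbury identity recalled in Section \ref{sec:notation} to the decomposition $M_\gamma=A+UCV$ with $A:=(1+\gamma)MZ$, $U:=-\gamma MZ\unob$, $C:=I_s$, $V:=\unob^\top Z$, so that $A^{-1}=\frac{1}{1+\gamma}Z^{-1}M^{-1}$. The crucial simplification is the identity $\unob^\top Z\unob=\sum_i\zeta_i I_s=I_s$, which follows from the normalization of the quadrature weights. With it one gets
\begin{equation*}
VA^{-1}U = -\tfrac{\gamma}{1+\gamma}\unob^\top Z\unob = -\tfrac{\gamma}{1+\gamma}I_s,\qquad (C^{-1}+VA^{-1}U)^{-1}=(1+\gamma)I_s,
\end{equation*}
and, after substituting $A^{-1}U=-\frac{\gamma}{1+\gamma}\unob$ and $VA^{-1}=\frac{1}{1+\gamma}\unob^\top M^{-1}$ into the Woodbury formula, the two coefficients collapse exactly to $\tfrac{\gamma}{1+\gamma}\unob\unob^\top M^{-1}$, yielding the announced expression. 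The only real subtlety is spotting the decomposition for Woodbury and recognizing that $\unob^\top Z\unob=I_s$ is what makes everything telescope; a direct a posteriori verification that the product $M_\gamma\cdot M_\gamma^{-1}$ equals $I$ (again using $Z\unob\unob^\top Z\unob\unob^\top = Z\unob\unob^\top$) could be given as a sanity check.
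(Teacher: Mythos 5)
Your proof is correct. Its heart --- the Woodbury computation of the inverse --- is essentially the paper's own argument: the paper applies the Woodbury identity to the scalar factor $(1+\gamma)Z-\gamma Z\unob\unob^\top Z$ and then right-multiplies by $M^{-1}$, whereas you absorb $M$ into the decomposition ($A=(1+\gamma)MZ$, $U=-\gamma MZ\unob$, $V=\unob^\top Z$); both computations collapse thanks to the same normalization $\unob^\top Z\unob=I_s$. Where you take a genuinely different route is in the first two claims. The paper proves symmetry and positive definiteness together via the congruence factorization $M_\gamma=MZ+\gamma\,(I-\unob\unob^\top Z)^\top MZ\,(I-\unob\unob^\top Z)$, obtained from the identity $(I-\unob\unob^\top Z)^\top MZ(I-\unob\unob^\top Z)=MZ-Z\unob M_s\unob^\top Z$: symmetry is then manifest, and positive definiteness follows because $MZ$ is symmetric positive definite (positive weights) while the $\gamma$-term is positive semi-definite as a congruence of $MZ$. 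You instead obtain symmetry by direct transposition using \eqref{eq:simplifing_equations}, and positive definiteness from the blockwise identity $x^\top M_\gamma x=(1+\gamma)\sum_{i}\zeta_i\|x_i\|^2_{M_s}-\gamma\bigl\|\sum_{i}\zeta_i x_i\bigr\|^2_{M_s}$ combined with Jensen's inequality. The two arguments are equivalent in substance --- Jensen's inequality for $\|\cdot\|^2_{M_s}$ with weights $\zeta_i$ is precisely the statement that the paper's congruence term is positive semi-definite --- but yours is more explicit and yields the quantitative bound $x^\top M_\gamma x\ge x^\top MZ\,x$ uniformly in $\gamma\ge 0$, while the paper's factorization is terser and is the form that gets generalized later when the symmetry of the continuous bilinear form $\C$ is asserted.
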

\begin{proof} 
A straightforward calculation shows that
\begin{equation*}\label{eq:symmetry}
\begin{aligned}
&(I-\unob\unob^\top Z)^\top MZ (I-\unob\unob^\top Z)= MZ- MZ\unob\unob^\top Z + Z\unob\unob^\top MZ\unob\unob^\top Z - Z\unob\unob^\top MZ\\
&=MZ -Z M\unob\unob^\top Z + Z\unob M_s\unob^\top Z - Z\unob\unob^\top MZ=MZ -Z\unob M_s\unob^\top Z,
\end{aligned}
\end{equation*}
where we used \eqref{eq:simplifing_equations} and $\unob^\top MZ\unob=\sum_{j=1}^N \zeta_j M_s= M_s$. Hence, $M_\gamma$ is symmetric since it can be written as
\[M_\gamma=MZ +\gamma MZ\left(I-\mathbbm{1}\unob^\top Z\right)=MZ+\gamma (I-\unob\unob^\top Z)^\top MZ(I-\unob\unob^\top Z).\]
The positive definiteness follows from the positiveness of the weights of the quadrature formulae. 
Finally using Woodbury identity the claim follows,
\begin{equation*}
\begin{aligned}
&\left((1+\gamma)Z-\gamma Z\unob\unob^\top Z\right)^{-1}= \frac{1}{1+\gamma}Z^{-1} -\left(\frac{1}{1+\gamma}\right)^2\unob\left(- \frac{1}{\gamma}I_s+\frac{1}{1+\gamma}\unob^\top Z\unob \right)^{-1}\unob^\top=\\
&=\frac{1}{1+\gamma}Z^{-1} -\left(\frac{1}{1+\gamma}\right)^2\unob\left(-\frac{1}{\gamma}+\frac{1}{1+\gamma}\right)^{-1}\unob^\top=\frac{1}{1+\gamma}Z^{-1}+\frac{\gamma}{1+\gamma}\unob\unob^\top .
\end{aligned}
\end{equation*} \text{ }
\end{proof}

\section{Algebraic Preconditioners}\label{sec:AlgebraicPreconditioners} In this section we study algebraic preconditioners for the saddle point matrix \eqref{eq:saddle_point} based on the seminal work \cite{murphy2000note}, where the authors showed that a saddle point matrix $\begin{pmatrix} C& B_2^\top\\ B_1 & 0\end{pmatrix}$ can be optimally preconditioned by $\mathcal{P}:=\text{diag}(C,S)$, where $S=B_1C^{-1}B_2^\top$ is the Schur complement.
However, since inverting $S$ is too expensive, one needs to use suitable
approximations. 
Let us consider the preconditioner $\widetilde{\mathcal{P}}:=\text{diag}(C,\widetilde{S}),$ obtained replacing the exact Schur complement $S$
with a symmetric positive definite approximation $\widetilde{S}$. Bounds on $\sigma(\widetilde{\mathcal{P}}^{-1}\mathscr{S})$ in terms of $\sigma(\widetilde{S}^{-1}S)$ are provided, for instance, in Proposition 2 of \cite{rees2010optimal}.
For our analysis, we will need the following, slightly more precise, result.
\begin{lemma}[Spectrum of $\widetilde{\mathcal{P}}^{-1}\mathscr{S}$]\label{thm:Pt}
The matrix $\widetilde{\mathcal{P}}^{-1} \mathscr{S}$ has eigenvalue $1$ with multiplicity $N_h$. The remaining $2N\cdot N_h$ eigenvalues are distinct and equal to 
\[\lambda_j=\frac{1+\sqrt{1+4\sigma_j}}{2},\quad \lambda_{j+N\cdot N_h}=\frac{1-\sqrt{1+4\sigma_j}}{2},\]
where $\sigma_j$ are the eigenvalues of $\widetilde{S}^{-1}S$, and $j=1,\dots, N\cdot N_h$.
\end{lemma}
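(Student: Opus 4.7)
The plan is to analyze the generalized eigenvalue problem $\mathscr{S} v = \lambda \widetilde{\mathcal{P}} v$ directly. Writing $v = (x,y)^\top$ conformally with the block structure of $\widetilde{\mathcal{P}} = \mathrm{diag}(C,\widetilde S)$, the system becomes
$$(1-\lambda)\, C x + B^\top y = 0, \qquad B x = \lambda \widetilde S\, y,$$
and I would split the analysis according to whether the second component $y$ vanishes.

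In the case $y = 0$, since $C$ is symmetric positive definite the first equation forces $\lambda = 1$ (the alternative $x=0$ being trivial) and the second reduces to $B x = 0$. The associated eigenspace therefore has dimension $\dim \ker B$. Because $B = (A \mid -Z\unob M_s)$ with $A$ block diagonal and invertible (each $A_{\w_i}$ is SPD and $\zeta_i > 0$), $B$ has full row rank $N N_h$, so $\dim \ker B = (N+1)N_h - N N_h = N_h$, which yields the stated multiplicity of the eigenvalue $1$. In the case $y \ne 0$, I would first rule out $\lambda = 1$: the first equation would then give $B^\top y = 0$, contradicting the injectivity of $B^\top$ just established. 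Assuming $\lambda \ne 1$, the first equation can be solved as $x = -(1-\lambda)^{-1} C^{-1} B^\top y$, and substituting into the second produces
$$\widetilde S^{-1} S\, y = (\lambda^2 - \lambda)\, y, \qquad S := B C^{-1} B^\top.$$
Thus each eigenvalue $\sigma_j$ of $\widetilde S^{-1} S$ contributes two eigenvalues of $\widetilde{\mathcal{P}}^{-1} \mathscr{S}$, namely the roots $(1 \pm \sqrt{1 + 4 \sigma_j})/2$ of $\lambda^2 - \lambda - \sigma_j = 0$. Since $\widetilde S^{-1} S$ is similar to the SPD matrix $\widetilde S^{-1/2} S \widetilde S^{-1/2}$, every $\sigma_j$ is strictly positive, so the two roots are real, distinct from each other and different from $1$.

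The main obstacle is the bookkeeping that confirms the entire spectrum has been described. Counting yields $N_h + 2 N N_h = (2N+1) N_h$ eigenvalues, matching the order of $\mathscr{S}$, but one still needs to exhibit that many linearly independent eigenvectors. The case-$1$ eigenvectors have vanishing $y$-component and are therefore independent from any case-$2$ eigenvector, which has a nonzero $y$-component; within case $2$, the diagonalizability of $\widetilde S^{-1} S$ provides $N N_h$ independent second components $y$, and for each such $y$ the two values $\lambda_\pm$ produce two distinct first components $x$ via $x = -(1-\lambda)^{-1} C^{-1} B^\top y$, giving $2 N N_h$ independent eigenvectors. This completeness argument, though elementary, is the delicate step, and it relies crucially on the strict positivity of the $\sigma_j$'s obtained via the similarity transform.
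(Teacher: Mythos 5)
Your proof is correct and follows essentially the same route as the paper's: both reduce the block eigenvalue problem to the quadratic relation $\lambda(\lambda-1)=\sigma_j$ for the eigenvalues $\sigma_j$ of $\widetilde{S}^{-1}S$, with eigenvectors of the form $\bigl((\lambda-1)^{-1}C^{-1}B^\top \mathbf{v}_j,\mathbf{v}_j\bigr)$, and both identify the eigenvalue $1$ with $\mathrm{Ker}\,B\times\{0\}$, of dimension $N_h$. The only difference is one of direction and thoroughness: you eliminate the first component and derive the relation, then handle exhaustiveness (positivity of the $\sigma_j$ via the similarity to $\widetilde{S}^{-1/2}S\widetilde{S}^{-1/2}$ and the linear-independence count), whereas the paper verifies the same eigenpairs forward and leaves completeness implicit.
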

\begin{proof}
An eigenpair $((\mathbf{x},\mathbf{y}),\lambda)$ of the preconditioned system $\widetilde{\mathcal{P}}^{-1}\mathscr{S}$ satisfies
\begin{equation*}
\begin{array}{rll}
\mathbf{x} &+ C^{-1}B^\top \mathbf{y}&=\lambda \mathbf{x},\\
\widetilde{S}^{-1} B \mathbf{x}& &=\lambda \mathbf{y},
\end{array}
\end{equation*}
where $\lambda \in\setR$, $\mathbf{x}\in \setR^{(N+1)N_h}$ and $\mathbf{y}\in \setR^{N\cdot N_h}$. As shown in \cite[Proposition 2]{rees2010optimal}, $((\mathbf{x},\mathbf{0}),1)$ is trivially an eigenpair for every $\mathbf{x}\in \text{Ker}B$. Thus, $\lambda=1$ is an eigenvalue with multiplicity $\text{dim}(\text{Ker}B)=N_h$.
Next, given the eigenpair $(\mathbf{v}_i,\sigma_i)$ of $\widetilde{S}^{-1}S$ for $i=1,\dots, N\cdot N_h$, it is immediate to verify  that $((\mathbf{x}_i,\mathbf{y}_i),\lambda_i)=((\frac{C^{-1}B^\top \mathbf{v}_i}{\lambda_i-1},\mathbf{v}_i),\frac{1+\sqrt{1+4\sigma_i}}{2})$ is an eigenpar of $\widetilde{\mathcal{P}}^{-1}\mathscr{S}$ as,
\begin{equation*}
\begin{aligned}
\mathbf{x}_i+C^{-1}B^\top \mathbf{y}_i&= \frac{C^{-1}B^\top \mathbf{v}_i}{\lambda_i-1}+C^{-1}B^\top \mathbf{v}_i= \frac{\lambda_i}{\lambda_i-1}C^{-1}B^\top v_i=\lambda_i \mathbf{x}_i,\\
\widetilde{S}^{-1}B\mathbf{x}_i&=\frac{1}{\lambda_i-1}\widetilde{S}^{-1}S\mathbf{x}_i=\frac{\sigma_i}{\lambda_i-1} \mathbf{v}_i= \lambda_i \mathbf{y}_i, 
\end{aligned}
\end{equation*}
where we used that $\lambda_i(\lambda_i-1)=\sigma_i.$ Similarly one verifies that $(\mathbf{x}_{N\cdot N_h+i},\mathbf{y}_{N\cdot N_h+i},\lambda_{N\cdot N_h+i})=(\frac{C^{-1}B^\top \mathbf{v}_i}{\lambda_{N\cdot N_h+i}-1},\mathbf{v}_i,\frac{1-\sqrt{1+4\sigma_i}}{2})$ is an eigenpair for $i=1,\dots,N\cdot N_h$, and the claim follows.
\end{proof}
Lemma \ref{thm:Pt} reduces the problem of estimating $\sigma(\widetilde{\mathcal{P}}^{-1}\mathscr{S})$ to the problem of estimating $\sigma(\widetilde{S}^{-1}S)$.

For the simpler deterministic OCP, the exact Schur complement is $S=A_sM_s^{-1}A_s+\frac{1}{\beta} M_s$, where $A_s$ is the stiffness matrix, $M_s$ the mass matrix and $\beta$ is the control regularization parameter. In \cite{rees2010all,rees2010optimal}, the authors approximate $S$ with $A_sM_s^{-1}A_s$, and obtain eigenvalues estimates for the preconditioned system which clearly show a $\beta$ dependence. A similar approximation in the context of robust OCPUU has been first proposed in \cite{Kouri2018}, though without a theoretical analysis. In subsection \ref{sec:firstschur}, we provide a full characterization of the spectrum of the preconditioned system which highlights both the dependence on $\beta$ and on the random field extremal values.

In subsection \ref{sec:matchschur}, we propose instead a $\beta$-robust preconditioner based on a more involved factorization of the Schur complement of \eqref{eq:saddle_point}, inspired by works on deterministic OCPs in \cite{pearson2012new,rees2010all,rees2010optimal,kourikkt,liu2020parameter,pearson2018matching} 

\subsection{A first Schur complement approximation}\label{sec:firstschur}
The exact Schur complement of the saddle point matrix $\mathscr{S}$ in \eqref{eq:saddle_point} is
\begin{equation}\label{eq:Schurcomplement_OCPUU}
S:=BC^{-1}B^\top= AM_\gamma^{-1}A+\frac{1}{\beta }Z\unob M_s\unob^\top Z.
\end{equation}
The term $AM_\gamma^{-1}A$ is block diagonal if and only if $\gamma=0$, in which case it is the direct generalization of the matrix which appears in a deterministic OCP, except that the diagonal blocks are multiplied by the weights of quadrature formula. On the other hand, the term $\frac{1}{\beta}Z\unob M_s\unob^\top Z$ is difficult to handle as it has significantly different properties from the corresponding $\frac{1}{\beta}M_s$ term of deterministic OCPs. First, $\frac{1}{\beta}Z\unob M_s\unob^\top Z$ is a dense-block matrix, where each block is given by a mass matrix. Second it is a relatively low-rank term. Its effect is to couple all the equations, increasing the difficulties to construct $\beta$ robust preconditioners. We remark that a similar low-rank perturbation appears in deterministic OCP with a control acting on a subset of the boundary, see \cite{heidel2019preconditioning}.

The first approximation $\widetilde{S}$, and corresponding preconditioner $\widetilde{P}$, we consider is obtained dropping the $\beta$-dependent low-rank term,
\begin{equation}\label{eq:definitionSt}
\widetilde{S}:=A M_\gamma^{-1} A\approx S,\quad \widetilde{P}:=\begin{pmatrix}
C & 0\\
0 &\widetilde{S}
\end{pmatrix}.
\end{equation}
Computing $\widetilde{S}^{-1}S$ we obtain
\begin{equation*}
\begin{aligned}
&\widetilde{S}^{-1}S=(AM_\gamma^{-1}A)^{-1}\left(AM_\gamma^{-1}A+\frac{1}{\beta}Z \unob M_s\unob^\top Z\right)=I+\frac{1}{\beta}A^{-1}M_\gamma A^{-1} Z \unob M_s\unob^\top Z=:I+\frac{1}{\beta}\widetilde{H},
\end{aligned}
\end{equation*}
that is, $\widetilde{S}^{-1}S$ is the identity plus a $\beta$-dependent low-rank term. Hence, $\widetilde{S}^{-1}S$ will have at most $N_h$ eigenvalues different from one since $\text{rank}\left(\unob \unob^\top Z\right)=N_h$. To study the spectrum of $\widetilde{H}$, we consider the similar matrix 
\[H:=Z\widetilde{H}Z^{-1}= ZA^{-1}M_\gamma A^{-1}Z M\unob\unob^\top=(1+\gamma)ZA^{-1} MZ A^{-1}ZM\unob\unob^\top -\gamma ZA^{-1}MZ\unob\unob^\top Z A^{-1}ZM\unob\unob^\top.\]
A characterization of the spectrum of $\widetilde{S}^{-1}S$ is provided in the following Lemma.

\begin{lemma}[Spectrum of $\widetilde{S}^{-1}S$]\label{thm:spectrumStildeinvS}
The spectrum of $\widetilde{S}^{-1}S$ satisfies $\sigma(\widetilde{S}^{-1}S)=\{1\}\cup \left\{1+\frac{1}{\beta}\mu_j\right\}_{j=1}^{N_h},$
where $\mu_j$ are the eigenvalues of $\EQ\LQ K^2_\w\RQ +\gamma \left( \EQ \LQ K^2_\w \RQ - \EQ \LQ K_\w\RQ ^2\right),$ with $K_\w:=A_\w^{-1}M_s$.
\end{lemma}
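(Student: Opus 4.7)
The plan is to exploit the low-rank factorization of $\widetilde H$ already derived in the paragraph preceding the statement. From the identity $\widetilde{S}^{-1}S=I+\tfrac{1}{\beta}\widetilde H$, the spectrum of $\widetilde S^{-1}S$ equals $\{1+\lambda/\beta:\lambda\in\sigma(\widetilde H)\}$, and since $H=Z\widetilde H Z^{-1}$ is similar to $\widetilde H$, it suffices to determine $\sigma(H)$.

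The first step is the structural observation that $H$ factors as $H=X\unob\unob^\top$ with $X=(1+\gamma)ZA^{-1}MZA^{-1}ZM-\gamma ZA^{-1}MZ\unob\unob^\top ZA^{-1}ZM$, read off directly from the expression displayed in the text. Because $\mathrm{rank}(\unob\unob^\top)=N_h$, this forces $\mathrm{rank}(H)\leq N_h$, so $0\in\sigma(H)$ with multiplicity at least $(N-1)N_h$, contributing the eigenvalue $1$ to $\sigma(\widetilde S^{-1}S)$. Then, by the classical identity $\sigma(PQ)\setminus\{0\}=\sigma(QP)\setminus\{0\}$ applied with $P=X\unob$ and $Q=\unob^\top$, the remaining nonzero eigenvalues of $H$ coincide with those of the much smaller $N_h\times N_h$ matrix $\unob^\top X\unob$, reducing the problem to the identification of this matrix.

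The second step is a direct block-matrix computation. Using that $A$, $Z$, $M$ are block-diagonal with blocks $\zeta_iA_{\w_i}$, $\zeta_i I_s$, $M_s$ respectively, one checks $ZA^{-1}M=\mathrm{blockdiag}(K_{\w_i})$, $ZA^{-1}ZM=\mathrm{blockdiag}(\zeta_i K_{\w_i})$, and hence $ZA^{-1}MZA^{-1}ZM=\mathrm{blockdiag}(\zeta_i K_{\w_i}^2)$. Sandwiching each of these by $\unob^\top$ and $\unob$ collapses the block sums into quadrature averages: $\unob^\top ZA^{-1}MZA^{-1}ZM\unob=\EQ\LQ K_\w^2\RQ$ and $\unob^\top ZA^{-1}MZ\unob=\unob^\top ZA^{-1}ZM\unob=\EQ\LQ K_\w\RQ$. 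Substituting into $X$ yields $\unob^\top X\unob=(1+\gamma)\EQ\LQ K_\w^2\RQ-\gamma\EQ\LQ K_\w\RQ^2=\EQ\LQ K_\w^2\RQ+\gamma(\EQ\LQ K_\w^2\RQ-\EQ\LQ K_\w\RQ^2)$, whose eigenvalues are the $\mu_j$ in the statement.

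I expect the only delicate point to be the block bookkeeping in this last computation: one must track carefully how the factors of $\zeta_i$ coming from $Z$ cancel the $\zeta_i^{-1}$ appearing in $A^{-1}$, so that sandwiching by $\unob^\top$ and $\unob$ produces exactly $\EQ\LQ K_\w\RQ$ and $\EQ\LQ K_\w^2\RQ$ without spurious weights. Once this reduction is carried out correctly, the rank argument and the identity $\sigma(PQ)\setminus\{0\}=\sigma(QP)\setminus\{0\}$ conclude the proof.
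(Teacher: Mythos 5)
Your proof is correct, and it reaches the same reduced matrix as the paper but by a cleaner mechanism. The paper also passes to the similar matrix $H=Z\widetilde H Z^{-1}$, but then writes out $H$ in full block form (all block columns identical, with blocks $H_i$), observes the rank bound, and performs the reduction by hand: given an eigenpair $H\underline{\vb}=\lambda\underline{\vb}$, it sums the block equations to show $\bigl(\sum_i H_i\bigr)\bigl(\sum_j \vb_j\bigr)=\lambda\bigl(\sum_j\vb_j\bigr)$, and conversely reconstructs eigenvectors of $H$ from those of $\sum_i H_i$ via $\underline{\vb}=\frac{1}{\lambda}\mathrm{diag}(H_1,\dots,H_N)\unob\mathbf{w}$. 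Your factorization $H=X\unob\unob^\top$ together with the identity $\sigma(PQ)\setminus\{0\}=\sigma(QP)\setminus\{0\}$ (with $P=X\unob$, $Q=\unob^\top$) accomplishes exactly this reduction, since $\unob^\top X\unob=\sum_i H_i$, and it buys two things: it handles both inclusions of the set identity at once, and it sidesteps a small lacuna in the paper's argument, namely the case $\sum_j\vb_j=0$, in which the summed equation degenerates and $\bigl(\lambda,\sum_j\vb_j\bigr)$ is not an eigenpair (one must argue separately that then $\lambda=0$); your route never encounters this issue. What the paper's hands-on computation buys in exchange is the explicit eigenvector correspondence between $H$ and the reduced matrix, which is what makes the deflation strategy discussed in the remark after Theorem \ref{thm:Ptbounds} implementable. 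Your block bookkeeping in the second step (the cancellation of the weights $\zeta_i$ against the $\zeta_i^{-1}$ in $A^{-1}$, giving $ZA^{-1}M=\mathrm{diag}(K_{\w_1},\dots,K_{\w_N})$ and $\unob^\top ZA^{-1}MZA^{-1}ZM\unob=\EQ\LQ K_\w^2\RQ$, $\unob^\top ZA^{-1}MZ\unob=\unob^\top ZA^{-1}ZM\unob=\EQ\LQ K_\w\RQ$) is exactly right and matches the paper's computation of $\sum_i H_i=(1+\gamma)\EQ\LQ K_\w^2\RQ-\gamma\EQ\LQ K_\w\RQ^2$.
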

\begin{proof}

As $ZA^{-1}=\text{diag}(A^{-1}_{\w_1},A^{-1}_{\w_2},\dots,A^{-1}_{\w_N})$, direct calculations show that
\[ZA^{-1}MZA^{-1}ZM \unob\unob^\top=\begin{pmatrix}
\zeta_1 K^2_{\w_1} & \zeta_1 K^2_{\w_1} &\cdots &\zeta_1 K^2_{\w_1}\\
\zeta_2 K^2_{\w_2} & \zeta_2 K^2_{\w_2} &\cdots &\zeta_2 K^2_{\w_2}\\
\vdots & \vdots & \vdots &\vdots\\
\zeta_N K^2_{\w_N} & \zeta_N K^2_{\w_N} &\cdots &\zeta_N K^2_{\w_N}\\\end{pmatrix},
\]
and
\begin{equation*}
\begin{aligned}
ZA^{-1}MZ\unob\unob^\top ZA^{-1}ZM\unob\unob^\top&=  
\begin{pmatrix}
\zeta_1 K_{\w_1} & \cdots &\zeta_1 K_{\w_1}\\
\zeta_2 K_{\w_2} & \cdots &\zeta_2 K_{\w_2}\\
\vdots & \vdots & \vdots &\vdots\\
\zeta_N K_{\w_N} &\cdots &\zeta_N K_{\w_N}\\
\end{pmatrix}
\begin{pmatrix}
\zeta_1 K_{\w_1} & \cdots &\zeta_1 K_{\w_1}\\
\zeta_2 K_{\w_2} & \cdots &\zeta_2 K_{\w_2}\\
\vdots & \vdots  &\vdots\\
\zeta_N K_{\w_N} &\cdots &\zeta_N K_{\w_N}\\\end{pmatrix}\\
&=\begin{pmatrix}
\zeta_1 K_{\w_1} \left(\sum_{i=1}^N \zeta_i K_{\w_i}\right) &\cdots & \zeta_1 K_{\w_1} \left(\sum_{i=1}^N \zeta_i K_{\w_i}\right)\\
\zeta_2 K_{\w_2} \left(\sum_{i=1}^N \zeta_i K_{\w_i}\right) & \cdots & \zeta_2 K_{\w_2} \left(\sum_{i=1}^N \zeta_i K_{\w_i}\right)\\
\vdots & \vdots & \vdots\\
\zeta_N K_{\w_N} \left(\sum_{i=1}^N \zeta_i K_{\w_i}\right)  &\cdots & \zeta_N K_{\w_N} \left(\sum_{i=1}^N \zeta_i K_{\w_i}\right)
\end{pmatrix}
\end{aligned}
\end{equation*}
The matrix $H$ is then equal to
\begin{equation*}
H=\begin{pmatrix}
\zeta_1 K_{\w_1}^2 +\gamma\left(\zeta_1 K_{\w_1}^2-\zeta_1K_{\w_1}\left(\sum_{i=1}^N \zeta_i K_{\w_i}\right)\right) & \dots & \zeta_1 K_{\w_1}^2 +\gamma\left(\zeta_1 K_{\w_1}^2-\zeta_1K_{\w_1}\left(\sum_{i=1}^N \zeta_i K_{\w_i}\right)\right)\\
\zeta_2 K_{\w_2}^2 +\gamma\left(\zeta_2 K_{\w_2}^2-\zeta_2K_{\w_2}\left(\sum_{i=1}^N \zeta_i K_{\w_i}\right)\right) & \dots & \zeta_2 K_{\w_2}^2 +\gamma\left(\zeta_2 K_{\w_2}^2-\zeta_2K_{\w_2}\left(\sum_{i=1}^N \zeta_i K_{\w_i}\right)\right)\\
\vdots & \vdots &\vdots\\
\zeta_N K_{\w_N}^2 +\gamma\left(\zeta_N K_{\w_N}^2-\zeta_NK_{\w_N}\left(\sum_{i=1}^N \zeta_i K_{\w_i}\right)\right) & \dots & \zeta_N K_{\w_N}^2 +\gamma\left(\zeta_N K_{\w_N}^2-\zeta_NK_{\w_N}\left(\sum_{i=1}^N \zeta_i K_{\w_i}\right)\right)\\
\end{pmatrix},
\end{equation*}
and it is clearly not full rank, as only one block column is linearly independent. The rank of $H$ is $N_h$, being $N_h$ the size of $K_{\w_i}$, i.e. the number of degrees of freedom in the finite element discretization.

We then look for an eigenpair $(\lambda,\underline{\vb})$, where $\underline{\vb}=(\vb_1,\dots,\vb_N)\in \setR^{N\cdot N_h}$. 
Considering the eigenvalue equation $H\underline{\vb}=\lambda \underline{\vb}$ and denoting with $H_i$ the blocks of the matrix $H$, we get
\begin{equation}\begin{pmatrix}\label{eq:eigenvalue_system}
H_1 & \dots &H_1\\
H_2& \dots &H_2\\
\vdots & \vdots & \vdots\\
H_N & \dots &H_N\\
\end{pmatrix}\begin{pmatrix}
\vb_1\\\vb_2 \\ \vdots\\\vb_N
\end{pmatrix}=\lambda \begin{pmatrix}
\vb_1\\\vb_2 \\ \vdots\\\vb_N
\end{pmatrix}.
\end{equation}
Summing up all the equations one gets $(\sum_{i=1}^N H_i)(\sum_{i=1}^N \vb_i)=\lambda(\sum_{i=1}^N \vb_i)$. That is, if $(\lambda,\underline{\vb})$ is an eigenpair of $H$, then $(\lambda,\mathbf{w}:=\sum_{j=1}^N \vb_j)$ is an eigenpair of the reduced matrix $\sum_{i=1}^N H_i$. Thus, we can first compute the eigenpair $(\lambda,\mathbf{w})$ of $(\sum_{i=1}^N H_i)$, and then recover the eigenpair $(\lambda,\vb)$ of $H$ from the eigenvalue system \eqref{eq:eigenvalue_system}, $\vb=\frac{1}{\lambda}\text{diag}(H_1,\dots,H_N)\unob\mathbf{w}$.
Calculating explicitly $(\sum_{i=1}^N H_i)$ we obtain,
\begin{equation*}
\begin{aligned}
\sum_{i=1}^N H_i=\sum_{i=1}^N \zeta_i K_{\w_i}^2 +\gamma\LQ\sum_{i=1}^N \zeta_i K_{\w_i}^2 -\left(\sum_{j=1}^N \zeta_j K_{\w_j}\right)^2\RQ
=\EQ\LQ K^2_\w\RQ +\gamma \left( \EQ \LQ K^2_\w \RQ - \EQ \LQ K_\w\RQ ^2\right),
\end{aligned}
\end{equation*}
and the claim follows.
\end{proof}

Lemma \ref{thm:Pt} and \ref{thm:spectrumStildeinvS} guarantee that the spectrum of $\widetilde{\mathcal{P}}^{-1}\mathscr{S}$ is well clustered around 1 and $\frac{1\pm\sqrt{5}}{2}$, except for $2N_h$ eigenvalues which depend on $\beta$ and on the spectrum of $\EQ\LQ K^2_\w\RQ  +\gamma \left( \EQ \LQ K^2_\w \RQ - \EQ \LQ K_\w\RQ ^2\right)$.
To further characterize this spectrum, we study the matrices $L_{\w_i}:=A_{\w_i}^{-1}\Ms A_{\w_i}^{-1}$ and $\E\LQ L_\w\RQ$. We briefly recall some useful results, that is an inverse inequality \cite[Proposition 6.3.2]{quarteroni2009numerical},
an equivalence of norms \cite[Proposition 6.3.1 ]{quarteroni2009numerical}, and a characterization of the spectrum of the mass matrix \cite[Proposition 1.29]{elman2014finite}.

\begin{lemma}\label{thm:usefulresults}
If the triangulation $\mathcal{T}_h$ is quasi-uniform,
\begin{align}
\exists C_I>0\quad &\|\nabla v_h\|_{L^2(D)} \leq C_I h^{-1} \|v_h\|_{L^2(D)},\quad \forall v_h\in Y_h,\label{eq:inverseinequality}\\
\exists C_1,C_2>0\quad &C_1 h^d|\vb_h|^2\leq \|v_h\|^2_{L^2(D)} \leq C_2 h^d |\vb_h|^2,\quad  \forall v_h\in Y_h,\label{eq:equivalencenorm}\\
&\sigma(\Ms)\subset [C_1 h^d,C_2 h^d ]\label{eq:spectrumM_s}
\end{align}
where $\vb_h$ is the vector collecting the nodal degrees of freedom of $v_h$, $|\cdot |$ is the vector euclidean norm and $d$ is the spatial dimension.
\end{lemma}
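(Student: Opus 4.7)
All three statements are classical finite element results, and the plan is to obtain them by standard scaling arguments on a reference element combined with the quasi-uniformity hypothesis $h_K \simeq h$ for every $K\in \mathcal{T}_h$.

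For \eqref{eq:inverseinequality}, I would fix a reference simplex $\widehat{K}$ and use that on the finite-dimensional space $\mathbb{P}_r(\widehat{K})$ all norms are equivalent, so there exists a constant $\widehat{C}$ (depending only on $r$) such that $\|\nabla_{\widehat{x}} \widehat{v}\|_{L^2(\widehat{K})} \leq \widehat{C} \|\widehat{v}\|_{L^2(\widehat{K})}$ for every $\widehat{v}\in \mathbb{P}_r(\widehat{K})$. For a generic $K\in \mathcal{T}_h$ with affine map $F_K:\widehat{K}\to K$, the standard Jacobian estimates transform these two norms picking up respectively factors $h_K^{d-2}$ and $h_K^{d}$. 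Squaring, summing over elements and invoking quasi-uniformity to replace each $h_K$ by $h$ yields the claim with a constant $C_I$ depending only on shape regularity and on $r$.

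For \eqref{eq:equivalencenorm}, the same scaling strategy applies. The local evaluation map $\widehat{v}\mapsto \widehat{\vb}$ from $\mathbb{P}_r(\widehat{K})$ to $\setR^{N_{\widehat{K}}}$ is a linear isomorphism between finite-dimensional spaces, so norm equivalence on the reference element gives $\widehat{C}_1|\widehat{\vb}|^2\leq \|\widehat{v}\|^2_{L^2(\widehat{K})}\leq \widehat{C}_2|\widehat{\vb}|^2$. Mapping to $K$ introduces the Jacobian $|\det DF_K|\simeq h_K^d$ in the $L^2$ norm but leaves the discrete euclidean norm unchanged (the nodal values are preserved). Summing over elements and again using quasi-uniformity produces the global bound \eqref{eq:equivalencenorm}. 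Finally, \eqref{eq:spectrumM_s} follows immediately: by definition of $\Ms$ one has $\vb_h^\top \Ms \vb_h=\|v_h\|^2_{L^2(D)}$ for every $v_h\in Y_h$, so \eqref{eq:equivalencenorm} reads $C_1 h^d|\vb_h|^2\leq \vb_h^\top \Ms\vb_h\leq C_2 h^d|\vb_h|^2$, and the Courant--Fischer characterization of the extremal eigenvalues of the symmetric positive definite matrix $\Ms$ gives $\sigma(\Ms)\subset[C_1 h^d,C_2 h^d]$.

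There is no real obstacle in the argument: the proof reduces to careful bookkeeping of scaling factors. The only point requiring attention is that quasi-uniformity, and not mere shape regularity, is essential in \eqref{eq:inverseinequality} and \eqref{eq:spectrumM_s}, since otherwise the constants would depend on $\min_K h_K$ and $\max_K h_K$ separately rather than on the single mesh parameter $h$. As the three facts are entirely classical, in the manuscript I would simply state them and refer to \cite{quarteroni2009numerical} and \cite{elman2014finite} for complete proofs.
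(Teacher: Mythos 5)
Your proposal is correct: the standard reference-element scaling argument you sketch is exactly how these classical results are established, and your conclusion for \eqref{eq:spectrumM_s} via $\vb_h^\top \Ms \vb_h = \|v_h\|^2_{L^2(D)}$ and the Courant--Fischer characterization is sound. The paper itself gives no proof and simply cites \cite{quarteroni2009numerical} (Propositions 6.3.1 and 6.3.2) and \cite{elman2014finite} (Proposition 1.29), which is precisely what you propose to do in the final sentence, so your treatment matches the paper's.
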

Using the results of Lemma \eqref{thm:usefulresults}, the following Lemma can be easily proved.
\begin{lemma}\label{thm:eigen_A}
Let $\lambda(\w)$ be an eigenvalue of $A_\w$. Then, for a.e. $\w\in\Omega$ 
\[\df_{\min}(\w)\frac{C_1 h^d}{C_P^2} \leq \lambda(\w) \leq \df_{\max}(\w) C_I^2C_2 h^{d-2}, \]
where $C_P$ is the Poincar\'{e} constant.
\end{lemma}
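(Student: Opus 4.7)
The plan is to characterize the eigenvalues of $A_\w$ via the Rayleigh quotient and then bound this quotient above and below using, on the one hand, the coercivity/continuity estimates \eqref{eq:coerv_conti} for $a_\w(\cdot,\cdot)$, and on the other hand the tools collected in Lemma \ref{thm:usefulresults} to pass between the $L^2$-norm of a finite element function and the Euclidean norm of its coefficient vector. Since $A_\w$ is the stiffness matrix of the symmetric bilinear form $a_\w$ on the nodal basis of $Y^h$ (with mass matrix $M_s$ representing the $L^2$ inner product), for any nonzero eigenvector $\vb_h\in\setR^{N_h}$ with associated finite element function $v_h\in Y^h$ one has $\vb_h^\top A_\w \vb_h = a_\w(v_h,v_h)$ and $|\vb_h|^2 = \vb_h^\top \vb_h$, so every eigenvalue $\lambda(\w)$ of $A_\w$ can be written as
\[
\lambda(\w) \;=\; \frac{a_\w(v_h,v_h)}{|\vb_h|^2}
\]
for a suitable $v_h\in Y^h\setminus\{0\}$.

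For the upper bound, I would use the continuity part of \eqref{eq:coerv_conti} to get $a_\w(v_h,v_h)\leq \df_{\max}(\w)\,\|\nabla v_h\|^2_{L^2(D)}$, then apply the inverse inequality \eqref{eq:inverseinequality} to obtain $\|\nabla v_h\|^2_{L^2(D)} \leq C_I^2 h^{-2}\|v_h\|^2_{L^2(D)}$, and finally invoke the right-hand side of the norm equivalence \eqref{eq:equivalencenorm} to get $\|v_h\|^2_{L^2(D)} \leq C_2 h^d |\vb_h|^2$. Combining gives $\lambda(\w) \leq \df_{\max}(\w)\, C_I^2 C_2\, h^{d-2}$.

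For the lower bound, I would use the coercivity part of \eqref{eq:coerv_conti}, namely $a_\w(v_h,v_h)\geq \df_{\min}(\w)\,\|\nabla v_h\|^2_{L^2(D)}$, then the Poincar\'e inequality (whose constant $C_P$ was introduced in Section \ref{sec:notation}) in the form $\|\nabla v_h\|^2_{L^2(D)}\geq C_P^{-2}\|v_h\|^2_{L^2(D)}$, and then the left-hand side of \eqref{eq:equivalencenorm}, i.e. $\|v_h\|^2_{L^2(D)}\geq C_1 h^d |\vb_h|^2$. This yields $\lambda(\w)\geq \df_{\min}(\w)\,\frac{C_1 h^d}{C_P^2}$, which is exactly the claim.

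The proof is essentially a sequence of textbook estimates assembled in the right order, so there is no real obstacle; the only subtlety is remembering that the upper bound genuinely needs the inverse inequality (and hence the quasi-uniformity hypothesis on $\mathcal{T}_h$) because without it one cannot trade a gradient norm for an $L^2$-norm, while the lower bound is purely continuous and relies on Poincar\'e. I would state the proof in at most a few lines, presenting the two chains of inequalities in parallel.
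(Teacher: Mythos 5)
Your proof is correct and follows exactly the route the paper intends: the paper omits the argument, stating only that the lemma ``can be easily proved'' from Lemma \ref{thm:usefulresults}, and your Rayleigh-quotient argument combining the coercivity/continuity bounds \eqref{eq:coerv_conti}, the inverse inequality \eqref{eq:inverseinequality}, the norm equivalence \eqref{eq:equivalencenorm}, and the Poincar\'e inequality is precisely that intended argument, with the constants assembled correctly in both bounds.
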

\begin{lemma}\label{thm:eigen_L}
Defining $c_L:= \frac{C_1}{C_I^4C^2_2}$ and $C_L:=\frac{C_2C_P^4}{C_1^2}$, the following inclusions hold:
\begin{equation*}
\begin{array}{r l c}
\sigma\left(L_\w\right) &\subset\LQ \frac{c_L h^{4-d}}{\df_{\max}^2(\w)}, \frac{h^{-d}C_L }{\df_{\min}^2(\w)}\RQ,  & \text{for a.e. }\w \in \Omega,\\
\sigma\left(\EQ\LQ L_\w\RQ\right)&\subset \LQ c_L h^{4-d}\EQ\LQ \frac{1}{\df_{\max}^2(\w)}\RQ,   C_L h^{-d}\EQ\LQ \frac{1}{\df_{\min}^2(\w)}\RQ \RQ&
\end{array}
\end{equation*}
\end{lemma}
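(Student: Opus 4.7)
The plan is to prove both inclusions via Rayleigh quotient estimates, reducing everything to the spectral bounds on $A_\w$ and $M_s$ already established in Lemmas \ref{thm:usefulresults} and \ref{thm:eigen_A}.

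For the first inclusion, I would start from the observation that $L_\w = A_\w^{-1}M_s A_\w^{-1}$ is symmetric positive definite, so its extremal eigenvalues admit the Rayleigh-quotient characterization. Performing the substitution $\mathbf{w} = A_\w^{-1}\mathbf{v}$ gives
\begin{equation*}
\lambda_{\min}(L_\w) = \min_{\mathbf{w}\ne 0}\frac{\mathbf{w}^\top M_s \mathbf{w}}{\mathbf{w}^\top A_\w^2 \mathbf{w}},\qquad \lambda_{\max}(L_\w) = \max_{\mathbf{w}\ne 0}\frac{\mathbf{w}^\top M_s \mathbf{w}}{\mathbf{w}^\top A_\w^2 \mathbf{w}},
\end{equation*}
from which the elementary bounds $\lambda_{\min}(L_\w)\ge \lambda_{\min}(M_s)/\lambda_{\max}(A_\w)^2$ and $\lambda_{\max}(L_\w)\le \lambda_{\max}(M_s)/\lambda_{\min}(A_\w)^2$ follow. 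Plugging in the sandwich bound \eqref{eq:spectrumM_s} for the mass matrix and the bounds from Lemma \ref{thm:eigen_A} for $A_\w$, the powers of $h$ combine as $h^d/h^{2d-4} = h^{4-d}$ and $h^d/h^{2d} = h^{-d}$, and the constants collapse exactly to $c_L = C_1/(C_I^4 C_2^2)$ and $C_L = C_2 C_P^4/C_1^2$, giving the claimed bracket for almost every $\w$.

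For the second inclusion, I would use the linearity of the quadrature and the fact that $\EQ[L_\w] = \sum_{i=1}^N \zeta_i L_{\w_i}$ is a convex combination (up to the weights summing to $1$) of symmetric positive definite matrices. Using the standard min/max subadditivity
\begin{equation*}
\lambda_{\min}\!\left(\sum_i \zeta_i L_{\w_i}\right)\ge \sum_i \zeta_i\lambda_{\min}(L_{\w_i}),\qquad \lambda_{\max}\!\left(\sum_i \zeta_i L_{\w_i}\right)\le \sum_i \zeta_i\lambda_{\max}(L_{\w_i}),
\end{equation*}
which follow from $\min_{\mathbf v}\sum_i f_i(\mathbf v)\ge \sum_i \min_{\mathbf v} f_i(\mathbf v)$ and the analogous inequality for the maximum applied to the Rayleigh quotients $f_i(\mathbf v) = \zeta_i \mathbf v^\top L_{\w_i} \mathbf v/|\mathbf v|^2$, I can insert the sample-wise bounds from the first part at each node $\w_i$. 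The resulting sums reassemble into $\EQ[1/\df_{\max}^2]$ and $\EQ[1/\df_{\min}^2]$, with the $h$-powers and the constants $c_L,C_L$ unchanged since they do not depend on $\w$.

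There is essentially no serious obstacle here: the proof is an application of the Courant--Fischer characterization together with the already-available spectral bounds. The only mild point worth stressing is that the subadditivity of $\lambda_{\min}$ and $\lambda_{\max}$ for sums of symmetric matrices applies cleanly because the quadrature weights $\zeta_i$ are nonnegative, as enforced in Section \ref{sec:discretization}; this is exactly why the restriction to positive-weight quadrature formulae pays off in the analysis.
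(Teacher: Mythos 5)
Your proof is correct and takes essentially the same route as the paper's: the Rayleigh-quotient characterization of $\lambda_{\min}(L_\w)$ and $\lambda_{\max}(L_\w)$ with the substitution $\mathbf{w}=A_\w^{-1}\mathbf{v}$, combined with the spectral bounds on $M_s$ and $A_\w$, and then the sub/superadditivity of the extremal eigenvalues of the weighted sum (valid since $\zeta_i\geq 0$) to pass to $\EQ\LQ L_\w\RQ$. There are no substantive differences between your argument and the one in the paper.
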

\begin{proof}
The matrix $L_\w$ is symmetric and positive definite. Its extrema eigenvalues  are characterized by the Raleigh quotients,
\begin{equation}\label{eq:spectrum_Lw}
\begin{array}{l l }
\lambda_{\max}(L_\w) &=\sup_{\vb\in \setR^{N_h}} \frac{\vb^\top A_\w^{-1}\Ms A_\w^{-1} \vb}{\vb^\top \vb}= \sup_{\mathbf{w}\in \setR^{N_h}} \frac{\mathbf{w}^\top \Ms  \mathbf{w}}{\mathbf{w}^\top A^2_\w \mathbf{w}} \leq \frac{C_2 h^d C_P^4}{\df_{\min}^2(\w) C^2_1 h^{2d}} =\frac{h^{-d}}{\df_{\min}^2(\w)} \frac{C_2 C_P^4}{C^2_1},\\
\lambda_{\min}(L_\w) &=\inf_{\vb\in \setR^{N_h}} \frac{\vb^\top A_\w^{-1}\Ms A_\w^{-1}\vb}{\vb^\top\vb}=
\inf_{\mathbf{w}\in \setR^{N_h}} \frac{\mathbf{w}^\top \Ms \mathbf{w}}{\mathbf{w}^\top A^2_\w \mathbf{w}}\geq \frac{C_1 h^d}{\df_{\max}^2(\w) C_I^4 C^2_2 h^{2d-4}} =\frac{h^{4-d}}{\df_{\max}^2(\w)}\frac{C_1}{C_I^4 C^2_2}. 
\end{array}
\end{equation}
The matrix $\EQ\LQ L_\w\RQ$ is positive definite as well, being the convex combination of positive definite matrices. Using \eqref{eq:spectrum_Lw}, its extrema eigenvalues are bounded by
\begin{equation*}
\begin{aligned}
\lambda_{\max}(\EQ\LQ L_\w\RQ) &= \sup_{\vb\in \setR^{N_h}} \frac{ \sum_{i=1}^N \zeta_i \vb^\top L_{\w_i}  \vb}{\vb^\top \vb} \leq \sum_{i=1}^N\zeta_j \sup_{\vb\in \setR^{N_h}} \frac{\vb^\top L_{\w_i} \vb}{\vb^\top\vb}\leq  h^{-d}C_L\EQ\LQ \frac{1}{\df_{\min}^2(\w)}\RQ,\\
\lambda_{\min}(\EQ\LQ L_\w\RQ) &= \inf_{\vb\in \setR^{N_h}} \frac{ \sum_{i=1}^N \zeta_i \vb^\top L_{\w_i}  \vb}{\vb^\top \vb} \geq \sum_{i=1}^N\zeta_j \inf_{\vb\in \setR^{N_h}} \frac{\vb^\top L_{\w_i} \vb}{\vb^\top\vb}\geq  h^{4-d}c_L\EQ\LQ \frac{1}{\df_{\max}^2(\w)}\RQ.
\end{aligned}
\end{equation*}\text{ }
\end{proof}

\begin{theorem}[Characterization of the spectrum of $\widetilde{\mathcal{P}}^{-1}\mathscr{S}$]\label{thm:Ptbounds}
If the triangulation $\mathcal{T}_h$ is quasi-uniform,
then the eigenvalues of $\widetilde{\mathcal{P}}^{-1}\mathscr{S}$ satisfy either
\begin{equation}\label{thm:eq:Ptbounds}
\footnotesize{
\begin{aligned}
&\lambda=\left\{1,\frac{1+\sqrt{5}}{2},\frac{1-\sqrt{5}}{2}\right\},\text{ or }\\
\frac{1}{2}\left(1+\sqrt{5 +\frac{4d_L h^4}{\beta} \EQ\LQ \frac{1}{\df_{\max}^2(\w)}\RQ}\right)\leq &\lambda \leq \frac{1}{2}\left(1+\sqrt{5 +\frac{4D_L(1+\gamma)}{\beta}\EQ\LQ\frac{1}{\df_{\min}^2(\w)}\RQ} \right),\\
\frac{1}{2}\left(1-\sqrt{5 +\frac{4D_L(1+\gamma)}{\beta} \EQ\LQ\frac{1}{\df_{\min}^2(\w)}\RQ }\right)\leq &\lambda \leq \frac{1}{2}\left(1-\sqrt{5 +\frac{4d_L h^4}{\beta}\EQ\LQ \frac{1}{\df_{\max}^2(\w)}\RQ}\right),
\end{aligned}
}
\end{equation}
where $d_L=c_LC_1$, $D_L=C_LC_2$ are constants independent on $N,\beta,\gamma,h$ and on the random set of realizations $\left\{ \w_i \right\}_{i=1}^N$.
\end{theorem}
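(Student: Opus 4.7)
The plan is to combine Lemmas \ref{thm:Pt} and \ref{thm:spectrumStildeinvS} to reduce the statement to two-sided bounds on the $N_h$ nontrivial eigenvalues $\mu_j$ of $\EQ\LQ K^2_\w\RQ +\gamma ( \EQ \LQ K^2_\w \RQ - \EQ \LQ K_\w\RQ ^2)$. Specifically, Lemma \ref{thm:spectrumStildeinvS} gives $\sigma(\widetilde{S}^{-1}S) = \{1\} \cup \{1 + \mu_j/\beta\}_{j=1}^{N_h}$; feeding these $\sigma$'s into Lemma \ref{thm:Pt} produces the discrete values $1$ and $(1\pm\sqrt{5})/2$, together with the $2N_h$ values $(1\pm\sqrt{5+4\mu_j/\beta})/2$, so the three intervals in \eqref{thm:eq:Ptbounds} are exactly the images of $\mu_j$ under these maps. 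Thus everything reduces to proving
\[ d_L h^4 \, \EQ\LQ \tfrac{1}{\df_{\max}^2(\w)}\RQ \;\leq\; \mu_j \;\leq\; (1+\gamma)\, D_L \, \EQ\LQ \tfrac{1}{\df_{\min}^2(\w)}\RQ. \]

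To handle $\mu_j$, I would symmetrise through $\widetilde{K}_\w := M_s^{1/2} A_\w^{-1} M_s^{1/2}$, which is SPD and satisfies $K_\w = M_s^{-1/2}\widetilde{K}_\w M_s^{1/2}$. Since $M_s$ is deterministic, this similarity transports the matrix inside the expectation onto the symmetric matrix $\widetilde{M}:=(1+\gamma)\EQ\LQ\widetilde{K}_\w^2\RQ - \gamma\EQ\LQ\widetilde{K}_\w\RQ^2$, whose spectrum is exactly $\{\mu_j\}$. The upper bound is trivial, $\widetilde{M} \preceq (1+\gamma)\EQ\LQ\widetilde{K}_\w^2\RQ$, while the lower bound is the matrix Jensen inequality $\EQ\LQ\widetilde{K}_\w\RQ^2 \preceq \EQ\LQ\widetilde{K}_\w^2\RQ$; in the present discrete, positively-weighted setting this is just the identity $\EQ\LQ(\widetilde{K}_\w - \EQ\LQ\widetilde{K}_\w\RQ)^2\RQ = \EQ\LQ\widetilde{K}_\w^2\RQ - \EQ\LQ\widetilde{K}_\w\RQ^2 \succeq 0$, since each summand is PSD. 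Together these give $\EQ\LQ\widetilde{K}_\w^2\RQ \preceq \widetilde{M} \preceq (1+\gamma)\EQ\LQ\widetilde{K}_\w^2\RQ$, so the extreme $\mu_j$ are sandwiched by the extreme eigenvalues of $\EQ\LQ\widetilde{K}_\w^2\RQ$.

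The last step is to estimate $\sigma(\EQ\LQ\widetilde{K}_\w^2\RQ)$. Since $\widetilde{K}_\w^2 = M_s^{1/2} L_\w M_s^{1/2}$, a Rayleigh quotient argument gives $\lambda_{\min}(\widetilde{K}_\w^2) \geq \lambda_{\min}(L_\w)\lambda_{\min}(M_s)$ and $\lambda_{\max}(\widetilde{K}_\w^2) \leq \lambda_{\max}(L_\w)\lambda_{\max}(M_s)$; inserting Lemma \ref{thm:eigen_L} and \eqref{eq:spectrumM_s} yields the pointwise envelope $\sigma(\widetilde{K}_\w^2) \subset [d_L h^4/\df_{\max}^2(\w),\, D_L/\df_{\min}^2(\w)]$ with $d_L = c_L C_1$ and $D_L = C_L C_2$. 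Averaging against the positive quadrature weights $\zeta_i$ propagates the envelope to $\EQ\LQ\widetilde{K}_\w^2\RQ$ and concludes the argument. The only delicate point is the matrix Jensen inequality, which is the tool that controls the sign-indefinite term $-\gamma\EQ\LQ\widetilde{K}_\w\RQ^2$ and underlies the appearance of the factor $(1+\gamma)$ only in the upper bound; the remaining estimates are routine Rayleigh-quotient bookkeeping.
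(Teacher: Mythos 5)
Your proposal is correct and takes essentially the same approach as the paper: reduction of the problem via Lemmas \ref{thm:Pt} and \ref{thm:spectrumStildeinvS}, positive semi-definiteness of the variance term (matrix Jensen) so that $\gamma$ enters only the upper bound, and Rayleigh-quotient bounds built from Lemma \ref{thm:eigen_L} and the mass-matrix spectrum \eqref{eq:spectrumM_s}, yielding the same constants $d_L=c_LC_1$, $D_L=C_LC_2$. The only differences are cosmetic: you symmetrize once and for all by conjugating with $M_s^{1/2}$ and treat $\gamma=0$ and $\gamma>0$ uniformly through the sandwich $\EQ\LQ \widetilde{K}_\w^2\RQ \preceq \widetilde{M}\preceq (1+\gamma)\EQ\LQ \widetilde{K}_\w^2\RQ$, whereas the paper works with the non-symmetric products $K_\w=A_\w^{-1}M_s$ (invoking similarity) and handles the two cases separately.
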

\begin{proof}
Due to Lemma \ref{thm:Pt} and \ref{thm:spectrumStildeinvS}, $\lambda=1$ has multiplicity $N_h$, while $\lambda=\frac{1+\sqrt{5}}{2}$ and $\lambda=\frac{1-\sqrt{5}}{2}$ have each multiplicity equal to $(N-1)\cdot N_h$. Hence, we are left to estimate the $N_h$, $\beta$-dependent, eigenvalues of $\widetilde{S}^{-1}S$, which will correspond to $2N_h$ eigenvalues of $\widetilde{\mathcal{P}}^{-1}\mathscr{S}$ . To do so, we study the eigenvalues of $\EQ\LQ K_\w^2\RQ+\gamma \left( \EQ \LQ K^2_\w \RQ - \EQ \LQ K_\w\RQ ^2\right)$. We first suppose $\gamma=0$ and remark that $\EQ\LQ K_{\w}^2\RQ= \EQ\LQ L_\w \Ms\RQ= \EQ\LQ L_\w \RQ \Ms$, which is similar to $\Ms^{-1/2}\EQ\LQ L_\w \RQ M_s^{-1/2} $. 
Using Lemma \ref{thm:eigen_L}, the extrema eigenvalues of $\EQ\LQ K_{\w}^2\RQ$ are given by
\begin{equation*}\label{eq:spectrum_gamma0}
\begin{aligned}
&\lambda_{\max} (\EQ \LQ K_{\w}^2\RQ )= \sup_{\vb\in \setR^{N_h}} \frac{\vb^\top \EQ \LQ L_{\w}\RQ \vb}{\vb^\top \Ms^{-1}\vb}\leq  C_LC_2\EQ\LQ \frac{1}{\df_{\min}^2(\w)}\RQ.  \\
&\lambda_{\min} (\EQ \LQ K_{\w}^2\RQ )=\inf_{\vb\in \setR^{N_h}} \frac{\vb^\top \EQ \LQ L_{\w}\RQ \vb}{\vb^\top \Ms^{-1}\vb} \geq c_LC_1 h^4\EQ \LQ \frac{1}{\df_{\max}^2(\w)}\RQ,
\end{aligned}
\end{equation*}
thus $\sigma\left(\EQ \LQ K_{\w}^2\RQ\right)\subset \LQ d_L h^4\EQ \LQ \frac{1}{\df_{\max}^2(\w)}\RQ,D_L \EQ\LQ \frac{1}{\df_{\min}^2(\w)}\RQ \RQ$. From Lemma \ref{thm:spectrumStildeinvS} and \ref{thm:Pt}, the claim \eqref{thm:eq:Ptbounds} follows.

Next, we consider the case $\gamma\neq 0$ and observe that
\begin{equation*}
\begin{aligned}
&\EQ \LQ K^2_\w \RQ - \EQ \LQ K_\w\RQ ^2=\EQ\LQ A_\w^{-1}\Ms A_\w^{-1}M_s\RQ  -\EQ\LQ A_\w^{-1}\Ms \RQ\EQ\LQ A_\w^{-1}\Ms \RQ=\left(\EQ\LQ A_\w^{-1}\Ms A_\w^{-1}\RQ   -\EQ\LQ A_\w^{-1}\RQ \Ms \EQ\LQ A_\w^{-1}\RQ \right)\Ms\\
&= \EQ \LQ \left(A_\w^{-1} \Ms^{\frac{1}{2}}-\EQ \LQ A_\w^{-1}\Ms^{\frac{1}{2}}\RQ\right)\left(A_\w^{-1} \Ms^{\frac{1}{2}}-\EQ \LQ A_\w^{-1}\Ms^{\frac{1}{2}}\RQ\right)^\top\RQ M_s.
\end{aligned}
\end{equation*}
Thus, $\EQ \LQ K^2_\w \RQ - \EQ \LQ K_\w\RQ ^2$ can be written as the product between an expectation of a semi positive definite matrix and $M_s$, hence its eigenvalues are real and non-negative. Sharp estimates of the eigenvalues of $\EQ \LQ K^2_\w \RQ - \EQ \LQ K_\w\RQ ^2$ rely on bounds of the spectrum of $A_\w^{-1}-\EQ\LQ A_\w^{-1}\RQ$, which however are not available in terms of $\df_{\min}(\w)$ and $\df_{\max}(\w)$. To obtain an upper bound, we rely on the following estimates,
\begin{equation}\label{eq:upperbound_gamma}
\begin{array}{l l}
\lambda_{\max} \left(\EQ\LQ K_\w^2\RQ+\gamma \EQ \LQ K^2_\w \RQ - \gamma\EQ \LQ K_\w\RQ ^2\right)& \leq D_L(1+\gamma) \EQ \LQ \frac{1}{\df_{\min}^2(\w)}\RQ.\\
\end{array}
\end{equation}
To obtain a lower bound, we simply ignore the $\gamma$-dependent term,
\begin{equation}\label{eq:lowerbound_gamma}
\lambda_{\min}\left(\EQ\LQ K_\w^2\RQ+\gamma \left( \EQ \LQ K^2_\w \RQ - \EQ \LQ K_\w\RQ ^2\right)\right)\geq\lambda_{\min}\left(\EQ\LQ K_\w^2\RQ\right)=d_Lh^4\EQ \LQ \frac{1}{\df_{\max}^2(\w)}\RQ.
\end{equation}
Due to \eqref{eq:upperbound_gamma} and \eqref{eq:lowerbound_gamma}, \eqref{thm:eq:Ptbounds} follows using Lemma \ref{thm:Pt} and \ref{thm:spectrumStildeinvS}.
\end{proof}

Few comments are in order about Theorem \ref{thm:Ptbounds}. First, $\widetilde{P}$ will not lead to a $\beta$-robust convergence, as the spectrum clearly spreads as $\beta\rightarrow 0$. Second, as we increase the number of collocation points, $\EQ\LQ \frac{1}{\df_{\min}^2(\w)} \RQ$ and $\EQ\LQ \frac{1}{\df_{\max}^2(\w)} \RQ$ converge to the continuous expectations $\E\LQ \frac{1}{\df_{\min}^2(\w)} \RQ$ and $\E\LQ \frac{1}{\df_{\max}^2(\w)}\RQ$ which are finite quantities, since we assumed that Assumption \ref{ass:diff} holds for $p\geq 2$ for the well-posedness of the OCP.
Third, $\EQ\LQ \frac{1}{\df_{\min}^2(\w)} \RQ$ and $\EQ\LQ \frac{1}{\df_{\max}^2(\w)}\RQ$
represent the dependence of $\sigma(\widetilde{\mathcal{P}}^{-1}\mathscr{S})$ on the random field, and in particular the spectrum spreads as $\EQ\LQ \frac{1}{\df_{\min}^2(\w)}\RQ$ tends to zero.
Estimates on the moments of $\df_{\max}(\w)$ and $\frac{1}{\df_{\min}(\w)}$ are available, e.g., in \cite{Charrier,bonizzoni2013analysis} for the log-normal random field, 
\begin{equation}\label{eq:log-normal_field}
\df_L(x,\w)=e^{g(x,\w)}=e^{\sigma \sum_{j=1}^\infty\sqrt{\lambda_j}b_j(x)N_j(\w)},
\end{equation}
where $g(x,\w)$ is a Gaussian field with covariance function $\text{Cov}_g(x,y)$, $(b_j(x),\sigma^2\lambda_j)$ are the eigenpairs of $\mathcal{T}:L^2(D) \rightarrow L^2(D)$, $(\mathcal{T}f)(x):=\int_D \text{Cov}_g(x,y) f(y)dy$, and $N_j(\w)\sim \mathcal{N}(0,1)$.
Assuming that $b_j(x)$ are H\"{o}lder continuous with exponent $0<\alpha\leq 1$ $\forall j\geq 1$, and that $R_\alpha:=\sum_{j=1}^N \lambda_j \|b_j\|_{C^{0,\alpha}(\overline{D})}<\infty$, it holds
\[\|g\|_{L^p(\Omega,C^{0,\alpha}(\overline{D}))}\leq \sqrt{R_\alpha}\sigma ((p-1)!!)^\frac{1}{p},\]
where $(p-1)!!$ is the bi-factorial. Further, using Fernique theorem, one can show \cite[Proposition 3.10]{Charrier} that
\[\left\|\frac{1}{\df_{\min}}\right\|_{L^p(\Omega)}\leq D e^{C p \sigma^2}=:B_p, \quad\text{and}\quad \|\df_{\max}\|_{L^p(\Omega)}\leq B_p,\]
where $D$ and $C$ are constants independent on $p$ and $\sigma$. The exponential dependence over $\sigma^2$ is not dramatic, as in physical situations $\sigma^2$ is usually small: for instance setting $\sigma^2=2$, one can already model random fields which vary up to four orders of magnitude inside the domain.

To better understand the behaviour of the $N_h$, $\beta$-dependent, eigenvalues of $\widetilde{S}^{-1}S$, we consider two different random models and corresponding OCP \eqref{eq:OCP}. The first one is a log-normal random diffusion field with $\text{Cov}_g(x,y)=\sigma^2e^{\frac{-\|x-y\|_2^2}{L^2}}$, where $L$ is the correlation length. With $L=0.5$, retaining the first $M=3$ components in \eqref{eq:log-normal_field} is enough to preserve $99\%$ of the variance. 
The second random field is defined as
\begin{equation}\label{eq:bounded_random_field}
\df_B(x,y,\mathbf{\xi}):=1+\text{exp}(\sigma^2(\xi_1\cos(1.1\pi x)+\xi_2 \cos(1.2\pi x)+\xi_3\sin(1.3\pi y)+\xi_4\sin(1.4\pi y))),
\end{equation}
where $\mathbf{\xi}_i(\w) \sim \mathcal{U}([-1,1])$, $i=1,\dots,4$, and independent. We remark that $1\leq \df_B(x,y,\mathbf{\xi}(\w))\leq 1+\text{exp}(4\sigma^2)$ for all $\w \in\Omega$, thus \eqref{eq:bounded_random_field} is a uniformly bounded random field. 
We discretize the probability space using SCM with a tensorized Gauss-Hermite quadrature, for the log-normal field, and a tensorized Gauss-Legendre quadrature for the bounded random field. The number of nodes for each component is denoted with $m$.

Table \ref{Tab:Eig_wathen} shows the behaviour of smallest and largest real eigenvalues of $\widetilde{S}^{-1}S$ for different values of $\beta$, $\sigma^2$ and $m$.
As Theorem \ref{thm:Ptbounds} predicts, on the one hand, $\sigma(\widetilde{S}^{-1}S)$ is definitely spread for small values of $\beta$, on the other hand, the spectrum is well clustered for $\beta$ large. The random field \eqref{eq:bounded_random_field} is bounded from below by one, thus the constant $\EQ\LQ\frac{1}{\df_{\min}^2(\w)}\RQ$ does not deteriorate as $\sigma^2$ increases, and this results in a $\sigma(\widetilde{S}^{-1}S)$ which is bounded uniformly with respect to $\sigma^2$. The log-normal field shows instead a weak dependence on $\sigma^2$ as $\EQ\LQ\frac{1}{\df_{\min}^2(\w)}\RQ$ gets larger as $\sigma^2$ increases. The third subtable shows that the preconditioner is robust against the number of discretization points in the probability space, as expected, since 
the estimates of Theorem \ref{thm:Ptbounds} do not involve pointwise quantities such as, e.g., $\min_{\w}\df_{\min}(\w)$, but rely on empirical expectations which converge to finite quantities as $m$ increases.

Finally, we remark that $\widetilde{S}$ has favourable properties from the implementational point of view.
The major cost when applying $\widetilde{S}^{-1}$ is the matrix-vector multiplication between $A^{-1}$ and a vector $v$, which is commonly approximated using a spectrally equivalent preconditioner $\widehat{A}^{-1}$.
Due to its block diagonal structure, one can compute $\widehat{A}^{-1}v$ embarrassingly in parallel and in distributed way on a cluster. Each single node only needs to store an optimal preconditioner for one/few random matrices $A_{\w_i}$ which are readily available from the domain decomposition and multigrid literature. Further, the multiplication with the matrix $M_\gamma$ can be similarly performed in parallel if $\gamma=0$, and thus the action of the whole preconditioner $\widetilde{P}^{-1}$ is fully parallelizable. In contrast, if $\gamma\neq 0$, all nodes must communicate once at each application of $\widetilde{S}^{-1}$, as one needs to compute the expectation of $\widehat{A}^{-1}v$. However, the major cost of $\widetilde{S}^{-1}$, that is the application of $\widehat{A}^{-1}$ onto a vector, can still be performed in parallel.
\begin{table}[h]\caption{Smallest and largest real and positive eigenvalues $\lambda_{\min},\lambda_{\max}$ of $\widetilde{S}^{-1}S$ for several values of $\beta,\sigma^2$ and $m$.}\label{Tab:Eig_wathen}
\small
\centering
\begin{tabular}{| c | c | c | c | c|}
\hline
$\beta$ &  $ 10^{-2}$ & $10^{-4}$ &  $10^{-6}$ & $10^{-8}$\\ \hline
$\df_B(x,\w)$ & 1 - 1.06 &  1 - 7.12 &  1 - 613 & 1 - 61263  \\
$\df_L(x,\w)$ & 1 - 1.46  &  1 - 47.64 & 1 - 4.6e3 & 1 - 4.66e5  \\
\hline
\end{tabular}\\
\centering
$N_h=225$, $m=3$, $\sigma^2=0.5$, $\gamma=0.1$, $L^2=0.5$.\\\vspace{0.3cm}
\centering
\begin{tabular}{| c | c | c | c | c|}
\hline
$\sigma^2$ &  $ 0.1$ & $0.5$ &  $1$ & $1.5$\\ \hline
$\df_B(x,\w)$ & 1 - 1.06 &  1 - 1.06 &  1 - 1.05 & 1 - 1.05  \\
$\df_L(x,\w)$ & 1 - 1.28  &  1 - 1.46 & 1 - 1.83& 1 - 2.44  \\
\hline
\end{tabular}\\
\centering
$N_h=225$, $m=3$, $\beta=10^{-2}$, $\gamma=0.1$, $L^2=0.5$.\\\vspace{0.3cm}
\centering
\begin{tabular}{| c | c | c | c | c|}
\hline
$m$ &  $ 2$ & $3$ &  $4$ & $5$\\ \hline
$\df_B(x,\w)$ & 1 - 1.06 &  1 - 1.06 &  1 - 1.06 & 1 - 1.06  \\
$\df_L(x,\w)$ & 1 - 1.42  &  1 - 1.46 & 1 - 1.47 & 1 - 1.47  \\
\hline
\end{tabular}\\
\centering
$N_h=225$, $\sigma^2=0.5$, $\beta=10^{-2}$, $\gamma=0.1$, $L^2=0.5$.
\end{table}

\begin{remark}
As $\widetilde{\mathcal{P}}^{-1}\mathscr{S}$ has ``only'' $2N_h\lll (2N+1) N_h$ $\beta$-dependent eigenvalues, one could consider a preconditioned deflated-augmented Krylov method, see e.g. \cite{chapman1997deflated,olshanskii2010acquired}, to still obtain a $\beta$-robust convergence using $\widetilde{\mathcal{P}}$ as a preconditioner.
The key idea of a deflated-augmented Krylov method is to build a subspace $W$ which approximates the eigenspace of $\widetilde{\mathcal{P}}^{-1}\mathscr{S}$ associated to those eigenvalues responsible for the slow convergence of the standard Krylov method. Then, the solution is computed in an enhanced subspace consisting in the span of $W$ and in the Krylov subspace generated by the deflated matrix $\mathscr{S}-\mathscr{S}W\left(W^\top\mathscr{S}W\right)^{-1}W^\top\mathscr{S}$.
Due to Lemma \ref{thm:Pt}, the $2N_h$, $\beta$-dependent, eigenvalues of $\widetilde{\mathcal{P}}^{-1}\mathscr{S}$ are related to $N_h$ bad eigenvalues of $\widetilde{S}^{-1}S$. Hence, one could consider a deterministic or randomized subspace method to approximate the bad eigenspace $V$ of $\widetilde{S}^{-1}S$, and then use Lemma \ref{thm:Pt} to recover $W$.
However, it is usually suggested to store $W$ and $W^\top \mathscr{S} W$ for efficiency purposes, but in our setting the storage of the matrix $W$ is prohibitive, as it is a fully dense matrix of size $N_h(2N+1)\times 2\ell$, where $\ell\leq N_h$ is the number of eigenvectors of $\widetilde{S}^{-1}S$ to approximate. 
On the other hand, the action of $W$ can still be computed implicitly, if one stores the much smaller matrix $V\in \setR^{N_h\times \ell}$, containing the discretization of the first $\ell$ eigenvectors of $\widetilde{S}^{-1}S$. Each application of $W$ or $W^\top$ would then require to invert $C$.
A further bottleneck is the solution of the linear system with matrix $W^\top\mathscr{S}W\in \setR^{2\ell \times 2\ell}$, as Krylov methods needs ad-hoc preconditioner to solve this small linear system. 

Hence, the Deflated-Augmented Krylov approach space seems attractive only if one needs to approximate a small number $\ell$ of eigenvectors of $\widetilde{S}^{-1}S$, so that the storage of $W$ and of $W^\top\mathcal{S} W$ is feasible. Lemma \ref{thm:spectrumStildeinvS} shows that $N_h$ eigenvalues are $\beta$-dependent and numerically we remarked that all $N_h$ eigenvectors should be well approximated for $\beta \ll 1$. As the method does not scale well with $N_h$, we do not consider it further. 
\end{remark}
\subsection{Matching Schur complement technique}\label{sec:matchschur}
To get robust convergence with respect to $\beta$, we consider the matching Schur complement technique, proposed in \cite{pearson2012new,pearson2018matching} for deterministic OCP, which consists in looking for a preconditioner $\widehat{S}$ of the Schur complement factorized as
\begin{equation*}\label{eq:matching}
\begin{aligned}
&\widehat{S}=(A+\alpha \widehat{X})M_\gamma^{-1}(A+\alpha \widehat{X}^\top)=AM_\gamma^{-1}A  +\alpha^2 \widehat{X} M_{\gamma}^{-1}\widehat{X}^\top +\alpha \widehat{X} M_\gamma^{-1} A +\alpha A M_\gamma^{-1} \widehat{X}^\top,
\end{aligned}
\end{equation*} 
where $\alpha\in \setR$, $\widehat{X}\in \setR^{N\cdot N_h\times N\cdot N_h}$ are chosen such that $\alpha^2 X M_\gamma^{-1} \widehat{X}^\top=\frac{1}{\beta} Z \unob M_s\unob^\top Z.$ In other words, $\widehat{S}$ is equal to $S$, once the cross terms $\alpha \widehat{X} M_\gamma^{-1} A $ and $\alpha A M_\gamma^{-1} \widehat{X}^\top$ are neglected. For few simple deterministic OCP, it has been proven that this approximation is sufficient to obtain $\beta$ robustness \cite{pearson2012new,pearson2018matching}, without essentially increasing the computational cost compared to the approximation $S\approx A_sM^{-1}_sA_s$. Nevertheless, theoretical results are not available for several problems, see e.g. \cite{liu2020parameter,pearson2017fast}, despite the improved $\beta$ robustness has been confirmed by numerical examples. In this subsection, we apply this technique to our model problem and we partially characterize the spectrum of the preconditioned Schur complement. Finally we present numerical experiments confirming the improved $\beta$ robustness, and discuss the additional computational costs compared to $\widetilde{S}$.

Defining $\alpha:=\frac{1}{\sqrt{\beta}}$ and $\widehat{X}:=Z \unob M_s \unob^\top Z$, a direct calculation shows
\begin{equation*}\label{eq:plr}
\begin{aligned}
&\alpha^2\widehat{X}M_\gamma^{-1}\widehat{X}^\top=\alpha^2 Z  \unob M_s\unob^\top Z\left(\frac{Z^{-1}}{1+\gamma}+\frac{\gamma}{1+\gamma}\unob\unob^\top\right)M^{-1}Z\unob M_s\unob^\top Z=\\
&\alpha^2 Z\unob M_s\unob^\top Z\left(\frac{Z^{-1}}{1+\gamma}+\frac{\gamma}{1+\gamma}\unob\unob^\top\right)Z\unob\unob^\top Z= \alpha^2 Z \unob M_s \left(\frac{\unob^\top Z\unob}{1+\gamma}+\frac{\gamma(\unob^\top Z\unob)^2}{1+\gamma}\right)\unob^\top Z\\
&= \frac{1}{\beta} Z \unob M_s\unob^\top Z.  
\end{aligned}
\end{equation*} 
where we used $M^{-1}Z\unob M_s \unob^\top =Z\unob\unob^\top$ as $M$ has constant diagonal blocks and $M^{-1}Z=ZM^{-1}$, and $\unob^\top Z\unob=1$. Note further that $\widehat{X}^\top=\widehat{X}$. We thus study the preconditioner Schur complement $\SLR$ and the associated preconditioner $\PLR$, 
\begin{equation}\label{eq:SLR}
\SLR:=\left(A+\frac{1}{\sqrt{\beta}} Z \unob M_s \unob^\top Z \right)M_\gamma^{-1} \left(A+\frac{1}{\sqrt{\beta}}Z \unob M_s \unob^\top Z\right),\quad \PLR:=\begin{pmatrix}
C & 0\\
0 & \SLR\end{pmatrix}
\end{equation}
which is symmetric and positive definite. The subscript $LR$ stands for \textit{Low-Rank}, as both parenthesis \eqref{eq:SLR} involve a low-rank perturbation.
We partially characterize the spectrum of $\SLR^{-1}S$ in the following theorem.
\begin{theorem}[Spectrum of $\SLR^{-1}S$]\label{thm:spectrum_SLR}
The matrix $\SLR^{-1}S$ has the eigenvalue $\lambda=1$ with geometric multiplicity equal to $(N-2)N_h$.
\end{theorem}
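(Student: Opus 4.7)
The plan is to exploit the fact that $\SLR$ was designed to match $S$ up to two cross terms of controlled rank, so $\SLR^{-1}S$ differs from the identity only in a small-dimensional subspace.

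First, I would expand the product defining $\SLR$ and subtract $S$ to obtain explicitly
\[
\SLR - S \;=\; \tfrac{1}{\sqrt{\beta}}\bigl(A M_\gamma^{-1} Z\unob M_s\unob^\top Z \;+\; Z\unob M_s\unob^\top Z\, M_\gamma^{-1} A\bigr),
\]
since the diagonal $AM_\gamma^{-1}A$ terms and the $\tfrac{1}{\beta}Z\unob M_s\unob^\top Z$ terms cancel (the latter by the matching identity already proved just above the statement, namely $\alpha^2 \widehat{X} M_\gamma^{-1}\widehat{X}^\top = \tfrac{1}{\beta}Z\unob M_s\unob^\top Z$). This is a routine expansion and is the only computational step.

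Next, I would observe that each of the two summands in $\SLR - S$ has rank at most $N_h$, because the factor $Z\unob M_s\unob^\top Z = (Z\unob) M_s (Z\unob)^\top$ is a product that factors through $\setR^{N_h}$, so $\mathrm{rank}(Z\unob M_s\unob^\top Z) \le N_h$, and multiplication on one side by $AM_\gamma^{-1}$ or $M_\gamma^{-1}A$ does not increase the rank. Consequently $\mathrm{rank}(\SLR - S) \le 2N_h$.

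Finally, since $\SLR$ is invertible (being symmetric positive definite),
\[
\SLR^{-1}S - I \;=\; \SLR^{-1}(S - \SLR)
\]
has rank at most $2N_h$ as well. The geometric multiplicity of the eigenvalue $\lambda=1$ of $\SLR^{-1}S$ is $\dim\ker(\SLR^{-1}S - I) \ge N\cdot N_h - 2N_h = (N-2)N_h$, which is the claim. There is no real obstacle here beyond carefully carrying out the cancellation in the first step; the rank counting is immediate once the difference is written in the form above.
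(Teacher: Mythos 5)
Your proof is correct and reaches the conclusion by a genuinely different, more compact route than the paper. The paper works with the generalized eigenvalue problem $S\vb=\lambda\SLR\vb$: it first simplifies the cross terms via the identity $\unob^\top MZM_\gamma^{-1}A=\unob^\top A$, obtaining $\SLR-S=\tfrac{1}{\sqrt{\beta}}\bigl(Z\unob\unob^\top A+A\unob\unob^\top Z\bigr)$, and then exhibits an explicit subspace of eigenvectors, namely $\mathcal{H}\cap\mathcal{K}$ with $\mathcal{H}=\{\vb\in\setR^{N\cdot N_h}:\unob^\top Z\vb=0\}$ and $\mathcal{K}=\{\vb\in\setR^{N\cdot N_h}:\unob^\top A\vb=0\}$, whose dimension it asserts to be $(N-2)N_h$. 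You instead keep the difference in its unsimplified form $\SLR-S=\tfrac{1}{\sqrt{\beta}}(AM_\gamma^{-1}\widehat{X}+\widehat{X}M_\gamma^{-1}A)$ with $\widehat{X}=Z\unob M_s\unob^\top Z$, bound its rank by $2N_h$, and invoke rank--nullity. Both arguments rest on the same two facts (the matching identity that cancels the $\tfrac{1}{\beta}$-term, and the rank-$N_h$ structure of $\widehat{X}$), and they are dual to one another: since $\SLR$ is invertible, the eigenspace of $\lambda=1$ is exactly $\ker(\SLR-S)$, which contains $\mathcal{H}\cap\mathcal{K}$. What the paper's version buys is a concrete, checkable description of (a large part of) that eigenspace, potentially useful for deflation; what yours buys is brevity and robustness, since the rank count needs no manipulation of $M_\gamma^{-1}$.

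One caveat, which applies to the paper's proof just as much as to yours: your final step establishes that the geometric multiplicity is \emph{at least} $(N-2)N_h$, whereas the theorem asserts equality. The paper's argument has the same limitation, since it only shows that the eigenspace contains $\mathcal{H}\cap\mathcal{K}$ (and the claim $\dim(\mathcal{H}\cap\mathcal{K})=(N-2)N_h$ itself is stated without proof). Equality in both arguments amounts to the rank bound being attained: writing $a=\unob^\top A\vb$ and $b=\unob^\top Z\vb$, one checks blockwise (the weights $\zeta_i$ being positive) that $\vb\in\ker(\SLR-S)$ iff $a+A_{\w_i}b=0$ for every $i$, so the multiplicity exceeds $(N-2)N_h$ precisely when some nonzero pair $(a,b)$ satisfies $A_{\w_i}b=-a$ for all $i$, i.e.\ when $\bigcap_{i<j}\ker(A_{\w_i}-A_{\w_j})\neq\{0\}$. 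For distinct random stiffness matrices this is non-generic, so your proof is at the same level of rigor as the paper's own.
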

\begin{proof}
To study the spectrum of $\SLR^{-1}S$ we consider the generalized eigenvalue problem $S\vb=\lambda\SLR \vb$, and we define the subspaces $\mathcal{H}:=\left\{\vb\in \setR^{N\cdot N_h}: \unob^\top Z \vb=\sum_{j=1}^N\zeta_j \vb_j=0\right\}$, and\\
 $\mathcal{K}:=\left\{\vb\in\setR^{N\cdot N_h}: \unob^\top  A \vb=\sum_{j=1}^N \zeta_j A_{\w_j}\vb_j= 0\right\}.$ Both $\mathcal{H}$ and $\mathcal{K}$ have dimension $(N-1)N_h$, and their intersection $\mathcal{H}\cap \mathcal{K}$ has dimension $(N-2)N_h$. We claim that any $\vb \in \mathcal{H}\cap \mathcal{K}$ satisfies $S\vb=1\cdot \SLR\vb$ and thus it is an eigenvector of $\SLR^{-1}S$ associated to $\lambda=1$. Indeed,
\begin{equation*}
\begin{aligned}
&S\vb=(AM_\gamma^{-1}A+\frac{1}{\beta }Z\unob M_s\unob^\top Z)\vb\\
&=(AM_\gamma^{-1}A+\frac{1}{\beta} Z\unob M_s\unob^\top Z +\frac{1}{\sqrt{\beta}}\left(Z\unob\unob^\top MZM_\gamma^{-1}A + AM_\gamma^{-1} ZM\unob\unob^\top Z ) \right)\vb\\
&=(AM_\gamma^{-1}A+\frac{1}{\beta} Z\unob M_s\unob^\top Z +\frac{1}{\sqrt{\beta}}\left(Z\unob\unob^\top A + A\unob\unob^\top Z ) \right)\vb= \SLR \vb,
\end{aligned}
\end{equation*}
where we used the equality
\[\unob^\top MZ M_\gamma^{-1}A=\unob^\top MZ \left(\frac{1}{1+\gamma} Z^{-1}+\frac{\gamma}{1+\gamma}\unob\unob^\top\right)M^{-1}A=\unob^\top A.\]
\end{proof}
Theorem \ref{thm:spectrum_SLR} guarantees that $\SLR^{-1}S$ has $(N-2)N_h$ eigenvalues equal to 1, but does not provide estimates for the remaining $2N_h$ eigenvalues. To further analyse the spectrum of $\SLR^{-1}S$, let us define $\widetilde{X}:=M_\gamma^{-\frac{1}{2}}A$ and $\widetilde{Y}:=\frac{1}{\sqrt{\beta}}M_\gamma^{-\frac{1}{2}}Z\unob M_s \unob^\top Z$. $\widetilde{X}$ is an invertible matrix, while $\widetilde{Y}$ has rank $N_h$. Algebraic manipulations show that
\[S= \widetilde{X}^\top \widetilde{X} + \widetilde{Y}^\top \widetilde{Y}\quad \text{and}\quad \SLR=(\widetilde{X}+\widetilde{Y})^\top(\widetilde{X}+\widetilde{Y}).\]
To get a lower bound on $\sigma(\SLR^{-1}S)$, we rely on the following theorem.
\begin{theorem}[Theorem 1,\cite{pearson2017fast}]\label{thm:spectrum_SLR_below}
Let $K$ and $\widetilde{K}$ be generic invertible matrices satisfying
\[K= \widetilde{X}^\top \widetilde{X} + \widetilde{Y}^\top \widetilde{Y}\quad \text{and}\quad \widetilde{K}=(\widetilde{X}+\widetilde{Y})^\top(\widetilde{X}+\widetilde{Y}),\]
with real $\widetilde{X}$ and $\widetilde{Y}$. Then the eigenvalues of $\widetilde{K}^{-1}K$ are real and greater than $\frac{1}{2}$.
\end{theorem}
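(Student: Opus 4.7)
The plan is to reduce the question to showing that $2K - \widetilde{K}$ is positive semidefinite, and then deduce the eigenvalue bound via a standard Rayleigh quotient argument for the generalized eigenvalue problem. The starting observation is that $\widetilde{K} = (\widetilde{X}+\widetilde{Y})^\top(\widetilde{X}+\widetilde{Y})$ is symmetric positive semidefinite, and invertibility then promotes it to symmetric positive definite; likewise $K = \widetilde{X}^\top\widetilde{X} + \widetilde{Y}^\top\widetilde{Y}$ is symmetric and invertible, hence symmetric positive definite. Because $\widetilde{K}$ is SPD, the generalized eigenvalue problem $K\mathbf{v} = \lambda\widetilde{K}\mathbf{v}$ has a full set of real eigenvalues, which can be characterized through Rayleigh quotients of the form $\lambda = \mathbf{v}^\top K\mathbf{v}/\mathbf{v}^\top\widetilde{K}\mathbf{v}$.

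The second step is the key algebraic identity. I would expand
\begin{equation*}
2K - \widetilde{K} = 2(\widetilde{X}^\top\widetilde{X} + \widetilde{Y}^\top\widetilde{Y}) - (\widetilde{X}+\widetilde{Y})^\top(\widetilde{X}+\widetilde{Y}) = \widetilde{X}^\top\widetilde{X} - \widetilde{X}^\top\widetilde{Y} - \widetilde{Y}^\top\widetilde{X} + \widetilde{Y}^\top\widetilde{Y} = (\widetilde{X}-\widetilde{Y})^\top(\widetilde{X}-\widetilde{Y}),
\end{equation*}
which is visibly positive semidefinite. In particular $\mathbf{v}^\top(2K-\widetilde{K})\mathbf{v} \geq 0$ for every $\mathbf{v}$, so $2\mathbf{v}^\top K\mathbf{v} \geq \mathbf{v}^\top\widetilde{K}\mathbf{v}$. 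Dividing by the positive quantity $\mathbf{v}^\top\widetilde{K}\mathbf{v}$ (strictly positive since $\widetilde{K}$ is SPD and $\mathbf{v}\neq 0$) immediately yields $\lambda \geq \tfrac{1}{2}$ for every eigenpair $(\lambda,\mathbf{v})$.

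The main obstacle is upgrading $\lambda \geq \tfrac{1}{2}$ to the strict inequality $\lambda > \tfrac{1}{2}$ stated in the theorem. Equality in the Rayleigh quotient bound forces $\mathbf{v}^\top(\widetilde{X}-\widetilde{Y})^\top(\widetilde{X}-\widetilde{Y})\mathbf{v} = 0$, i.e.\ $(\widetilde{X}-\widetilde{Y})\mathbf{v} = 0$, equivalently $\widetilde{X}\mathbf{v} = \widetilde{Y}\mathbf{v}$. My plan here is to exploit the asymmetry between $\widetilde{X}$ and $\widetilde{Y}$: since $\widetilde{K}$ and $K$ are assumed invertible, I would argue that a nonzero $\mathbf{v}$ with $\widetilde{X}\mathbf{v} = \widetilde{Y}\mathbf{v}$ is incompatible with $\mathbf{v}$ being an eigenvector of $\widetilde{K}^{-1}K$ (chasing the eigenvalue identity $2K\mathbf{v} = \widetilde{K}\mathbf{v}$ under the constraint $\widetilde{X}\mathbf{v} = \widetilde{Y}\mathbf{v}$ and using invertibility of $K$ or $\widetilde{X}+\widetilde{Y}$ to rule out a nontrivial solution). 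If a structural obstruction cannot be extracted from the mere invertibility hypothesis, the conclusion has to be read as $\lambda \geq \tfrac{1}{2}$, which is the bound actually needed for the preconditioning application; I would then state the result accordingly and refer to \cite{pearson2017fast} for the strict version under their additional assumptions.
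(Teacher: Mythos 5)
The paper never proves this statement: it is quoted verbatim from \cite{pearson2017fast} and used as a black box, so there is no internal proof to compare yours against. Judged on its own, your argument is correct and is in fact the standard proof of this bound. Both $K$ and $\widetilde{K}$ are symmetric, positive semidefinite by construction, and invertible, hence SPD; the eigenvalues of $\widetilde{K}^{-1}K$ coincide with those of the generalized problem $K\mathbf{v}=\lambda\widetilde{K}\mathbf{v}$ and are therefore real; and your identity $2K-\widetilde{K}=(\widetilde{X}-\widetilde{Y})^\top(\widetilde{X}-\widetilde{Y})\succeq 0$ gives $\lambda\geq\tfrac12$ through the Rayleigh quotient.

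Your hesitation about the strict inequality is also the mathematically correct call, and you should not regard it as a gap in your proof: the strict bound ``$>\tfrac12$'' simply does not follow from the stated hypotheses. Take $\widetilde{X}=\widetilde{Y}=I$; then $K=2I$ and $\widetilde{K}=4I$ are both invertible, yet $\widetilde{K}^{-1}K=\tfrac12 I$ has every eigenvalue equal to $\tfrac12$. Your equality analysis is sharp: if $\widetilde{X}\mathbf{v}=\widetilde{Y}\mathbf{v}=\mathbf{w}$ (and $\mathbf{w}\neq 0$ by invertibility of $K$), then $\widetilde{K}\mathbf{v}=(\widetilde{X}+\widetilde{Y})^\top(2\mathbf{w})=2K\mathbf{v}$, so such a $\mathbf{v}$ \emph{is} an eigenvector with eigenvalue exactly $\tfrac12$ and no contradiction with invertibility can be extracted. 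The provable statement is $\lambda\geq\tfrac12$, with equality precisely on $\ker(\widetilde{X}-\widetilde{Y})$; presumably the word ``generic'' in the quoted theorem is intended to exclude this coincidence (in the paper's application $\widetilde{X}=M_\gamma^{-1/2}A$ is invertible and $\widetilde{Y}$ is low-rank, so equality would require $1\in\sigma(\widetilde{X}^{-1}\widetilde{Y})$, a non-generic event). Your fallback reading is moreover all that is used downstream: the paper itself describes the spectrum of $\SLR^{-1}S$ as lying in the closed interval $[\tfrac12,1]$ in the discussion following Theorem \ref{thm:spectrum_SLR_below}.
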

To estimate an upper bound for $\sigma(\SLR^{-1}S)$, one could consider the Raleigh quotient
\[R(\vb):=\frac{\vb^\top S \vb}{\vb^\top \SLR \vb}=\frac{\vb^\top \widetilde{X}^\top \widetilde{X}\vb+\vb^\top \widetilde{Y}^\top \widetilde{Y}\vb}{\vb^\top \widetilde{X}^\top \widetilde{X} \vb+\vb^\top \widetilde{Y}^\top \widetilde{Y} \vb+\vb^\top \widetilde{X}^\top \widetilde{Y}\vb +\vb^\top \widetilde{Y}^\top \widetilde{X}\vb}.\]
For $\vb\in \text{Ker} \widetilde{Y}$, we have $R(\vb)=1$. Taking $\vb\notin \text{Ker} \widetilde{Y}$ and dividing numerator and denominator by $\vb^\top \left(\widetilde{X}^\top \widetilde{X} + \widetilde{Y}^\top \widetilde{Y}\right)\vb\neq 0$, one obtains
\begin{equation}\label{eq:quotien_Raileigh}
R(\vb)=\frac{1}{1+\frac{\vb^\top \left( \widetilde{X}^\top \widetilde{Y} +\widetilde{Y}^\top \widetilde{X}\right)\vb}{\vb^\top \left(\widetilde{X}^\top \widetilde{X} +\widetilde{Y}^\top \widetilde{Y}\right) \vb}}.
\end{equation}
For a deterministic OCP \cite{pearson2012new}, $\widetilde{X}=M_s^{-\frac{1}{2}}A_s$ and $\widetilde{Y}= \frac{1}{\sqrt{\beta}}M_s^{\frac{1}{2}}$ so that  $\widetilde{X}^\top \widetilde{Y} +\widetilde{Y}^\top \widetilde{X}=\frac{2}{\sqrt{\beta}}A_s$, is positive definite, thus \eqref{eq:quotien_Raileigh} implies $R(\vb)\leq 1$, hence $\sigma(\widehat{S}^{-1}S)\subset \LQ\frac{1}{2},1\RQ$.
Unfortunately in our case, like in \cite{liu2020parameter,pearson2017fast}, $\widetilde{X}^\top \widetilde{Y} +\widetilde{Y}^\top \widetilde{X}=\frac{1}{\sqrt{\beta}}\left(Z\unob \unob^\top A + A\unob\unob^\top Z\right)$ is indefinite.
Numerically, we have observed that $\SLR^{-1}S$ has $N_h$ eigenvalues in the interval $[\frac{1}{2},1]$ and the remaining $N_h$ are slightly larger than 1. We refer to Tables \ref{Tab:Eig_matching} for a further discussion.

\subsubsection{Mean and Chebyshev semi-iterative approximations}
The application of $\SLR^{-1}$ requires to invert the symmetric and positive definite matrix $\left(A+\frac{1}{\sqrt{\beta}} Z\unob M_s \unob^\top Z\right)$, which consists in a full-rank matrix plus a low-rank perturbation. 
To do so, we use the Woodbury identity
\begin{equation}\label{eq:Woodbury}
\begin{aligned}
\left(A+\frac{1}{\sqrt{\beta}}Z \unob M_s \unob^\top Z\right)^{-1}&=\left(A+Z\unob \frac{1}{\sqrt{\beta}} M_s\unob^\top Z\right)^{-1}=A^{-1}\left(I-Z\unob\LQ \sqrt{\beta}M_s^{-1} +\unob^\top Z A^{-1} Z\unob\RQ^{-1}\unob^\top Z A^{-1}\right)\\
&=A^{-1}\left(I-Z\unob\LQ I +\frac{1}{\sqrt{\beta}}M_s\unob^\top Z A^{-1}Z\unob\RQ^{-1}\frac{1}{\sqrt{\beta}}M_s\unob^\top Z A^{-1}\right).
\end{aligned}
\end{equation}
Unfortunately, \eqref{eq:Woodbury} is of no practical use as it requires to solve a linear system with $L:=\LQ I +\frac{1}{\sqrt{\beta}}M_s\unob^\top Z A^{-1}Z\unob\RQ$, which involves $\unob^\top ZA^{-1}Z\unob=\sum_{i=1}^N \zeta_i A_{\w_i}^{-1}$.
To make the approach feasible, we propose two different approximations.

The first one is based on the mean approximation $\sum_{i=1}^N \zeta_i A_{\w_i}^{-1}\approx  A_0^{-1}$, that is we replace the sum of the inverses with the inverse of the mean matrix $A_0=\sum_{i=1}^N \zeta_i A_{\w_i}$.
Then,
\begin{equation}\label{eq:Woodbury_mean}
\begin{aligned}
\left(A+\frac{1}{\sqrt{\beta}} Z \unob M_s \unob^\top Z\right)^{-1}&\approx A^{-1}\left(I- Z\unob\LQ I +\frac{1}{\sqrt{\beta}}M_s A_0^{-1}\RQ^{-1}\frac{1}{\sqrt{\beta}}M_s\unob^\top Z A^{-1}\right)\\
&=A^{-1}\left(I- Z\unob A_0 \LQ A_0+\frac{1}{\sqrt{\beta}}M_s\RQ ^{-1} \frac{1}{\sqrt{\beta}}M_s\unob^\top Z A^{-1}\right).
\end{aligned}
\end{equation}
We will denote with $\SLRM$ the preconditioner \eqref{eq:SLR}, where the inverse of the parenthesis are approximated through \eqref{eq:Woodbury_mean}, and the associated preconditioner by 
\begin{equation}\label{eq:PLRM}
\PLRM:=\begin{pmatrix}
C& 0\\
0 & \SLRM
\end{pmatrix}.
\end{equation}
As for forward problems \cite{powell2009block}, this approximation is satisfactory if the variance of the random field is small, while it is definitely poor if the variance is large. 

As an alternative approximation in the large variance case, it would be tempting to use a Krylov method to approximate the inverse of $L$. However, any Krylov method is a non-linear map with respect to the right hand side and the initial vector, and thus it would lead to a non-linear preconditioner for the global saddle point system, see \cite{wathen2008chebyshev} for a detailed discussion. We propose here instead to approximate the solution of $L\vb=\mathbf{z}$ using $\Nit$ iterations of the damped preconditioned stationary iterative method that, starting from an initial guess $\vb^0$, computes
\[\vb^k=\vb^{k-1}+\alpha P_0^{-1}(\mathbf{z}-L\vb^{k-1}),\quad k=1,\dots,\Nit,\]
accelerated by the Chebyshev Semi-Iterative method, see e.g \cite[Section 10.1.5]{golub2013matrix}. The preconditioner Schur complement obtained by approximating the inverse of $L$ with such iterative procedure is denoted with $\SLRC$, and the associated preconditioner
\begin{equation}\label{eq:PLRC}
\PLRC:=\begin{pmatrix}
C & 0\\
0 & \SLRC
\end{pmatrix}.
\end{equation}
In our experiments, we set $P_0^{-1}=(I+\frac{1}{\sqrt{\beta}}M_sA_0^{-1})^{-1}= A_0\left(A_0+\frac{1}{\sqrt{\beta}}M_s\right)^{-1}$. 

The Chebyshev Semi-Iterative method requires two parameters $\underline{\lambda}$ and $\overline{\lambda}$ such that $-1<\underline{\lambda}\leq\lambda_1\leq\cdots\leq\lambda_N\leq\overline{\lambda}<1$, where $\lambda_j$ are the eigenvalues of $I-\alpha P_0^{-1}L$. Once given $\underline{\lambda}$ and $\overline{\lambda}$, the Chebyshev Semi-Iterative method computes a sequence of polynomials in the matrix which depends only on $\underline{\lambda}$ and $\overline{\lambda}$, thus the method is linear with respect to the initial guess and right hand side (if $\Nit$ is fixed) \cite{wathen2008chebyshev}. To estimate the spectrum of $(I-\alpha P_0^{-1}L)$, we rely on the following Lemma.

\begin{lemma}\label{lemma:spectrumPinvL}
The spectrum of $P_0^{-1}L$ is real and bounded from below by $1$.
\end{lemma}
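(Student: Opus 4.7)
The plan is to symmetrize the problem through a similarity transformation and then reduce the claim to a matrix Jensen-type inequality for the inverse map.

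First, I would note that both $P_0$ and $L$ share the structure $I + \frac{1}{\sqrt{\beta}} M_s X$, where $X$ is symmetric positive definite: for $L$ we have $X = \sum_{i=1}^N \zeta_i A_{\w_i}^{-1}$ (recalling $\unob^\top Z A^{-1} Z \unob = \sum_i \zeta_i A_{\w_i}^{-1}$), and for $P_0$ we have $X = A_0^{-1}$. Although $M_s X$ is not symmetric, the congruence $M_s^{-1/2}(M_s X) M_s^{1/2} = M_s^{1/2} X M_s^{1/2}$ is. Therefore $P_0^{-1}L$ is similar to $\widetilde{P}_0^{-1} \widetilde{L}$, where
\begin{equation*}
\widetilde{P}_0 := I + \tfrac{1}{\sqrt{\beta}} M_s^{1/2} A_0^{-1} M_s^{1/2}, \qquad \widetilde{L} := I + \tfrac{1}{\sqrt{\beta}} M_s^{1/2} \Bigl(\textstyle\sum_{i=1}^N \zeta_i A_{\w_i}^{-1}\Bigr) M_s^{1/2}
\end{equation*}
are both symmetric and positive definite (their eigenvalues are at least $1$). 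In particular $\widetilde{P}_0^{-1}\widetilde{L}$ is similar to the symmetric matrix $\widetilde{P}_0^{-1/2}\widetilde{L}\widetilde{P}_0^{-1/2}$, so the spectrum of $P_0^{-1}L$ is real.

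Second, the lower bound $\lambda \geq 1$ on the spectrum will follow once I establish $\widetilde{L} \succeq \widetilde{P}_0$ in the Loewner order, since then every Rayleigh quotient $\vb^\top \widetilde{L} \vb / \vb^\top \widetilde{P}_0 \vb \geq 1$. Clearing the common identity, this reduces to proving the matrix Jensen-type inequality
\begin{equation*}
\Bigl(\textstyle\sum_{i=1}^N \zeta_i A_{\w_i}\Bigr)^{-1} \preceq \sum_{i=1}^N \zeta_i A_{\w_i}^{-1},
\end{equation*}
which is the standard operator convexity of the matrix inverse on SPD matrices with $\zeta_i \geq 0$ and $\sum_i \zeta_i = 1$.

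Third, for self-containedness I would give a short direct proof of this inequality using the variational characterization $\vb^\top A^{-1} \vb = \max_{\mathbf{y}}\bigl(2\vb^\top \mathbf{y} - \mathbf{y}^\top A \mathbf{y}\bigr)$ valid for any SPD $A$. Applying it to each $A_{\w_i}$ and taking a convex combination, for any $\vb$,
\begin{equation*}
\sum_{i=1}^N \zeta_i \vb^\top A_{\w_i}^{-1} \vb = \max_{\{\mathbf{y}_i\}} \sum_{i=1}^N \zeta_i \bigl(2\vb^\top \mathbf{y}_i - \mathbf{y}_i^\top A_{\w_i} \mathbf{y}_i\bigr) \geq \max_{\mathbf{y}}\bigl(2\vb^\top \mathbf{y} - \mathbf{y}^\top A_0 \mathbf{y}\bigr) = \vb^\top A_0^{-1}\vb,
\end{equation*}
where the inequality uses the particular choice $\mathbf{y}_i \equiv \mathbf{y}$ inside the maximum.

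The only genuinely conceptual step is recognizing that the non-symmetric matrix pencil $(L, P_0)$ can be brought to a symmetric SPD pencil by the $M_s^{1/2}$ congruence; after that, the bound $\lambda \geq 1$ is exactly the operator Jensen inequality and is not an obstacle.
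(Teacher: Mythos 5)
Your proof is correct, and it reaches the lemma by a route that differs from the paper's in two respects, even though both arguments ultimately hinge on the very same operator Jensen inequality $A_0^{-1} \preceq \sum_{i=1}^N \zeta_i A_{\w_i}^{-1}$. For the reduction, the paper works additively: it factors out $M_s$ and writes $P_0^{-1}L = I + \bigl(M_s^{-1}+\tfrac{1}{\sqrt{\beta}}A_0^{-1}\bigr)^{-1}\tfrac{1}{\sqrt{\beta}}\bigl(\unob^\top Z A^{-1}Z\unob - A_0^{-1}\bigr)$, so that the conclusion follows from the fact (left implicit in the paper) that the product of an SPD matrix with a symmetric positive semidefinite matrix has real nonnegative eigenvalues; you instead symmetrize the whole pencil by the $M_s^{1/2}$ similarity and invoke the Loewner comparison $\widetilde{L}\succeq \widetilde{P}_0$. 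The two reductions are mechanically different but cousins: the paper's implicit product fact is itself proved by exactly the kind of square-root similarity you make explicit, so your write-up exposes a step the paper glosses over. For the key inequality, the paper cites the convexity of $A \mapsto \vb^\top A^{-1}\vb$ on SPD matrices (Whittle's lemma) and applies scalar Jensen's inequality, whereas you prove it from scratch via the variational characterization $\vb^\top A^{-1}\vb = \max_{\mathbf{y}}\bigl(2\vb^\top \mathbf{y}-\mathbf{y}^\top A\mathbf{y}\bigr)$; this makes your argument self-contained at the cost of a few extra lines, and both versions use, as you correctly note, that the quadrature weights are nonnegative and sum to one. One last remark in your favor: your non-strict bound $\lambda \geq 1$ matches the lemma as stated, whereas the paper's proof asserts $\lambda_{\min}>1$, which is too strong when $\sum_{i=1}^N \zeta_i A_{\w_i}^{-1}-A_0^{-1}$ is singular (e.g., if all $A_{\w_i}$ coincide).
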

\begin{proof}
Algebraic manipulations lead to
\begin{align*}
P_0^{-1}L&=\left(M_s^{-1}+\frac{1}{\sqrt{\beta}}A_0^{-1}\right)^{-1}\left(M_s^{-1}+\frac{1}{\sqrt{\beta}}\unob^\top Z A^{-1} Z\unob +\frac{1}{\sqrt{\beta}}A_0^{-1}-\frac{1}{\sqrt{\beta}}A_0^{-1}\right)\\
&=I+\left(M_s^{-1}+\frac{1}{\sqrt{\beta}}A_0^{-1}\right)^{-1}\frac{1}{\sqrt{\beta}}\left(\unob^\top Z A^{-1} Z\unob -A_0^{-1}\right).
\end{align*}
Hence, if $\sum_{j=1}^N \zeta_j A^{-1}_{\w_j} -A_0^{-1}$ is semi-positive definite, then $P_0^{-1}L$ has real eigenvalues and $\lambda_{\min}>1$.
To show this, take an arbitrary $0\neq\vb\in \setR^{N_h}$, and consider the map $\phi_{\vb}:S^n_{++}\to \setR$, where $S^n_{++}$ is the set of positive definite matrices in $\setR^{n\times n}$, defined as $\phi_{\vb}:= \vb^\top A^{-1}\vb$.
The map $\phi_{\vb}$ is convex \cite[Lemma 1]{Whittle}. Thus due to Jensen's inequality
\[\vb^\top \left(\sum_{j=1}^N \zeta_j A_{\w_j}^{-1}\right)\vb - \vb^\top \left(\sum_{j=1}^N \zeta_j A_{\w_j}\right)^{-1}\vb=\sum_{j=1}^N \zeta_j \phi_{\vb}(A_{\w_j})-\phi_\vb\left(\sum_{j=1}^N \zeta_j A_{\w_j}\right)\geq 0,\] 
hence, due to the arbitrariness of $\vb$, $\unob^\top Z A^{-1}Z\unob -A_0^{-1}$ is semi-positive definite. 
\end{proof}
Let $\lambda_{\min}$ and $\lambda_{\max}$ be the minimum and maximum eigenvalues of $P_0^{-1}L$. From Lemma \eqref{lemma:spectrumPinvL} follows that $\sigma(I-\alpha P_0^{-1}L)\subset [1-\alpha\lambda_{\max},1-\alpha\lambda_{\min}]$ as $P_0^{-1}L$ has real and positive spectrum. The parameter $\alpha$ is needed to guarantee the convergence of the stationary method, that is $\rho(I-\alpha P_0^{-1}L)<1$. The optimal alpha which minimizes $\rho(I-\alpha P_0^{-1}L)$ is $\alpha_\text{opt}=\frac{2}{\lambda_{\min}(P_0^{-1}L)+\lambda_{\max}(P_0^{-1}L)}$. However, $\alpha_\text{opt}$ leads to a spread spectrum, while the Chebyshev Semi-Iterative method takes advantage of clustered spectra, like Krylov methods \cite[Section 10.1.5]{golub2013matrix}. We therefore set $\alpha:=\frac{1}{1+\lambda_{\max}(P_0^{-1}L)}$, where $\lambda_{\max}(P_0^{-1}L)$ is approximated, once and for all, using few iterations of the power method. This choice guarantees the convergence of the iterative method since $\alpha\leq \frac{2}{\lambda_{\max}(P_0^{-1}L)}$.
Finally, in the Chebyshev Semi-Iterative method we take $\underline{\lambda}=1-\alpha\lambda_{\max}$ and $\overline{\lambda}=1-\alpha$. 

Table \ref{Tab:Eig_matching} shows the behaviour of the extrema eigenvalues of the Schur complement preconditioned by $\SLR$, $\SLRM$ and $\SLRC$ in different regimes.  
We stress that $\SLR$ has a very high computational cost and is of no practical use. It is included in Table \ref{Tab:Eig_matching} as a reference, in order to assess how well the approximated versions $\SLRM$ and $\SLRC$ perform, compared to $\SLR$.

From the first two tables, we observe that $\SLR$ shows a (very weak) dependence on $\beta$ and on $\sigma^2$, emphasized in the case of the log-normal field, but the spectrum still remains sufficiently clustered. 
Anyway, $\sigma(\SLR^{-1}S)$ is not contained in the interval $[\frac{1}{2},1]$, as in the deterministic case \cite{pearson2012new}.
The third table shows that $\SLR$ is robust with respect to finer discretizations of the probability space.

Let us now consider the approximations $\SLRM$ and $\SLRC$. On the one hand, the performance of $\SLRM$ is highly affected by the variance of the random field. $\SLRM$ is a valid choice for small values of the variance and for values of $\beta$ not too small. It definitely performs poorly in the remaining regimes.
On the other hand, $\SLRC^{-1}$ matches the performance of the exact preconditioner $\SLR$, both for the bounded and log-normal fields, with a small number $\Nit$ of Chebyshev semi-iterations. However, to obtain good performances, $\Nit$ has to increase as $\sigma^2$ increases, especially for the log-normal field, due to the poorer performance of $P_0$ as a preconditioner in the inner Chebyshev semi-iterations. More accurate preconditioners, which capture better the effective spectrum of $\mathbbm{1}^\top Z A^{-1}Z\mathbbm{1}$, are expected to reduce the number of inner iterations needed, or even to replace directly the mean approximation $A_0$ into \eqref{eq:Woodbury_mean}, leading to a modified $\SLRM$, and thus to reduce the overall computational time, see Section \ref{sec:num_lognormal} for further comments.

\begin{table}[h]\caption{\small{Smallest and largest real and positive eigenvalues $\lambda_{\min}-\lambda_{\max}$ of $\SLR^{-1}S$, $\SLRM^{-1}S$ and $\SLRC^{-1}S$}}\label{Tab:Eig_matching}
\centering
\small
\begin{tabular}{| c | c | c | c | c | c|}
\hline
$\beta$ &  &$ 10^{-2}$ & $10^{-4}$ &  $10^{-6}$ & $10^{-8}$\\ \hline
$\SLR^{-1}S$ &$\df_B(x,\w)$ & 0.68 - 1.00 &  0.50 - 1.02 &  0.50 - 1.17 & 0.50 - 1.30  \\
$\SLRM^{-1}S$ & $\df_B(x,\w)$ & 0.68 - 1.00 &  0.48 - 1.02 &  0.13 - 1.07 & 8.7e-3 - 30.59  \\
$\SLRC^{-1}S$ &$\df_B(x,\w)$ & 0.68 - 1.00 &  0.50 - 1.02 &  0.50 - 1.17 & 0.50 - 1.30  \\
$\SLR^{-1}S$ & $\df_L(x,\w)$ & 0.52 - 1.11  &  0.50 - 1.74 & 0.50 - 2.39& 0.52 - 2.61  \\
$\SLRM^{-1}S$ & $\df_L(x,\w)$ & 0.45 - 1.12  &  0.02 - 2.13 & 1e-4 - 7.73e2& 1.3e-5 - 8.98e4\\
$\SLRC^{-1}S$ & $\df_L(x,\w)$ & 0.52 - 1.11  &  0.50 -1.74 & 0.50 - 2.39& 0.52 - 2.61  \\
\hline
\end{tabular}
\\\centering
$N_h=225$, $m=3$, $\sigma^2=0.5$, $\gamma=0.1$, $L^2=0.5$. $\Nit=2$ for $\df_B(x,\w)$ and $\Nit=4$ for $\df_L(x,\w)$.\\\vspace{0.3cm}
\begin{tabular}{| c | c | c | c | c | c|}
\hline
$\sigma^2$ &  & $ 0.1$ & $0.5$ &  $1$ & $1.5$\\ \hline
$\SLR^{-1}S$ &$\df_B(x,\w)$ & 0.50 - 1.04 &  0.50 - 1.30 &  0.50 - 1.66 & 0.50 - 1.98\\
$\SLRM^{-1}S$ & $\df_B(x,\w)$ & 0.49 - 1.01 &  8.7e-3 - 30.59 &  3.7e-8 - 1.00e3 & 4.4e-5 - 1.03e4  \\
$\SLRC^{-1}S$ &$\df_B(x,\w)$ & 0.50 - 1.04  &  0.50 - 1.30 & 0.49 - 1.67 & 0.09 - 1.97 \\
$\SLR^{-1}S$ & $\df_L(x,\w)$ &  0.52 - 1.43  &  0.52 - 2.61 & 0.52 - 4.35&  0.52 - 6.54  \\
$\SLRM^{-1}S$ & $\df_L(x,\w)$ & 5.9e-4 - 1.52e3  &  1.3e-5 - 8.98e4 & 0.23 - 9.82e5& 0.70 - 5.87e6\\
$\SLRC^{-1}S$ & $\df_L(x
,\w)$ & 0.52 - 1.43  &  0.52 - 2.61 & 0.52 - 4.34 & 0.51 - 6.54  \\
\hline
\end{tabular}
\\
\centering
$N_h=225$, $m=3$, $\beta=10^{-8}$, $\gamma=0.1$, $L^2=0.5$. $\Nit$ is equal to $2$ for $\df_B(x,\w)$ and equal to $2,4,6,8$ for $\sigma^2=0.1,0.5,1,1.5$ respectively for $\df_L(x,\w)$.\\\vspace*{0.3cm}
\begin{tabular}{| c | c | c | c | c | c|}
\hline
$m$ &  & $ 2$ & $3$ &  $4$ & $5$\\ \hline
$\SLR^{-1}S$ &$\df_B(x,\w)$ & 0.50 - 1.17 &  0.50 - 1.17 &  0.50 - 1.17 & 0.50 - 1.17\\
$\SLRM^{-1}S$ & $\df_B(x,\w)$ & 0.13 - 1.07 &  0.13 - 1.07 &  0.13 - 1.07 & 0.13 - 1.07\\
$\SLRC^{-1}S$ &$\df_B(x,\w)$ & 0.50 - 1.17 &  0.50 - 1.17 &  0.50 - 1.17 & 0.50 - 1.17\\
$\SLR^{-1}S$ & $\df_L(x,\w)$ & 0.50 - 2.13  &  0.50 - 2.39 &   0.50 - 2.43&  0.50 - 2.43 \\
$\SLRM^{-1}S$ & $\df_L(x,\w)$ & 0.0025 - 634  &  1e-6 - 773 & 1e-4 - 784 & 2.9e-3 - 785\\
$\SLRC^{-1}S$ & $\df_L(x,\w)$ & 0.50 - 2.13 &  0.50 - 2.39 & 0.5 - 2.43& 0.5 - 2.43  \\
\hline
\end{tabular}
\\\centering
$N_h=225$, $\sigma^2=0.5$, $\beta=10^{-6}$, $\gamma=0.1$, $L^2=0.5$, $\Nit=2$ for $\df_B$ and $\Nit=4$ for $\df_L$.
\end{table}
We finally remark that both $\SLRM$ and $\SLRC$ require to invert four times (approximately and possibly in parallel) the matrix $A$ at each outer Krylov iteration, in constrast with $\widetilde{S}$ which requires to invert (approximately) $A$ only twice per iteration. There is further a synchronization step where the reduced size system involving the matrix $L$, or its mean approximation, is approximately solved.

\section{Preconditioning in a Hilbert setting}\label{sec:OP}
Another technique to develop preconditioners for parameter-dependent saddle point problems is often called ``operator preconditioning'', which has its foundation in the analysis of iterative methods in Hilbert spaces \cite{malek2014preconditioning,mardal2011preconditioning,functional_iterative}. In a nutshell, let $\mathscr{S}$ be a self-adjoint operator from $\mathcal{V}\rightarrow \mathcal{V}^\prime$, and suppose to solve the linear equation $\mathscr{S}x=f$ in $\mathcal{V}^\prime$. As $\mathscr{S}$ is a map between two different spaces, iterative methods cannot be applied, unless one identifies a isomorphism $\mathcal{R}:\mathcal{V}^\prime \rightarrow \mathcal{V}$, and consider the equivalent problem $\mathcal{R}\mathscr{S}x=\mathcal{R}f$ in $\mathcal{V}$. It is natural to choose $\mathcal{R}$ self-adjoint and positive definite, so that $\mathcal{R}^{-1}$ defines the scalar product $(x,y)_\mathcal{V}=\langle \mathcal{R}^{-1} x,y\rangle$, and $\mathcal{R}\mathscr{S}$ is still self-adjoint with respect to $(\cdot,\cdot)_\mathcal{V}$. Instead of coming up with an operator $\mathcal{R}$, one can define first a scalar product on $\mathcal{V}$, and then choose $\mathcal{R}$ equal the Riesz isomorphism such that $\langle \mathscr{S} x,y\rangle = (\mathcal{R} \mathscr{S} x, y)_\mathcal{V}$, $\|\mathcal{R} \mathscr{S}x\|_{\mathcal{V}}=\|\mathscr{S} x\|_{\mathcal{V}^\prime}$, so that
\begin{equation}\label{eq:theory_op}
\begin{aligned}
\|\mathcal{R}\mathscr{S}\|_{\mathcal{L}(\mathcal{V},\mathcal{V})}&=\sup_{0\neq x\in\mathcal{V}} \frac{\|\mathcal{R}\mathscr{S}x\|_\mathcal{V}}{\|x\|_\mathcal{V}}=\sup_{0\neq x\in\mathcal{V}} \frac{\|\mathscr{S}x\|_{\mathcal{V}^\prime}}{\|x\|_\mathcal{V}}=\|\mathscr{S}\|_{\mathcal{L}(\mathcal{V},\mathcal{V}^\prime)},\\
\|\left(\mathcal{R}\mathscr{S}\right)^{-1}\|_{\mathcal{L}(\mathcal{V},\mathcal{V})}&=\sup_{0\neq x\in \mathcal{V}} \frac{\|\left(\mathcal{R}\mathscr{S}\right)^{-1}x\|_\mathcal{V}}{\|x\|_\mathcal{V}}=\left( \inf_{0\neq x\in\mathcal{V}} \frac{\|\mathcal{R}\mathscr{S}x\|_\mathcal{V}}{\|x\|_\mathcal{V}}\right)^{-1}=\left( \inf_{0\neq x\in\mathcal{V}} \frac{\|\mathscr{S}x\|_\mathcal{V^\prime}}{\|x\|_\mathcal{V}}\right)^{-1}=\|\mathscr{S}^{-1}\|_{\mathcal{L}(\mathcal{V},\mathcal{V}^\prime)}.
\end{aligned}
\end{equation}
Hence, if one finds an appropriate, parameter-dependent, scalar product $(\cdot,\cdot)_\mathcal{V}$ (hence, a norm on $\mathcal{V}$), so that $\|\mathscr{S}\|_{\mathcal{L}(\mathcal{V},\mathcal{V}^\prime)}\leq C$ and $\|\mathscr{S}^{-1}\|_{\mathcal{L}(\mathcal{V}^\prime,\mathcal{V})}\leq \alpha$, with $C$ and $\alpha$ parameter-independent, then considering the Riesz isomorphism $\mathcal{R}$ associated to $(\cdot,\cdot)_\mathcal{V}$, one obtains using \eqref{eq:theory_op}, $\kappa (\mathcal{R}\mathscr{S})=\|\mathcal{R}\mathscr{S}\|_{\mathcal{L}(\mathcal{V},\mathcal{V})}\|\left(\mathcal{R}\mathscr{S}\right)^{-1}\|_{\mathcal{L}(\mathcal{V},\mathcal{V})}\leq C \alpha$, that is, the condition number of the the preconditioned system $\mathcal{R}\mathscr{S}$ is independent on the parameters. 
In this section, we apply this operator preconditioning paradigm to the robust optimal control \eqref{eq:OCP}, and we set the functional space of control functions $U$ 
equal to $\Y$. 

Let us consider the optimality conditions in \eqref{eq:optimality_system}.
We introduce the space $\Xhat:=\ES \times U$ and the bilinear forms
\begin{equation*}\label{eq:bilinear_forms_saddle}
\begin{aligned}
\I&:\Y^\star \rightarrow \ES^\star\quad \text{such that}\quad  \langle \I f,v\rangle :=\int_\Omega \langle  f,v(\cdot,\w)\rangle d\PP(\w)=\E\LQ \langle f,v_\w\rangle\RQ,\quad \forall v\in \ES,\\ 
\C&:\Xhat\times\Xhat\rightarrow \setR\quad \text{such that}\quad \C\left((y,u),(w,v)\right):=   \E \LQ \langle \Lambda_{L^2}((1+\gamma)y_\w-\gamma \E\LQ y_\w\RQ),w_\w\rangle\RQ + \beta \langle \Lambda_Y u,v\rangle,\\
\B&:\Xhat\times \ES \rightarrow \setR\quad \text{such that}\quad \B\left((y,u),p)\right):= \E\LQ \langle \mathcal{A}_\w y_\w, p_\w\rangle -\langle \Lambda_Y u,p_\w\rangle\RQ,
\end{aligned}
\end{equation*}
The optimality conditions can be formulated as: 
\begin{equation}\label{eq:saddle_point_cont}
\begin{array}{r l l}
\text{Find $(\xu,p)\in \Xhat\times \ES$ such that}&
\C(\xu,\ru) + \B(\ru,p)&=\langle \mathcal{F},\ru\rangle,\quad \forall \ru=(w,v)\in \Xhat,\\
&\B(\xu,q)&=\langle \mathcal{G},q\rangle,\quad \forall q\in \ES,
\end{array}
\end{equation}
where $\mathcal{F} \in \Xhat^\star:$ $\langle \mathcal{F},\ru\rangle = \E\LQ\langle  \Lambda_{L^2} y_d,w_\w\rangle\RQ$, $\forall\ru=(w,v)\in \Xhat$, and $\mathcal{G}\in \ES^\star:$ $\langle \mathcal{G},q\rangle =\E \LQ \langle f,q_\w\rangle \RQ$, $\forall q\in \ES$.
The bilinear form $\C(\cdot,\cdot)$ is  symmetric, as a direct generalization of Lemma \ref{thm:M_gamma} shows.

To obtain parameters-robust estimates, we have to consider a slightly modified formulation of \eqref{eq:saddle_point_cont}. Let us define the subspace in $\ES$ of functions with zero average, $G:=\left\{v \in \ES : \E\LQ v(\cdot,\w) \RQ=0 \right\}$ and its polar space $G^0:=\left\{ \mathcal{F}\in \ES^\star: \mathcal{F}(v) =0,\text{ }\forall v \in G.\right\}$.
We can prove the following Lemma.

\begin{lemma}[Isomorphism between $Y^\star$ and $G^0$]\label{lemma:G0}
$\I$ is an isomorphism between $\Y^\star$ and $G^0$.
\end{lemma}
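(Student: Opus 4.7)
The plan is to establish the isomorphism by verifying, in order, that $\I$ is well-defined and bounded into $G^0$, that it is injective, and that it is surjective; the result will then follow from the open mapping theorem, or equivalently by exhibiting an explicit bounded inverse.

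First I would show $\I$ lands in $G^0$. For any $f \in Y^\star$ and $v \in G$, since $f$ is deterministic the Bochner integral commutes with the duality pairing, giving $\langle \I f, v\rangle = \E\LQ \langle f, v_\w\rangle \RQ = \langle f, \E\LQ v_\w\RQ \rangle = 0$. Boundedness of $\I: Y^\star \to \ES^\star$ is exactly the estimate already derived in the proof of Lemma \ref{Lemma:weak_formulation}, namely $\|\I f\|_{\ES^\star} \leq \|f\|_{Y^\star}\sqrt{\E[1/\df_{\min}]}$, which is finite by Assumption \ref{ass:diff}.

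Next I would prove bijectivity. The key observation is that every $v \in \ES$ admits an expectation in $Y$: from $\|v_\w\|_Y \leq \df_{\min}(\w)^{-1/2}\cdot \df_{\min}(\w)^{1/2}\|v_\w\|_Y$ and Cauchy--Schwarz one obtains $\E\LQ \|v_\w\|_Y\RQ \leq \sqrt{\E[1/\df_{\min}]}\,\|v\|_\mathcal{A}$, so $\E\LQ v_\w\RQ \in Y$ is well-defined as a Bochner integral, and the constant extension $\widetilde{w}\in\ES$ of any $w\in Y$ satisfies $\|\widetilde{w}\|_\mathcal{A}^2 \leq \E[\df_{\max}]\,\|w\|_Y^2 < \infty$ by Assumption \ref{ass:diff}. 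Injectivity follows by testing $\I f = 0$ against such constant extensions $\widetilde{w}$: $0 = \langle \I f, \widetilde{w}\rangle = \langle f, w\rangle$ for every $w \in Y$. For surjectivity, given $\mathcal{F} \in G^0$, define $f \in Y^\star$ by $\langle f, w\rangle := \mathcal{F}(\widetilde{w})$; the bound $|\langle f, w\rangle| \leq \|\mathcal{F}\|_{\ES^\star}\sqrt{\E[\df_{\max}]}\,\|w\|_Y$ shows $f \in Y^\star$. To see $\I f = \mathcal{F}$, decompose any $v \in \ES$ as $v = \widetilde{\E\LQ v_\w\RQ} + (v - \widetilde{\E\LQ v_\w\RQ})$, where the second summand lies in $G$; since $\mathcal{F}$ annihilates $G$, applying $\mathcal{F}$ gives $\mathcal{F}(v) = \mathcal{F}(\widetilde{\E\LQ v_\w\RQ}) = \langle f, \E\LQ v_\w\RQ\rangle = \langle \I f, v\rangle$.

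Finally, since $G^0$ is closed in $\ES^\star$ (as the polar of a subset) and hence a Banach space, and $\I: Y^\star \to G^0$ is a bounded linear bijection between Banach spaces, the open mapping theorem yields that $\I^{-1}$ is bounded as well, so $\I$ is a Banach-space isomorphism. The main subtlety is not a combinatorial difficulty but rather the careful handling of the two moment conditions from Assumption \ref{ass:diff}: one needs $\E[1/\df_{\min}]<\infty$ to ensure $\E\LQ v_\w\RQ$ makes sense for $v\in\ES$ (so the decomposition exists and injectivity goes through), and $\E[\df_{\max}]<\infty$ to ensure constant extensions belong to $\ES$ (so that the candidate preimage $f$ is a bounded functional on $Y$).
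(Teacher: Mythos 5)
Your proof is correct, but it takes a genuinely different route from the paper's. For surjectivity the paper invokes the Riesz representation theorem in the energy space $\ES$ to write $\mathcal{F}(v)=a(\widetilde{f},v)$ and then runs a measure-theoretic localization argument (testing with $v=\psi(\w)\phi(x)$, $\psi=\unob_E-\E\LQ \unob_E\RQ$, over a restricted $\sigma$-algebra on the full-measure set $\Gamma^\infty$) to conclude that $\mathcal{A}_\w\widetilde{f}_\w$ coincides $\PP$-a.s.\ with a deterministic $f\in\Y^\star$; its injectivity step similarly tests with $\mathbbm{1}_{\Gamma^n}(\w)\phi(x)$. You instead build the inverse explicitly: the two observations that every $v\in\ES$ has a well-defined mean $\E\LQ v_\w\RQ\in\Y$ (coercivity plus Cauchy--Schwarz, using $\E\LQ 1/\df_{\min}\RQ<\infty$) and that constant extensions $\widetilde{w}$ of $w\in\Y$ lie in $\ES$ (using $\E\LQ \df_{\max}\RQ<\infty$) give injectivity by testing against $\widetilde{w}$, and surjectivity by pulling $\mathcal{F}$ back through the constant-extension map and splitting $v=\widetilde{\E\LQ v_\w\RQ}+(v-\widetilde{\E\LQ v_\w\RQ})$ with mean-zero second summand. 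Both arguments are legitimate under Assumption \ref{ass:diff}, since $L^p(\Omega)\subset L^1(\Omega)$ for $p\geq 1$ on a probability space, so the moments you need are finite. Your route is more elementary and in one respect buys more: the explicit inverse $\langle \I^{-1}\mathcal{F},w\rangle=\mathcal{F}(\widetilde{w})$ comes with the quantitative bound $\|\I^{-1}\mathcal{F}\|_{\Y^\star}\leq\sqrt{\E\LQ \df_{\max}\RQ}\,\|\mathcal{F}\|_{\ES^\star}$, so $\I$ is a topological isomorphism with explicit constants and your appeal to the open mapping theorem is actually superfluous; the paper's proof only establishes bijectivity. What the paper's argument buys instead is the structural identification of the preimage through the energy-Riesz representative, i.e.\ $\mathcal{F}=a(\widetilde{f},\cdot)$ with $\mathcal{A}_\w\widetilde{f}_\w=f$ a.s., which dovetails with the parametrization of $G^\perp$ via $\mathcal{A}^{-1}\I$ used in Theorem \ref{thm:equivalence_norms}; moreover, its truncation to $\Gamma^n$ in the injectivity step does not use integrability of $\df_{\max}$ at all. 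One cosmetic point: your first displayed inequality for the mean-value estimate is written as a tautology; what you mean is $\|v_\w\|_{\Y}\leq \df_{\min}(\w)^{-1/2}\|v_\w\|_{\mathcal{A}_\w}$ followed by Cauchy--Schwarz, which is exactly the manipulation the paper itself uses and is fine.
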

\begin{proof} To prove that $\I$ is injective, we show that for any $\widetilde{u}$,$\widetilde{w} \in \Y^\star$, $\I\widetilde{u}=\I\widetilde{w}$ in $\ES^\star$ implies $\widetilde{u}=\widetilde{w}$. A direct calculation leads to
\begin{equation}\label{eq:settozero}
\langle \I\widetilde{u}-\I\widetilde{w},v\rangle =\E\LQ \langle \widetilde{u}-\widetilde{w},v_\w\rangle \RQ=0,\quad \forall v \in \ES.
\end{equation}
Consider now the sets $\Gamma^n:=\left\{\w \in \Omega: \max\left\{\df_{\max}(\w),1/\df_{\min}(\w)\right\}<n \right\}$. The sets $\Gamma^n$ are measurable, and $|\Gamma^n|>0$ for a sufficiently large $n$.
Taking $v(x,\w)=\mathbbm{1}_{\Gamma^n}(\w)\phi(x)$, where $\phi\in Y$ is arbitrary and $n$ is large enough, we have that $v
\in\ES$ and \eqref{eq:settozero} implies $\widetilde{u}=\widetilde{w}$ in $\Y^\star$.

For the surjectivity, first note that $\text{Im}\I\subset G^0$ since
\[\langle \I \widetilde{u},v\rangle=\E\LQ\langle \widetilde{u},v_\w\rangle\RQ= \langle \widetilde{u},\E \LQ v_\w\RQ \rangle=0,\quad \forall v\in G,\]
where one can exchange the duality pair between $Y$ and $Y^\star$ and the expectation operator due to the property of the Bochner integral \cite[E. 11]{cohn2013measure}. We now prove that $G^0\subset \text{Im}\I$. Take any $F\in G^0\subset \ES^\star$. Due to Riesz theorem, there exists a $\widetilde{f}\in \ES$ such that $F(v)=a(\widetilde{f},v)$, $\forall v\in \ES$. Restricting to $v\in G$,
\begin{equation}\label{eq:prop_isomorphism}
0=F(v)=a(\widetilde{f},v)=\E\LQ \langle \mathcal{A}_\w \widetilde{f}_\w,v_\w\rangle \RQ.
\end{equation}
Consider now the set $\Gamma^\infty :=\cup_{n\in \mathbb{N}} \Gamma^n$ which has full measure, i.e. $\PP(\Gamma^\infty)=1$ \footnote{If $\PP\left(\left(\Gamma^\infty\right)^c\right)>0$ then either $\frac{1}{\df_{\min}(\w)}$ or $\df_{\max}(\w)$ would not lie in $L^1(\Omega)$, contradicting Assumption \ref{ass:diff}.} and the restricted sigma algebra $\mathcal{M}:=\left\{E \cap \Gamma^\infty: E \in \mathcal{F}\right\}$ on $\Gamma^\infty$. Let us define $v=\psi(\w)\phi(x)$ where $\phi(x)\in Y$, and $\psi(\w)=\unob_E(\w)-\overline{\unob_E}$ for an arbitrary $E\in\mathcal{M}$, with $\overline{\unob_E}:=\E\LQ\unob_E(\w)\RQ$, so that $v\in G\subset \ES$. Replacing the expression of $v$ into \eqref{eq:prop_isomorphism}, we obtain
\[\E\LQ \unob_E\langle \mathcal{A}_\w \widetilde{f}_\w,\phi\rangle \RQ=\E\LQ \overline{\unob_E}\langle \mathcal{A}_\w \widetilde{f}_\w,\phi\rangle \RQ=\overline{\unob_E}\langle \E\LQ \mathcal{A}_\w \widetilde{f}_\w\RQ,\phi\rangle,\]
and denoting $f:= \E\LQ \mathcal{A}_\w \widetilde{f}_\w\RQ$ we have,
\begin{equation}\label{eq:almostsurely}
\E\LQ \unob_E \langle \mathcal{A}_\w\widetilde{f}_\w -f,\phi\rangle \RQ=\int_E \langle \mathcal{A}_\w \widetilde{f}_\w-f,\phi\rangle=0,\quad \forall E\in \mathcal{M},\forall \phi \in Y.
\end{equation}
Due to the arbitrariness of $E$ and the full measure of $\Gamma^\infty$, \eqref{eq:almostsurely} implies $\langle \mathcal{A}_\w \widetilde{f}_\w-f,\phi\rangle=0$ $\PP$-a.s., $\forall \phi \in Y$, hence $\mathcal{A}_\w \widetilde{f}_\w=f\in Y^\star$ $\PP$-a.s.

Thus, we conclude
\[\mathcal{F}(v)=a(\widetilde{f},v)=\langle \mathcal{A}f,v\rangle=\E\LQ \langle\mathcal{A}_\w \widetilde{f}_\w,v_\w\rangle \RQ=\E\LQ \langle f,v_\w\rangle\RQ= \mathcal{I}f(v),\quad \forall v\in \ES,\]
that is, for every $\mathcal{F}\in G^0$, there exists a $f\in Y^\star$ such that $\mathcal{F}=\I f.$
\end{proof}

Considering the state equation, we remark that  
\[a(y,v)= \langle \I \left(\Lambda_{U} u+f\right),v\rangle=0 \quad \forall v \in G,\]
that is, if $y$ is a solution to the state equation, then $y$ is a-orthogonal to G, i.e. $y\in G^\perp:=\left\{ y \in \ES: a(y,v) =0,\text{ }\forall v \in G\right\}$. In other words, whatever control function $u$ we choose, we cannot obtain a generic state $y \in \ES$, but the state solution is constrained to lie in the subspace $G^\perp$ of $\ES$. 
\begin{remark}
A similar constraint on the state variable has been observed in \cite{elvetun2016pde} for deterministic OCP with a control function acting only on a subdomain $\widetilde{D}\subset D$. The parallelism between a robust OCPUU and a deterministic OCP with local control lies in the observation that, in both OCPs, one cannot generate the whole dual of the state space using only elements of the control space. For robust OCPUU one has $\Ima\mathcal{I} \subsetneq \ES^\star$, see Lemma \ref{lemma:G0}, and similarly for a deterministic OCP with local control one cannot generate $(H^1(D))^\star$ using only elements of $(H^1(\widetilde{D}))^\star$
\cite{elvetun2016pde}. From the algebraic point of view, this leads to low-rank perturbed Schur complements, where the rank of the perturbation is equal to the size of the finite element discretization of the control space (see \eqref{eq:Schurcomplement_OCPUU} and \cite{heidel2019preconditioning}).
\end{remark}

To get robust-parameters estimates at the continuous level, it is essential to use these properties of the continuous formulation of the saddle point system. We thus consider the OCP \eqref{eq:OCP} with the state space equal to $G^\perp$. As the residual of the state equation $\mathcal{A}y-\I(\Lambda_{Y}u +f)\in G^0=(G^\perp)^\star$, the adjoint variable $p$ belongs to $G^\perp$ as well. Computing the directional derivatives of the restricted Lagrangian $\widehat{\mathcal{L}}(y,u,p): G^\perp \times Y \times G^\perp \rightarrow  \setR$ with  $\widehat{\mathcal{L}}(y,u,p):=\mathcal{L}(y,u,p)$, the optimality system becomes: find $(y,u,p)\in G^\perp\times \Y\times G^\perp$ such that
\begin{equation}\label{eq:optimality_system_Gperp}
\begin{array}{r l r r}
\E\LQ \langle \mathcal{A}_\w p_\w,v_\w\rangle \RQ + \E\LQ \langle \Lambda_{L^2}\right(y_\w+\gamma (y_\w-\E\LQ y_\w\RQ)\left),v_\w\rangle \RQ &= \E\LQ \langle \Lambda_{L^2} y_d,v_\w\rangle \RQ,\quad &\forall v\in G^\perp,&\\
\langle \beta \Lambda_Y u- \Lambda_Y \E\LQ  p_w\RQ,v\rangle &=0,\quad &\forall v\in Y,&\\
\E\LQ \langle \mathcal{A}_\w y_\w,v_\w\rangle\RQ -\E\LQ \Lambda_Y u,v_\w\rangle \RQ & = \E\LQ \langle f,v_\w\rangle\RQ,\quad &\forall v \in G^\perp.&
\end{array}
\end{equation}
Defining the space $\mathcal{X}:=G^\perp\times \Y$ and using the bilinear and linear forms defined above, the optimality conditions read:
 Find $(\xu,p)\in \mathcal{X} \times G^\perp$ such that
\begin{equation*}\label{eq:saddle_point_cont_Gperp}
\begin{aligned}
\C(\xu,\vu) + \B(\vu,p)&=\langle \mathcal{F},\vu\rangle,\quad \forall \vu \in \mathcal{X},\\
\B(\xu,q)&=\langle \mathcal{G},q\rangle,\quad \forall q\in G^\perp.
\end{aligned}
\end{equation*}

We now prove an important result stating that $G^\perp$ is homeomorphic to $\Y$. We introduce the operator $\mathcal{E}:Y\rightarrow G^\perp$ defined as $\mathcal{E}y=\mathcal{A}^{-1}\mathcal{I}\Lambda_Y y$, and its inverse $\mathcal{E}^{-1}: G^\perp \rightarrow Y$ such that $\mathcal{E}^{-1}v=\Lambda_Y^{-1}\IG^{-1}\mathcal{A}v$, where $\IG^{-1}$ is the inverse of the map $\mathcal{I}:\Y^\star \rightarrow G^0$.
\begin{theorem}\label{thm:equivalence_norms}
Let us consider $G^\perp$ equipped with the norm $\|\cdot\|_{\mathcal{A}}^2:=\langle {\mathcal{A}}\cdot,\cdot\rangle$ and $\Y$ equipped with the norm $\|\cdot\|_\Y$. The map $\mathcal{E}^{-1}$ is a homeomorphism between $G^\perp$ and $Y$, and further it holds
\[\frac{1}{\sqrt{\E\LQ \frac{1}{\df_{\min}(\w)}\RQ}}\|y\|_{{\mathcal{A}}}\leq \|\mathcal{E}^{-1}y\|_Y\leq  \sqrt{\E\LQ \df_{\max}(\w)\RQ} \|y\|_{\mathcal{A}}.\]
\end{theorem}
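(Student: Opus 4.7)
The plan is to proceed in three steps: first confirm that $\mathcal{E}$ and $\mathcal{E}^{-1}$ are well-defined mutually inverse maps between $Y$ and $G^\perp$; then extract the two-sided estimate from the defining identity $\mathcal{A}y=\mathcal{I}\Lambda_Y u$ (with $u:=\mathcal{E}^{-1}y$) by testing it against two carefully chosen functions in $\ES$.

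For well-definedness, given $y\in Y$, Lemma~\ref{lemma:G0} guarantees $\mathcal{I}\Lambda_Y y\in G^0\subset\ES^\star$; hence $\mathcal{E}y=\mathcal{A}^{-1}\mathcal{I}\Lambda_Y y\in\ES$, and for every $v\in G$ one has $a(\mathcal{E}y,v)=\langle\mathcal{I}\Lambda_Y y,v\rangle=0$, so $\mathcal{E}y\in G^\perp$. Conversely, for $y\in G^\perp$, $\mathcal{A}y$ acts trivially on $G$, hence $\mathcal{A}y\in G^0$, and the invertibility of $\IG$ makes $\mathcal{E}^{-1}y\in Y$ well-defined. A direct substitution will then give $\mathcal{E}\circ\mathcal{E}^{-1}=\mathrm{id}_{G^\perp}$ and $\mathcal{E}^{-1}\circ\mathcal{E}=\mathrm{id}_Y$.

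Setting $u=\mathcal{E}^{-1}y$, the identity $\mathcal{A}y=\mathcal{I}\Lambda_Y u$ reads
\[ a(y,v)=\E\LQ (u,v_\w)_Y\RQ,\qquad \forall v\in\ES. \]
For the upper bound on $\|u\|_Y$, I would test with the constant extension $\iota u:\w\mapsto u$, which belongs to $\ES$ because Assumption~\ref{ass:diff} together with~\eqref{eq:coerv_conti} give $\|\iota u\|_{\mathcal A}^2=\E\LQ a_\w(u,u)\RQ\leq\E\LQ\df_{\max}(\w)\RQ\|u\|_Y^2<\infty$. Cauchy--Schwarz in the energy inner product then yields
\[ \|u\|_Y^2=a(y,\iota u)\leq\|y\|_{\mathcal A}\,\|\iota u\|_{\mathcal A}\leq\sqrt{\E\LQ\df_{\max}(\w)\RQ}\,\|u\|_Y\,\|y\|_{\mathcal A}, \]
from which the right-hand inequality of the theorem follows after dividing by $\|u\|_Y$.

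For the lower bound, the key observation is that the global identity $a(y,v)=\E\LQ(u,v_\w)_Y\RQ$ decouples sample-wise. Mimicking the localization argument from the proof of Lemma~\ref{lemma:G0} (with test functions $v(x,\w)=\mathbbm{1}_{\Gamma^n}(\w)\phi(x)$ and letting $n\to\infty$), I recover the pointwise relation $a_\w(y_\w,\phi)=(u,\phi)_Y$ for every $\phi\in Y$ and $\PP$-a.e.\ $\w\in\Omega$. Inserting $\phi=y_\w$, then using pointwise coercivity and Cauchy--Schwarz in $Y$,
\[ \df_{\min}(\w)\|y_\w\|_Y^2\leq a_\w(y_\w,y_\w)=(u,y_\w)_Y\leq\|u\|_Y\|y_\w\|_Y, \]
so $a_\w(y_\w,y_\w)\leq \|u\|_Y^2/\df_{\min}(\w)$. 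Taking expectation over $\Omega$ produces $\|y\|_{\mathcal A}^2\leq\E\LQ 1/\df_{\min}(\w)\RQ\|u\|_Y^2$, which is the left-hand inequality. The main subtlety is this localization step: it must be justified using the truncation sets $\Gamma^n$ on which $\df_{\max}$ and $1/\df_{\min}$ are bounded, so that the sampled test functions actually belong to $\ES$; the remainder of the argument is Cauchy--Schwarz combined with the two pointwise bounds in~\eqref{eq:coerv_conti}.
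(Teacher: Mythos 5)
Your proof is correct. The bijectivity discussion and the upper bound coincide in substance with the paper's: the paper's chain $\|\mathcal{E}^{-1}y\|^2_Y=\E\LQ\langle\mathcal{A}_\w y_\w,\mathcal{E}^{-1}y\rangle\RQ\le\cdots$ is precisely your test of $\mathcal{A}y=\I\Lambda_Y u$ against the constant extension $\iota u$, followed by Cauchy--Schwarz and the continuity bound $\|\iota u\|^2_{\mathcal{A}}\le\E\LQ\df_{\max}(\w)\RQ\|u\|^2_Y$ (the paper merely applies Cauchy--Schwarz sample-wise and then in probability, rather than once globally). Where you genuinely depart from the paper is the lower bound. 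The paper never leaves the global level: it tests the same identity with $v=y$ itself, obtaining $\|y\|^2_{\mathcal{A}}=\E\LQ (u,y_\w)_Y\RQ\le\|u\|_Y\,\E\LQ\|y_\w\|_Y\RQ$, and concludes with pointwise coercivity plus Cauchy--Schwarz in probability; no sample-wise identity is ever needed. You instead re-run the localization machinery of Lemma \ref{lemma:G0} (indicator test functions supported on the truncation sets $\Gamma^n$) to upgrade the variational identity to the $\PP$-a.s. statement $a_\w(y_\w,\phi)=(u,\phi)_Y$ for all $\phi\in Y$, and then argue realization by realization with $\phi=y_\w$. Both routes yield the identical constant. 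Yours makes explicit a clean structural fact --- each realization $y_\w$ solves the deterministic problem with the same fixed datum $\Lambda_Y u$, which is exactly why only the first moment of $1/\df_{\min}$ enters --- and it trades the probabilistic Cauchy--Schwarz for a pointwise manipulation; the cost is the measure-theoretic overhead of the localization, including a point you state but do not fully pin down (and which the paper's Lemma \ref{lemma:G0} also leaves implicit): the exceptional null set must be made uniform in $\phi$, which requires passing to a countable dense subset of the separable space $Y$ and using continuity in $\phi$ of both sides, before you are allowed to insert the $\w$-dependent test function $\phi=y_\w$. The paper's global argument sidesteps this subtlety altogether, which is why it is the leaner of the two.
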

\begin{proof}
Observe that for every $y\in G^\perp$, $\mathcal{E}^{-1}y$ is well defined because ${\mathcal{A}}y\in G^0$ and thus it can be written as ${\mathcal{A}}y=\I\widetilde{L}$ for a unique $\widetilde{L}\in \Y^\star$ due to Lemma \ref{lemma:G0}. Finally the Riesz's representation isomorphism on $Y$ returns the Riesz representative $L$, so that $L=\mathcal{E}^{-1}y$. 
Moreover, on the one hand
\begin{equation*}
\begin{split}
\|y\|^2_{{\mathcal{A}}}&=\langle {\mathcal{A}} y,y\rangle=
\E\LQ \langle \IG^{-1}\mathcal{A} y,y_\w\rangle\RQ \leq \|\Lambda_Y^{-1}\IG^{-1} \mathcal{A} y\|_{Y}\E\LQ \|y_\w\|_Y \RQ\leq \|\mathcal{E}^{-1}y\|_{Y} \E\LQ \frac{1}{\sqrt{\df_{\min}(\w)}} \|y_\w\|_{\mathcal{A}_{\w}}\RQ \  \\
&\leq \|\mathcal{E}^{-1}y\|_{\Y} \left(\E\LQ \frac{1}{\df_{\min}(\w)}\RQ\right)^{\frac{1}{2}}\left(\E\LQ \|y_\w\|^2_{\mathcal{A}_\w}\RQ \right)^{\frac{1}{2}} = \|\mathcal{E}^{-1}y\|_{\Y} \sqrt{\E\LQ \frac{1}{\df_{\min}(\w)}\RQ} \|y\|_{\mathcal{A}},
\end{split}
\end{equation*}
which implies $\|y\|_{{\mathcal{A}}}\leq \sqrt{\E\LQ \frac{1}{\df_{\min}(\w)}\RQ}\|\mathcal{E}^{-1}y\|_Y$.
On the other hand,
\begin{equation*}
\begin{aligned}
\|\mathcal{E}^{-1}y\|_Y^2&=(\Lambda_Y^{-1}\IG^{-1}\mathcal{A}y,\mathcal{E}^{-1}y)_{\Y}=\langle \IG^{-1}\mathcal{A}y,\mathcal{E}^{-1}y\rangle=\E\LQ \langle \IG^{-1}\mathcal{A}y,\mathcal{E}^{-1}y\rangle \RQ=\E \LQ\langle \mathcal{A}_{\w} y_\w,\mathcal{E}^{-1}y\rangle\RQ \\
&\leq \E\LQ \|y_\w\|_{\mathcal{A}_{\w}}\|\mathcal{E}^{-1}y\|_{\mathcal{A}_{\w}}\RQ
\leq \|\mathcal{E}^{-1}y\|_Y \E\LQ\|y_\w\|_{\mathcal{A}_{\w}}\sqrt{\df_{\max}(\w)}\RQ
\leq \|\mathcal{E}^{-1}y\|_Y \|y\|_{{\mathcal{A}}} \sqrt{\E\LQ \df_{\max}(\w)\RQ}.
\end{aligned}
\end{equation*}
which implies $\|\mathcal{E}^{-1}y\|_Y\leq \sqrt{\E\LQ \df_{\max}(\w)\RQ} \|y\|_{\mathcal{A}}$.
\end{proof}
Fig. \ref{Fig:spaces} provides a useful graphical overview of the relations between the spaces $\Y$, $\Y^\star$, $ G^0$ and $G^\perp$. Due to Theorem \eqref{thm:equivalence_norms} we can parametrize the space $G^\perp\subset \ES$. Any $y \in G^\perp$ is in a one-to-one correspondence with an element of $\Y$ through the operator $\mathcal{E}$, that is $y=\mathcal{E}v$, for a unique $v \in \Y$. This property will be essential to prove a $\beta$-independent inf-sup condition.
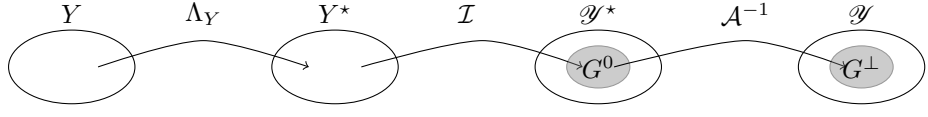
\begin{figure}
\centering
\begin{tikzpicture}[scale=0.7]
\draw (0,0) ellipse (1.2cm and 0.7cm);
\draw [black] (0,1) node {$\Y$};\quad
\draw [black,->] plot [smooth] coordinates {(0.5,0) (2.5,0.5) (4.5,0)};
\draw [black] (2.5,1) node {$\Lambda_{\Y}$};
\draw (5,0) ellipse (1.2cm and 0.7cm);
\draw[black] (5,1) node {$\Y^\star$};
\draw [black,->] plot [smooth] coordinates {(5.5,0) (7.8,0.5) (9.7,0)};
\draw [black] (7.5,1) node {$\mathcal{I}$};

\draw (10,0) ellipse (1.2cm and 0.7cm);
\draw [fill=gray,opacity=0.4] (10,0) ellipse (0.6cm and 0.4cm);
\draw[black] (10,1) node {$\ES^\star$};
\draw[black] (10,0) node {$G^0$};
\draw [black,->] plot [smooth] coordinates {(10.3,0) (12.8,0.5) (14.7,0)};
\draw [black] (12.8,1) node {${\mathcal{A}}^{-1}$};
\draw (15,0) ellipse (1.2cm and 0.7cm);
\draw[black] (15,1) node {$\ES$};
\draw [fill=gray,opacity=0.4] (15,0) ellipse (0.6cm and 0.4cm);
\draw[black] (15,0) node {$G^\perp$};
\end{tikzpicture}
\caption{Graphical representation of the maps between the different functional spaces.}\label{Fig:spaces}
\end{figure}

Let us now consider the following functional setting,
\[\mathcal{Y}:=\left(G^\perp,(\cdot,\cdot)_{\Ys}\right),\quad U=\left(\Y,(\cdot,\cdot)_U\right),\quad \mathcal{X}=\left(\Ys\times U,(\cdot,\cdot)_\mathcal{X}\right),   \quad \Ps:=\left(G^\perp,(\cdot,\cdot)_{\Ps}\right),\] where the scalar products define the weighted-norms
\begin{equation}\label{eq:weighted_norms}
\begin{aligned}
\|y\|^2_{\Ys}&:=(y,y)_{\Ys}=\E\LQ (y_\w,y_\w)_{L^2} +\gamma \left(y_\w-\E\LQ y_\w\RQ,y_\w-\E\LQ y_\w\RQ\right)_{L^2}\RQ +\beta \E\LQ \langle \mathcal{A}_{\w} y_\w,y_\w\rangle \RQ=\\
&= (y,y)_{L^2,\gamma} +\beta (y,y)_{{\mathcal{A}}},\\
\|u\|^2_{U}&:=(u,u)_U=\beta (u,u)_{\Y},\\
\|(y,u)\|^2_\mathcal{X}&:= \left((y,u),(y,u)\right)_\mathcal{X}=(y,y)_{\Ys} + (u,u)_U,\\
\|p\|^2_{\Ps}&:=\frac{1}{\beta}\E\LQ \langle \mathcal{A}_\w p_\w,p_\w\rangle\RQ=\frac{1}{\beta}(p,p)_{\mathcal{A}}.\\
\end{aligned}
\end{equation}
For the state variable $y$ we introduce the scalar product $(\cdot,\cdot)_{L^2,\gamma}$ which consists of two parts: the first one is the simple $L^2(\Omega,L^2(D))$ norm. The second part proportional to $\gamma$ consists in expectation of the $L^2(D)$ norm of the difference between $y_\w$ from its mean value.
\begin{remark}\label{remark:OP2}
We remark once more that the energy norm and the $L^2(\Omega,Y)$ norm are not equivalent, unless $\df(x,\w)$ is uniformly bounded. In the latter case, one could show the well-posedness of the saddle point system working exclusively with the energy norm (obtaining, though, $\beta$-dependent constants). In constrast, for a not uniformly bounded $\df(x,\w)$, one would fail to bound the bilinear form $\mathcal{C}(\cdot,\cdot)$ with only the energy norm, and thus one would have to rely on the framework of \cite{gittelson,schwab}, and introduce an energy norm with respect to an auxiliary measure to be able to bound the $L^2(\Omega,Y)$ norm with the new modified energy norm. In this manuscript, we are interested to study $\beta$-robust preconditioners, which are derived taking a weighted combination of both the $L^2(\Omega,Y)$ norm and the energy norm, see \cite{doi:10.1137/060660977,zulehner2011nonstandard,mardal2011preconditioning} for deterministic OCP, even for bounded random fields, and thus we can avoid the framework of \cite{gittelson,schwab}, since we do not need any relation between the two norms, as the next Theorem shows. 
\end{remark}
\begin{theorem}[Well-posedness of the saddle point problem]\label{thm:Operator_pre}\text{}\\
\begin{enumerate}
\item The bilinear form $\C$ is bounded: $\C(\xu,\vu)\leq \|\xu\|_{\mathcal{X}}\|\vu\|_{\mathcal{X}},\quad \forall \xu,\vu \in \mathcal{X}$.
\item The bilinear form $\C$ is coercive on the Kernel of $\B$: $\C(\xu,\xu)\geq C_1 \|\xu\|_{\mathcal{X}}^2\quad \forall \xu \in \text{Ker}\B,$
where $C_1:=\min\left\{\frac{1}{2},\frac{1}{2\E\LQ \frac{1}{\df_{\min}(\w)}\RQ}\right\}$.
\item The bilinear form $\B$ is bounded: $\sup_{0\neq \xu\in \mathcal{X}} \frac{\B(\xu,q)}{\|\xu\|_\mathcal{X}}\leq C_2\|q\|_{\Ps} \quad \forall q \in \Ps,$
where $C_2=\max\left\{1,\E\LQ \frac{1}{\df_{\min}(\w)}\RQ\right\}$.
\item The bilinear form $\B$ satisfies the inf-sup condition: $\sup_{0\neq \xu\in \mathcal{X}} \frac{\B(\xu,\qu)}{\|\xu\|_\mathcal{X}}\geq C_3\|\qu\|_{\Ps},\quad \forall \qu \in \Ps$, where $C_3=\frac{1}{\E\LQ \df_{\max}(\w)\RQ}$.
\end{enumerate}
\end{theorem}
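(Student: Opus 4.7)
The plan is to verify the four estimates in sequence, leveraging two identities obtained by direct calculation: $\C(\xu,\xu) = \|y\|^2_{L^2,\gamma} + \|u\|^2_U$ (continuous analog of Lemma \ref{thm:M_gamma}) and $\B(\xu,q) = (y,q)_{\mathcal{A}} - (u, \E\LQ q_\w\RQ)_\Y$, which follows from $\langle \Lambda_\Y u, q_\w\rangle = (u,q_\w)_\Y$ together with the Bochner property that the expectation commutes with the continuous dual pairing.

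For (1), I would split $\C$ into its $L^2$-weighted and its control contribution, apply Cauchy-Schwarz to each, and recombine the two bounds via the elementary inequality $ab+cd \leq \sqrt{a^2+c^2}\sqrt{b^2+d^2}$. For (2), the decisive structural observation is that for $\xu = (y,u)\in \text{Ker}\B$, the residual $\mathcal{A}y-\I\Lambda_\Y u$ lies in $G^0$ by construction (since $y\in G^\perp$), annihilates $G^\perp$ by hypothesis, and trivially annihilates $G$; since $\ES = G\oplus G^\perp$, it must vanish in $\ES^\star$, so $y = \mathcal{E}u$. Theorem \ref{thm:equivalence_norms} then yields $\beta \|y\|^2_\mathcal{A} \leq \E\LQ 1/\df_{\min}(\w)\RQ \|u\|^2_U$, and splitting $\|u\|^2_U$ into halves lets me absorb the missing $\beta\|y\|^2_\mathcal{A}$-contribution of $\|\xu\|^2_\mathcal{X}$ into $\C(\xu,\xu)$, producing the stated $C_1$.

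For (3) the two pieces of $\B$ are treated separately: $|(y,q)_\mathcal{A}|\leq \|y\|_\mathcal{A}\|q\|_\mathcal{A} = \sqrt{\beta}\|y\|_\mathcal{A}\|q\|_\Ps \leq \|\xu\|_\mathcal{X}\|q\|_\Ps$, and $|(u,\E\LQ q_\w\RQ)_\Y| \leq \|u\|_\Y \|\E\LQ q_\w\RQ\|_\Y$, where Bochner--Jensen followed by Cauchy-Schwarz in $\w$ gives $\|\E\LQ q_\w\RQ\|_\Y \leq \E\LQ \|q_\w\|_{\mathcal{A}_\w}/\sqrt{\df_{\min}(\w)}\RQ \leq \sqrt{\E\LQ 1/\df_{\min}\RQ}\|q\|_\mathcal{A}$. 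Converting back via $\|u\|_\Y = \|u\|_U/\sqrt{\beta}$ and $\|q\|_\mathcal{A} = \sqrt{\beta}\|q\|_\Ps$, and finally combining the two bounds by the same discrete Cauchy-Schwarz as in (1), yields a $\beta$- and $\gamma$-independent continuity constant.

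The crux is the inf-sup (4). Given $q\in \Ps$, the natural choice of test function is $\xu := (0, -\mathcal{E}^{-1}q)$, which is well-defined thanks to Lemma \ref{lemma:G0} and Theorem \ref{thm:equivalence_norms}. Setting $\phi = \mathcal{E}^{-1}q \in \Y$, a direct computation gives $\B(\xu,q) = (\phi, \E\LQ q_\w\RQ)_\Y = \langle \I\Lambda_\Y\phi, q\rangle = \langle\mathcal{A}q, q\rangle = \|q\|^2_\mathcal{A}$, while $\|\xu\|^2_\mathcal{X} = \beta\|\phi\|^2_\Y\leq \beta\E\LQ \df_{\max}\RQ\|q\|^2_\mathcal{A}$ by Theorem \ref{thm:equivalence_norms}. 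The resulting quotient is proportional to $\|q\|_\Ps$ with a $\beta$-independent constant. The main obstacle here is that a generic test function does not exploit the fact that $\I$ has range only in $G^0\subsetneq \ES^\star$; it is the restriction of the state/adjoint variables to $G^\perp$ together with the isomorphism $\mathcal{E}\colon \Y\to G^\perp$ that makes the construction work. This structural limitation is precisely the continuous reflection of the low-rank perturbation of the discrete Schur complement observed in \eqref{eq:Schurcomplement_OCPUU}, and is what forces the parallel with deterministic OCPs having spatially localized control.
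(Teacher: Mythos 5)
Your proposal is correct, and it follows the same overall skeleton as the paper's proof: the same splitting of $\C$ and $\B$ into their $L^2$-weighted/energy and control parts, the same two key ingredients (Lemma \ref{lemma:G0} and Theorem \ref{thm:equivalence_norms}), and, for the inf-sup condition, the identical crucial choice of the control test element $\mathcal{E}^{-1}q$. Three of your local arguments differ, however, and the differences are worth recording. First, for the coercivity on $\text{Ker}\B$ the paper does not identify the kernel: it merely tests the kernel identity with $q=y\in G^\perp$ and estimates $(y,y)_{\mathcal{A}}=\E\LQ \langle \Lambda_Y u,y_\w\rangle\RQ\leq \|u\|_{\Y}\sqrt{\E\LQ 1/\df_{\min}(\w)\RQ}\,\|y\|_{\mathcal{A}}$, whereas you prove the stronger structural fact $\mathcal{A}y=\I\Lambda_{\Y}u$, i.e. $y=\mathcal{E}u$, by observing that the residual annihilates both $G$ and $G^\perp$. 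Both routes give the same constant $C_1$, but yours silently uses the decomposition $\ES=G\oplus G^\perp$, which requires $G$ to be closed in the energy norm; this does hold, since $v\mapsto\E\LQ v_\w\RQ$ is bounded from $\ES$ into $\Y$ by Cauchy--Schwarz and Assumption \ref{ass:diff} with $p\geq 1$, but you should state this one-line verification (the paper's test-function trick avoids the issue altogether). Second, for the continuity of $\B$ and the inf-sup condition the paper invokes \cite[Lemma 2.1]{zulehner2011nonstandard} to split the supremum over $(y,u)$ exactly into two separate suprema; you bypass this external lemma with plain Cauchy--Schwarz plus the explicit test pair $(0,-\mathcal{E}^{-1}q)$ (the sign correctly makes $\B$ positive), which is more elementary and self-contained, at no loss. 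Finally, a remark on constants: your derivation produces the continuity constant $\sqrt{1+\E\LQ 1/\df_{\min}(\w)\RQ}$ and the inf-sup constant $1/\sqrt{\E\LQ \df_{\max}(\w)\RQ}$; these are in fact exactly what the paper's own computations yield, and they differ (immaterially for the purpose of $\beta$- and $\gamma$-robustness) from the constants $C_2$ and $C_3$ displayed in the statement, so the small mismatch is between the paper's statement and its proof, not a defect of your argument.
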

\begin{proof}
Let us first show the continuity of $\C$. Being $\C(\cdot,\cdot)$ symmetric, it is sufficient to show that $\C(\xu,\xu)\leq \|\xu\|^2_{X}$ which is trivially true since, for $\xu=(y,u)$, 
\[\C(\xu,\xu)=(y,y)_{L^2,\gamma}+\beta (u,u)_{\Y}\leq (y,y)_{L^2,\gamma}+\beta (y,y)_{\mathcal{A}}+\beta (u,u)_{\Y}=(\xu,\xu)_\mathcal{X}=\|\xu\|^2_\mathcal{X}.\]
Next, we focus on the coercivity of $\C$ on Ker$\B$. If $\xu=(y,u)\in \text{Ker}\B$ then $\langle {\mathcal{A}} y,q\rangle=\langle \I \Lambda_{\Y} u,q\rangle =\E\LQ \langle \Lambda_Y u,q_\w\rangle \RQ$ which, choosing $q=y$, implies
\[(y,y)_{\mathcal{A}}\leq \|u\|_{\Y} \E\LQ \frac{1}{\sqrt{\df_{\min}(\w)}} \|y_\w\|_{\mathcal{A}_{\w}}\RQ \leq \|u\|_Y \sqrt{\E\LQ \frac{1}{\df_{\min}(\w)}\RQ} \|y\|_{\mathcal{A}}.\] 
Then,
\begin{equation*}
\begin{aligned}
\C((y,u),(y,u))&=(y,y)_{L^2,\gamma}+\beta(u,u)_{\Y}\geq (y,y)_{L^2,\gamma}+\frac{\beta}{2}(u,u)_{\Y}+\frac{\beta}{2\E\LQ \frac{1}{\df_{\min}(\w)}\RQ} (y,y)_{{\mathcal{A}}}\\
&\geq \min\left\{\frac{1}{2},\frac{1}{2\E\LQ \frac{1}{\df_{\min}(\w)}\RQ}\right\} ((y,u),(y,u))_\mathcal{X}.
\end{aligned}
\end{equation*}

To show the continuity of $\B$, we consider
\begin{equation*}
\sup_{(y,u)\in \mathcal{X}} \frac{\B^2((y,u),q)}{\|(y,u)\|^2_{\mathcal{X}}}=\sup_{(y,u)\in \mathcal{X}} \frac{\left((y,q)_{{\mathcal{A}}} -\langle \I\Lambda_{\Y} u,q\rangle \right)^2}{\|(y,u)\|^2_{\mathcal{X}}}=\sup_{y\in \Ys} \frac{(y,q)^2_{\mathcal{A}}}{\|y\|^2_{\Ys}} + \sup_{u\in U} \frac{\left(\E\LQ ( u,q_\w)_{\Y}\RQ\right)^2}{\|u\|^2_{U}},
\end{equation*}
where the last equality follows from \cite[Lemma 2.1]{zulehner2011nonstandard}. The second term simplifies to
\begin{equation}\label{eq:continuity_B1}
\sup_{u\in U} \frac{\left(\E\LQ (u,q_\w)\RQ\right)^2}{\|u\|^2_{U}}=\frac{1}{\beta}\sup_{u\in \Y} \frac{\left( (u,\E\LQ q_\w \RQ )_{\Y}\right)^2}{\|u\|^2_{\Y}}=\frac{1}{\beta}\left\|\E\LQ q_\w\RQ \right\|^2_{\Y}. 
\end{equation}
Considering the first term,
\begin{equation}\label{eq:continuity_B2}
\begin{aligned}
\sup_{y\in \Ys} \frac{(y,q)_{\mathcal{A}}^2}{\|y\|^2_{\Ys}}=\sup_{y\in \Ys} \frac{(y,q)_{\mathcal{A}}^2}{(y,y)_{L^2,\gamma}+\beta (y,y)_{\mathcal{A}}}\leq \frac{1}{\beta}\sup_{y\in \Ys} \frac{(y,q)_{\mathcal{A}}^2}{(y,y)_{\mathcal{A}}}=\frac{1}{\beta} (q,q)_{\mathcal{A}}.
\end{aligned}
\end{equation}
Putting together \eqref{eq:continuity_B1} and \eqref{eq:continuity_B2}, using the Cauchy-Schwarz inequality and equivalence between $\|\cdot\|_Y$ and $\|\cdot\|_{A_{\w}}$,
\begin{equation*}
\begin{aligned}
&\sup_{(y,u)\in \mathcal{X}} \frac{\B^2((y,u),q)}{\|(y,u)\|^2_{\mathcal{X}}}\leq \frac{1}{\beta}\left\|\E\LQ q_\w\RQ \right\|^2_{\Y}+\frac{1}{\beta}(q,q)_{\mathcal{A}}\leq \frac{1}{\beta}\left( \E\LQ \frac{1}{\df_{\min}(\w)}\RQ\|q\|^2_{\mathcal{A}}+ \|q\|^2_{\mathcal{A}}\right)\\
&\leq \max\left\{1,\E\LQ \frac{1}{\df_{\min}(\w)}\RQ\right\}  \|q\|^2_{\Ps}.
\end{aligned}
\end{equation*}
Finally, we deal with the inf-sup condition. Using again \cite[Lemma 2.1]{zulehner2011nonstandard} and choosing $(y,u)=(0,u)\in\mathcal{X}$, we simply obtain the estimate
\begin{equation*}\label{eq:inf_sup}
\begin{aligned}
\sup_{0\neq (y,u)\in \mathcal{X}} \frac{\B^2(x,q)}{\|x\|^2_\mathcal{X}}=\sup_{y\in \Ys} \frac{(y,q)^2_{\mathcal{A}}}{\|y\|^2_{\Ys}} + \sup_{u\in U} \frac{\left(\langle \I \Lambda_{\Y}u,q\rangle\right)^2}{\|u\|^2_{U}}\geq \frac{1}{\beta}\sup_{u\in \Y} \frac{\left(\langle \I \Lambda_{\Y}u,q\rangle\right)^2}{\|u\|^2_{\Y}}.
\end{aligned}
\end{equation*}
As $q\in G^\perp$, we set $u=\mathcal{E}^{-1}q=\Lambda^{-1}_{\Y}\IG^{-1}\mathcal{A}q$ and Theorem \ref{thm:equivalence_norms} guarantees that $\|u\|^2_{Y}\leq \E\LQ \df_{\max}(\w)\RQ \|q\|^2_{\mathcal{A}}$, so that
\[\sup_{0\neq (y,u)\in \mathcal{X}} \frac{\B^2(x,q)}{\|x\|^2_\mathcal{X}}\geq \frac{1}{\beta}\frac{\|q\|^4_{\mathcal{A}}}{\|u\|^2_{Y}}\geq \frac{1}{\E\LQ \df_{\max}(\w)\RQ} \left(\frac{1}{\beta}\|q\|^2_{\mathcal{A}}\right)= \frac{1}{\E\LQ \df_{\max}(\w)\RQ} \|q\|^2_{\Ps}. \]
\end{proof}

\subsection{Mean and Chebyshev semi-iterative approximations}
The optimality system \eqref{eq:optimality_system_Gperp} involves the non standard trial and test space $G^\perp$. To implement it efficiently, we can rely on the isomorphism $\mathcal{E}$ between $\Y$ and $G^\perp$, so that \eqref{eq:optimality_system_Gperp} is equivalent to: find $(y,u,p)\in \Y\times Y\times Y$ such that $\forall (v,w,r)\in Y\times Y\times Y$
\begin{equation}\label{eq:optimality_system_Gperp_E}
\begin{array}{r l r r}
\E\LQ \langle \mathcal{A}_\w (\Ex p)_\w,(\Ex v)_\w\rangle + \langle \Lambda_{L^2}\right((1+\gamma)(\Ex y)_\w-\gamma\E\LQ (\Ex y)_\w\RQ)\left),(\Ex v)_\w\rangle \RQ &= \E\LQ \langle \Lambda_{L^2} y_d,(\Ex v)_\w\rangle \RQ,\\
\langle \beta \Lambda_Y u- \Lambda_Y \E\LQ  (\Ex p)_\w\RQ,w\rangle &=0,\\
\E\LQ \langle \mathcal{A}_\w (\Ex y)_\w,(\Ex r)_\w\rangle\RQ -\E\LQ\langle \Lambda_Y u,(\Ex r)_\w\rangle \RQ & = \E\LQ \langle f,(\Ex r)_\w \rangle\RQ.
\end{array}
\end{equation}
A discretization of \eqref{eq:optimality_system_Gperp_E} leads to the discrete system $\SOP \mathbf{x}=E^\top \mathscr{S} E\mathbf{x}=\mathbf{f}$, where $\mathbf{f}=E^\top \mathbf{b}$, $\mathscr{S}$ and $\mathbf{b}$ are given by \eqref{eq:KKT_matrix_sym_disc} \footnote{Replacing the $L^2(D)$ Riesz operator $M_s$, with the $\Y$ Riesz operator $K$.}, while
\begin{equation*}
E:=\begin{pmatrix}
A^{-1}Z\unob K\\
& I_s\\
& & A^{-1}Z\unob K
\end{pmatrix},\text{ } \mathbf{x}=\begin{pmatrix}
\mathbf{y}\\ \mathbf{u}\\ \mathbf{p}
\end{pmatrix}\in\setR^{3 N_h},
\end{equation*}
The matrix $E$ is the discretization of the isomorphism $\Ex$.
The solution of $\SOP \mathbf{x}=\mathbf{f}$ using a Krylov method requires, on the one hand, to compute the matrix vector product with $E^\top \mathscr{S} E$, thus to invert the matrix $A^{-1}$ twice, and this must be computed exactly, or up to a very low tolerance. On the other hand, $E^\top \mathscr{S}E$ is a matrix of dimension $3N_h \lll (2 N+1)N_h$, which is the size of $\mathscr{S}$. Thus, a Krylov method is less prone to saturation of memory and instability due to orthogonalization.
According to the software, architecture and problem at hand, the pros could be larger than the cons, or viceversa.
As a preconditioner we use $\POP=E^\top B E$, where $B$ is the matrix representing the weighted norms defined in \eqref{eq:weighted_norms}, that is 
\begin{equation*}\label{eq:Preconditioner_OP}
B:=\begin{pmatrix}
M_\gamma +\beta A\\
& \beta K\\
& & \frac{1}{\beta} A
\end{pmatrix} \in \setR^{(2N+1)\cdot N_h\times (2N+1)\cdot N_h}.
\end{equation*}
A direct calculation leads to
\begin{equation}\label{eq:Preconditioner_OP_restricted}
\ \POP=\begin{pmatrix}
B_1\\
& B_2 \\
&& B_3
\end{pmatrix}:=\begin{pmatrix}
K\unob^\top \left( A^{-1} Z M_\gamma Z A^{-1} +\beta  Z A^{-1} Z\right)\unob K\\
& \beta K\\
& & \frac{1}{\beta} K \unob^\top Z A^{-1} Z\unob K.
\end{pmatrix}.
\end{equation}
Similarly to Section \ref{sec:matchschur}, we can approximate the inverse of $\POP$ using a mean approximation of the blocks $B_1$ and $B_3$. Replacing formally the matrix $A^{-1}$ with a matrix of equal size with $A^{-1}_0$ on the diagonal, we obtain
the mean preconditioner $\POPM:= E^\top B_M E$,
\begin{equation}\label{eq:Preconditioner_OP_restricted_mean}
\ \POPM^{-1}=\begin{pmatrix}
B^{-1}_{1,M} \\
& B^{-1}_{2,M}\\
& & B^{-1}_{3,M}
\end{pmatrix}:=\begin{pmatrix} K^{-1}A_0\left(M_s+\beta A_0\right)^{-1}A_0 K^{-1}\\
& \frac{1}{\beta} K^{-1}\\
& & \beta K^{-1}A_0K^{-1}
\end{pmatrix}.
\end{equation}
If the variance is large, we use $B^{-1}_{1,M}$ and $B^{-1}_{3,M}$ as preconditioners inside a Chebyshev Semi-Iterative method to invert $B_1$ and $B_3$. The two Chebyshev Semi-Iterative method can be executed separately and in parallel. To choose the parameters $\alpha,\underline{\lambda}$ and $\overline{\lambda}$, 
we rely on the following Lemma, obtained using the same argument of Lemma \ref{lemma:spectrumPinvL}.
\begin{lemma}\label{lemma:spectraB_1B_3}
The spectra of $B^{-1}_{1,M}B_1$ and of $B^{-1}_{3,M}B_3$ are real and bounded from below by $1$.
\end{lemma}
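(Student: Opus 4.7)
The plan is to mirror the argument used in Lemma \ref{lemma:spectrumPinvL}: I would exhibit each of the two preconditioned matrices as $I$ plus the product of a symmetric positive definite (SPD) and a symmetric positive semidefinite (SPSD) factor. Any such product is similar to its symmetric sandwich, which has real non-negative eigenvalues, so reality of the spectrum and the lower bound $\lambda_{\min}\ge 1$ both follow at once.

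Reality of the two spectra is in fact immediate, since $B_1, B_{1,M}, B_3, B_{3,M}$ are all SPD and hence $B_{j,M}^{-1}B_j$ is similar to the symmetric matrix $B_{j,M}^{-1/2}B_j B_{j,M}^{-1/2}$ for $j\in\{1,3\}$. For $B_3$ I would compute explicitly, exploiting $ZA^{-1}=A^{-1}Z=\mathrm{diag}(A_{\w_i}^{-1})$ to get $\unob^\top ZA^{-1}Z\unob=\sum_i\zeta_i A_{\w_i}^{-1}=:\Sigma$ and hence $B_3-B_{3,M}=\tfrac{1}{\beta}K(\Sigma-A_0^{-1})K$. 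The matrix $\Sigma-A_0^{-1}$ is SPSD by Jensen applied to the convex map $A\mapsto\vb^\top A^{-1}\vb$ on the cone of SPD matrices, which is exactly the convexity step used in Lemma \ref{lemma:spectrumPinvL}; conjugation by $K$ preserves SPSD, and the decomposition $B_{3,M}^{-1}B_3=I+B_{3,M}^{-1}(B_3-B_{3,M})$ closes that case.

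For $B_1$ I would first unravel $\unob^\top A^{-1}ZM_\gamma ZA^{-1}\unob$ using the explicit factorization of $M_\gamma$ from Lemma \ref{thm:M_gamma} together with the identity $\unob^\top MZ\unob=M_s$; a routine block computation then yields
\[
B_1-B_{1,M}=K\Bigl[\bigl(\widehat{\mathbb{E}}[A_\w^{-1}M_sA_\w^{-1}]-A_0^{-1}M_sA_0^{-1}\bigr)+\gamma\bigl(\widehat{\mathbb{E}}[A_\w^{-1}M_sA_\w^{-1}]-\Sigma M_s\Sigma\bigr)+\beta(\Sigma-A_0^{-1})\Bigr]K.
\]
The second bracketed term is the matrix variance $\widehat{\mathbb{E}}[(A_\w^{-1}-\Sigma)M_s(A_\w^{-1}-\Sigma)]$, visibly SPSD since $M_s$ is; the third is handled as for $B_3$.

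The hard part will be the first bracketed term $\widehat{\mathbb{E}}[A_\w^{-1}M_sA_\w^{-1}]-A_0^{-1}M_sA_0^{-1}$, which is a matrix-Jensen statement for the map $A\mapsto A^{-1}M_sA^{-1}$. This map is not operator convex in general (for $M_s=I$ it reduces to $A\mapsto A^{-2}$, which lies outside the L\"owner--Heinz range $[-1,0]\cup[1,2]$), so I expect to have to combine it with the already-positive matrix-variance and $\beta$-Jensen contributions so that its possibly indefinite cross terms are absorbed, or else to exploit the specific spectral relations between $M_s$ and the $A_{\w_i}$ inherited from the finite element discretisation (as captured by Lemmas \ref{thm:eigen_A}--\ref{thm:eigen_L}). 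Once this sign information is in hand, $\sigma(B_{1,M}^{-1}B_1)\subset[1,\infty)$ follows by the same ``$I+\mathrm{SPD}\cdot\mathrm{SPSD}$'' similarity argument used for $B_3$.
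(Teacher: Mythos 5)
Your treatment of $B_{3,M}^{-1}B_3$ is complete and is exactly the paper's argument: the paper proves the whole lemma by a one-line appeal to ``the same argument of Lemma~\ref{lemma:spectrumPinvL}'', which is precisely your decomposition $B_{3,M}^{-1}B_3=I+B_{3,M}^{-1}(B_3-B_{3,M})$ with $B_3-B_{3,M}=\tfrac{1}{\beta}K\bigl(\EQ\LQ A_\w^{-1}\RQ-A_0^{-1}\bigr)K\succeq 0$ by Whittle's convexity lemma and Jensen's inequality. Your block computation of $B_1-B_{1,M}$ is also correct. The gap is the one you flag yourself: you never establish the sign of $\EQ\LQ A_\w^{-1}M_sA_\w^{-1}\RQ-A_0^{-1}M_sA_0^{-1}$, nor of the full bracket, so the $B_1$ half of the statement is simply not proved. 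Your diagnosis of where the difficulty sits is accurate: that term is exactly what the convexity of $A\mapsto \vb^\top A^{-1}\vb$ cannot reach, and it is also the step that the paper's one-line proof silently skips.

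Moreover, neither of your two proposed repairs can close the gap, because $B_1\succeq B_{1,M}$ is false in general, even for genuine finite element matrices and even retaining the $\gamma$- and $\beta$-contributions. Take $\mathbb{P}_1$ elements on three equal subintervals of $(0,1)$ (so $N_h=2$), let $\bar A$ be the stiffness matrix for $\df\equiv 1$, $M_s$ the consistent mass matrix, and take two equally weighted realizations $A_{\w_{1,2}}=\bar A\pm tG$ with $G=g\,e_1e_1^\top$, i.e.\ the diffusion coefficient perturbed only on the first element. Expanding in $t$ (odd powers cancel by symmetry) gives
\begin{equation*}
K^{-1}\bigl(B_1-B_{1,M}\bigr)K^{-1}
= t^2g^2\Bigl[\alpha\, cc^\top+c_{11}\bigl(cd^\top+dc^\top\bigr)\Bigr]+O(t^4),
\end{equation*}
with $c=\bar A^{-1}e_1$, $d=\bar A^{-1}M_s\bar A^{-1}e_1$, $c_{11}=e_1^\top\bar A^{-1}e_1>0$, and $\alpha=(1+\gamma)\,c^\top M_sc+\beta c_{11}>0$ collecting \emph{all} the $\gamma$- and $\beta$-terms. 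For $N_h=2$ the determinant of the bracket equals $-c_{11}^2(c_1d_2-c_2d_1)^2$, independently of $\alpha$: writing $\det(X+Y)=\det X+\det Y+\mathrm{tr}(\mathrm{adj}(X)Y)$ with $X=\alpha cc^\top$, one has $\det X=0$ and $\mathrm{adj}(X)=\alpha\tilde c\tilde c^\top$ with $\tilde c\perp c$, so the cross term vanishes. Since $\bar A^{-1}$ and $M_s$ have positive entries, $M_s\bar A^{-1}e_1$ is not a multiple of $e_1$, hence $d\not\parallel c$ and the bracket is indefinite for \emph{every} $\beta,\gamma\geq 0$ (concretely, $\bar A\propto\bigl(\begin{smallmatrix}2&-1\\-1&2\end{smallmatrix}\bigr)$, $M_s\propto\bigl(\begin{smallmatrix}4&1\\1&4\end{smallmatrix}\bigr)$ give $c\propto(2,1)^\top$, $d\propto(8,7)^\top$, $c_1d_2-c_2d_1\neq0$). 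Consequently $\lambda_{\min}(B_{1,M}^{-1}B_1)<1$ for all small $t>0$; the spectrum remains real and positive since $B_1$ and $B_{1,M}$ are both symmetric positive definite, as you note, but the lower bound $1$ fails, albeit only by $O(t^2)$ --- negligible for small variance, which is why the paper's experiments do not detect it, but fatal for a proof. So no argument of the form ``show $B_1-B_{1,M}\succeq 0$'' can work: only the $B_3$ half (and the $\beta$-part of $B_1$) genuinely follows from the argument of Lemma~\ref{lemma:spectrumPinvL}, and the $B_1$ half would require extra structural hypotheses (e.g.\ simultaneously diagonalizable stiffness matrices, as for one-dimensional noise $\df(x,\w)=\xi(\w)\df_0(x)$, or a mass matrix commuting with the $A_{\w_i}$) that neither your proposal nor the paper supplies.
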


\begin{table}\caption{\small{Minimum and maximum in modulo eigenvalues $\lambda_{\min}-\lambda_{\max}$ of the preconditioned system}}\label{Tab:Eig_OP}
\small
\centering
\begin{tabular}{| c | c | c | c | c | c|}
\hline
$\beta$ &  &$ 10^{-2}$ & $10^{-4}$ &  $10^{-6}$ & $10^{-8}$\\ \hline
$\POP^{-1}\SOP$ & $\df_B(x,\w)$ & 0.54 - 1.55  &  0.36 - 1.50 & 0.36 - 1.37& 0.36 - 1.36\\
$\POPM^{-1}\SOP$ & $\df_B(x,\w)$ & 0.55 - 1.61  &  0.36 - 1.57 & 0.36 - 1.40 & 0.36 - 1.39\\
$\POPC^{-1}\SOP$ & $\df_B(x,\w)$ & 0.54 - 1.55  &  0.56 - 1.50 & 0.35 - 1.37& 0.35 - 1.36  \\
$\POP^{-1}\SOP$ &$\df_L(x,\w)$ & 0.41 - 1.90 &  0.68 - 1.80 &  0.68 - 1.73 & 0.68 - 1.73  \\
$\POPM^{-1}\SOP$ & $\df_L(x,\w)$ & 0.53 - 2.74 &  0.87 - 3.91 &  0.87 - 4.08 & 0.87 - 4.10  \\
$\POPC^{-1}\SOP$ &$\df_L(x,\w)$ & 0.41 - 1.91 &  0.68 - 1.79 &  0.68 - 1.73 & 0.68 - 1.73  \\
\hline
\end{tabular}
\\
\centering
$N_h=225$, $m=3$, $\sigma^2=0.5$, $\gamma=0.1$, $L^2=0.5$. $\Nit=2$ for both $\df_B$ and $\df_L$.\\\vspace*{0.3cm}
\begin{tabular}{| c | c | c | c | c | c|}
\hline
$\sigma^2$ &  & $ 0.1$ & $0.5$ &  $1$ & $1.5$\\ \hline
$\POP^{-1}\SOP$ &$\df_B(x,\w)$ & 0.36 - 1.36 &  0.35 - 1.36 &  0.34 - 1.36 & 0.32 - 1.36\\
$\POPM^{-1}\SOP$ & $\df_B(x,\w)$ & 0.36 - 1.36 &  0.36 - 1.39 &  0.35 - 2.56 & 0.35 - 8,48  \\
$\POPC^{-1}\SOP$ &$\df_B(x,\w)$ & 0.36 - 1.36 &  0.35 - 1.36 &  0.34 - 1.36 & 0.32 - 1.36 \\
$\POP^{-1}\SOP$ & $\df_L(x,\w)$ &  0.63 - 1.63  &  0.68 - 1.73 & 0.75 - 1.85&  0.83 - 2.00  \\
$\POPM^{-1}\SOP$ & $\df_L(x,\w)$ & 0.63 - 1.68  &  0.68 - 4.25 & 0.75 - 16.65& 0.80 - 59.98\\
$\POPC^{-1}\SOP$ & $\df_L(x,\w)$ & 0.63 - 1.63  &  0.68 - 1.73 & 0.75 - 1.85 & 0.80 - 2.00  \\
\hline
\end{tabular}
\\
\centering
$N_h=225$, $m=3$, $\beta=10^{-8}$, $\gamma=0.1$, $L^2=0.5$. $\Nit$ is equal to $2,2,4,4$ for $\df_B(x,\w)$ and equal to $2,4,6,8$ for $\df_L(x,\w)$.\\\vspace*{0.3cm}
\begin{tabular}{| c | c | c | c | c | c|}
\hline
  m  & & $ 2$ & $3$ &  $4$ & $5$\\ \hline
$\POP^{-1}\SOP$ & $\df_B(x,\w)$ & 0.36 - 1.37 &  0.36 - 1.37 & 0.36 - 1.37 &0.36 - 1.37\\
$\POPM^{-1}\SOP$ & $\df_B(x,\w)$ & 0.36 - 1.41 &  0.36 - 1.41 & 0.36 - 1.41& 0.36 - 1.41\\
$\POPC^{-1}\SOP$ & $\df_B(x,\w)$  & 0.36 - 1.37 &  0.36 - 1.37 & 0.36 - 1.37& 0.36 - 1.37  \\
$\POP^{-1}\SOP$ & $\df_L(x,\w)$  & 0.67 - 1.72 &  0.68 - 1.73 & 0.68 - 1.73 & 0.68 - 1.73\\
$\POPM^{-1}\SOP$ & $\df_L(x,\w)$  & 0.86 - 3.67 &  0.87 - 4.24 & 0.87 - 4.31 & 0.87 - 4.31\\
$\POPC^{-1}\SOP$ & $\df_L(x,\w)$  & 0.67 - 1.72 &  0.68 - 1.73 & 0.68 - 1.73& 0.68 - 1.73  \\
\hline
\end{tabular}
\\\centering
$N_h=225$, $\sigma^2=0.5$, $\beta=10^{-6}$, $\gamma=0.1$, $L^2=0.5$, $\Nit=2$ for both $\df_B(x,\w)$ and $\df_L(x,\w)$.
\end{table}
Tables \ref{Tab:Eig_OP} report the minimum and maximum in modulo eigenvalues of the preconditioned system using either $\POP,\POPM$ or $\POPC$. All preconditioners exhibits a $\beta$-robust spectrum, and in particular the mean preconditioner $\POPM^{-1}$ performs quite better than the algebraic one $\SLRM$ (see Table \ref{Tab:Eig_matching}).
The dependence of $\POP$ on $\sigma^2$ is weak, and similar to that of $\widetilde{\mathcal{P}}^{-1}$, analysed in Section \ref{sec:firstschur}, as Theorem \ref{thm:Operator_pre} involves the first moments of $1/\df_{\min}(\w)$ and $\df_{\max}(\w)$.
Finally Table \ref{Tab:Eig_OP} shows that all preconditioners are robust with respect to the number of collocation points.

\section{Numerical experiments}\label{sec:num}
The aim of this section is to further validate the theoretical results presented in Section \ref{sec:AlgebraicPreconditioners} and \ref{sec:OP}, and to compare the preconditioners analysed on a model problem.
We consider the domain $D=(0,1)^2$ discretized with a regular mesh of size $h$, and a finite element approximation using $\mathbb{P}_1$ finite elements.
For each preconditioner $\widetilde{P},\PLRM,\PLRC,\POPM$ and $\POPC$, we report the number of iterations and computational times in seconds to solve the saddle point system using preconditioned MINRES . 
Although it is tempting to compare the computational times and number of iterations among all preconditioners, we stress that $\POPM$ and $\POPC$ compute a different optimal control with respect to $\widetilde{P}$, $\PLRM$ and $\PLRC$, as the control belongs to $\Y$ and acts on the state equation through the Riesz map of $\Y$ (see \cite{elvetun2016pde} for an instance of application arising in electrocardiography).
In all experiments, the matrix $A$ is inverted approximately using the Fortran Algebraic MultiGrid (AMG) library \text{HSL\_MI20} \cite{https://doi.org/10.1002/nme.2758}, which is called using the Matlab interface. We specifically used two V-cycles with one iteration of the damped Jacobi smoother with parameter $\theta=\frac{8}{9}$ and $5$ levels. All other parameters are left to default values. The inverse of $C$ is computed approximately, inverting the mass matrix $M$ with 25 iterations of the Chebyshev semi-iterative method using as preconditioner the diagonal of $M$ itself. The damping parameters $\alpha$ as well as $\underline{\lambda}$ and $\overline{\lambda}$ are estimated once and for all using the mass matrix $M_s$.
The application of $A^{-1}$ and of $M^{-1}$ onto a vector is performed in parallel, using the Matlab Parallel Computing Toolbox.
Further, we compute once for all the LU decomposition of $A_0+\frac{1}{\sqrt{\beta}}M_s$, $\beta A_0+ M_s$ and $K$, which is feasible as their size corresponds to a single PDE discretization. Clearly, one could further approximate them using AMG, if $\beta$ is not too small, or using other iterative methods.
Finally, when using $\POPM$ and $\POPC$, we compute the exact action of $A^{-1}$ using eight iterations of the conjugate gradient method preconditioned by AMG, which are enough to have a (not preconditioned) residual of approximately $10^{-11}$. MINRES is stopped when the relative (not preconditioned) residual is smaller than $\tol=10^{-6}$. The simulations have been performed on a workstation equipped with an Intel® Core™ i9-10900X and 32 GB of RAM.

\subsection{Bounded random field with Stochastic Collocation}
We consider the bounded random field defined in \eqref{eq:bounded_random_field}, and we study the robustness of the preconditioners with respect to $\beta$ and $\sigma^2$. We use a full tensorized Gauss-Legendre quadrature formula with 5 points for each random variable $\xi_j(\w)$, $j=1,\dots,4$, thus $N=5^4=625$. The mesh size is $h=2^{-5}$ and $N_h=961$. The global system has approximately 1.2 millions degrees of freedom. The target state is $y_d=\sin(\pi x)\sin(\pi y)$.

First we consider $u\in L^2(D)$. Table \ref{Tab:iter_bounded_L^2} confirms that $\widetilde{P}$ is extremely efficient when $\beta$ is sufficiently large, but its performance deteriorates when $\beta\rightarrow 0$ as Theorem \ref{thm:Ptbounds} predicts. For a moderate value of $\sigma^2$, $\PLRM$ performs well unless for extremely small values of $\beta$ (e.g $\beta \approx 10^{-8}$), as remarked in Table \ref{Tab:Eig_matching}. $\PLRC$ recovers robustness at the price of additional Chebyshev semi-iterations, and leads to constant numbers of iterations and computational times as $\beta\rightarrow 0$.

Next we compare the preconditioners as $\sigma^2$ increases. We set $\beta=10^{-2}$ for $\widetilde{P}$, while $\beta=10^{-6}$ for $\PLRM$ and $\PLRC$. 
$\widetilde{P}$ exhibits a very weak dependence on $\sigma^2$. This is reflected both by the estimates of Theorem \ref{thm:Ptbounds} and by Table \ref{Tab:Eig_wathen}. Recall that $\df_B(x,\w)\geq 1$ for a.e. $\w$, so that $\EQ \LQ \frac{1}{\df_{\min}^2(\w)}\RQ$ is bounded as $\sigma^2$ grows. $\PLRM$ quickly becomes inefficient when $\sigma^2$ grows as expected, since the mean matrix $A_0^{-1}$ is a crude approximation of $\unob^\top Z A^{-1} Z\unob$ that does not take into account the variability of the stiffness matrices (see e.g. \cite{powell2009block}). The addition of the Chebyshev semi-iteration helps to reduce the computational time and number of iterations, but does not remove completely the dependence over $\sigma^2$, which however is comparable to that of $\widetilde{P}$.

\begin{table}[h]\caption{Number of iterations and computational time in seconds to reach a relative residual smaller than $10^{-6}$.}\label{Tab:iter_bounded_L^2}
\small
\centering
\begin{tabular}{| c | c | c | c | c | c|}
\hline
$\beta$ &$ 10^{-2}$ & $10^{-4}$ &  $10^{-6}$ & $10^{-8}$\\ \hline
$\widetilde{P}^{-1}\mathscr{S}$ & 29 (20.34)  &  37 (25.01) &  109 (72.28) & NA \\
$\PLRM^{-1}\mathscr{S}$ & 31 (42.69)  &  33 (44.59) &  37 (49.67) & 169 (221.39)  \\
$\PLRC^{-1}\mathscr{S}$ & 31 (89.45)  &  33 (94.51) & 31 (88.19) & 31 (87.83) \\
\hline
\end{tabular}\\
\centering
$\sigma^2=0.5$, $\gamma=10^{-1}$, $\Nit=2$. NA means MINRES did not converge in less than 200 iterations.\\\vspace{0.3cm}
\centering
\begin{tabular}{| c | c | c | c | c | c|}
\hline
 & \diaghead(-3,2){\hskip0.7cm\hsize0.7cm}%
 {$\beta$}{$\sigma^2$} & 0.1 & 0.5 &  1 & 1.5 \\ \hline
$\widetilde{P}^{-1}\mathscr{S}$ & $10^{-2}$ & 29 (20.64) &  29 (19.40) &  31 (20.80) & 33 (21.86) \\
$\PLRM^{-1}\mathscr{S}$ &  $10^{-6}$ & 27 (36.83) & 37 (48.94) & 87 (112.58) & 198 (254.53)  \\
$\PLRC^{-1}\mathscr{S}$ & $ 10^{-6}$ & 27 (76.43) & 31 (86.48) & 35 (132.94) & 39 (148.63)  \\
\hline
\end{tabular}\\
\centering
$\gamma=10^{-1}$. $\Nit=2$ for $\sigma^2\in \left\{0.1,0.5\right\}$ and $\Nit=4$ for $\sigma^2\in \left\{1,1.5\right\}$.
\end{table}

Next, we consider a control $u\in Y$ and the operator preconditioning approach. Table \ref{Tab:Iter_bounded_OP} shows that both $\POPM$ and $\POPC$ are very robust with respect to $\beta$. Interestingly, $\POPM$ performs well also for $\beta \approx 10^{-8}$ in contrast with $\PLRM$. Further, $\POPM$ is inefficient for larger values of $\sigma^2$. $\POPC$ performs better, but still exhibits a $\sigma^2$ dependence as expected, since the estimates of Theorem \ref{thm:Operator_pre} involve the first moments of $\frac{1}{\df_{\min}(\w)}$ and $\df_{\max}(\w)$.

\begin{table}[t]\caption{Number of iterations and computational time in seconds to reach a relative residual smaller than $10^{-6}$.}\label{Tab:Iter_bounded_OP}
\small
\centering
\begin{tabular}{| c | c | c | c | c | c|}
\hline
$\beta$ &$ 10^{-2}$ & $10^{-4}$ &  $10^{-6}$ & $10^{-8}$\\ \hline
$\POPM^{-1}\mathscr{S}$ & 27 (50.78)  &  43 (76.78) &  19 (35.24) & 19 (35.41) \\
$\POPC^{-1}\mathscr{S}$ & 28 (140.38)  &  46 (226.81) & 28 (139.79) & 28 (134.81) \\
\hline
\end{tabular}\\
\centering
$\sigma^2=0.5$, $\gamma=10^{-1}$, $\Nit=2$.\\\vspace{0.3cm}
\centering
\begin{tabular}{| c | c | c | c | c | c|}
\hline
$\sigma^2$  & 0.1 & 0.5 & 1 & 1.5\\ \hline
$\POPM^{-1}\mathscr{S}$ & 12 (23.54)  &  19 (35.17) &  37 (65.56) & 92 (159.50) \\
$\POPC^{-1}\mathscr{S}$ & 27 (133.96)  &  31 (138.60) & 35 (216.86) & 39 (236.68) \\
\hline
\end{tabular}\\
\centering
$\beta=10^{-6}$ and $\gamma=10^{-1}$. $\Nit=2$ for $\sigma^2\in \left\{0.1,0.5\right\}$ and $\Nit=4$ for $\sigma^2\in \left\{1,1.5\right\}$.\\\vspace*{0.3cm}
\end{table}

\subsection{Log-normal field with Monte Carlo sampling}\label{sec:num_lognormal}
In this subsection, we consider the log-normal field $\df_L(x,\w)$ defined in \eqref{eq:log-normal_field} with covariance function ${Cov}_g(x,y)=\sigma^2\exp\left(-\frac{\|x-y\|_2^2}{L^2}\right)$.
We consider a relatively small correlation length, setting $L^2=0.025$ and $\sigma^2=0.5$. Fig. \ref{Fig:log-normal_fields} shows two random realizations of $\df_L(x,\w)$.
\begin{figure}
\includegraphics[scale=0.3]{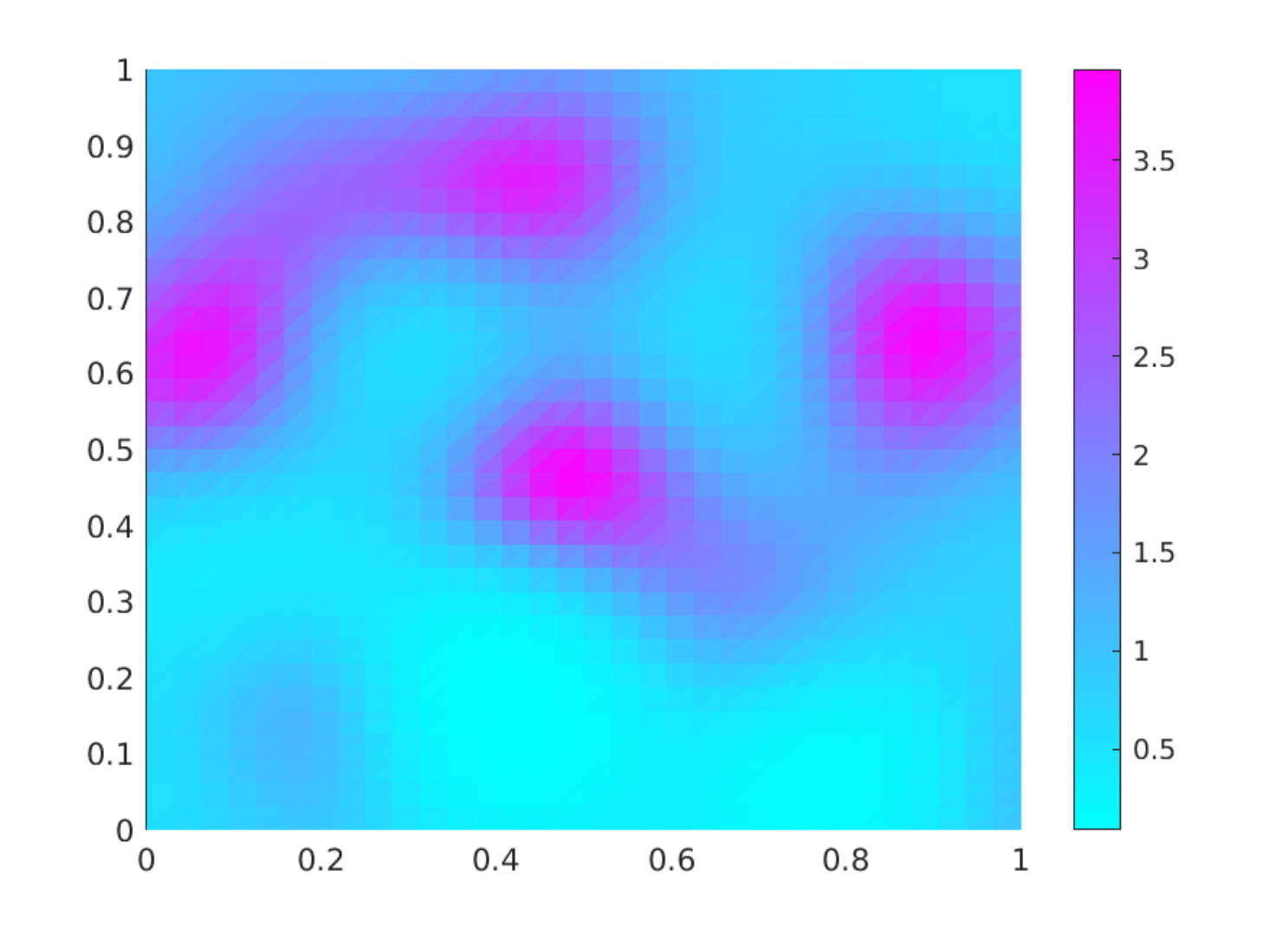}
\includegraphics[scale=0.3]{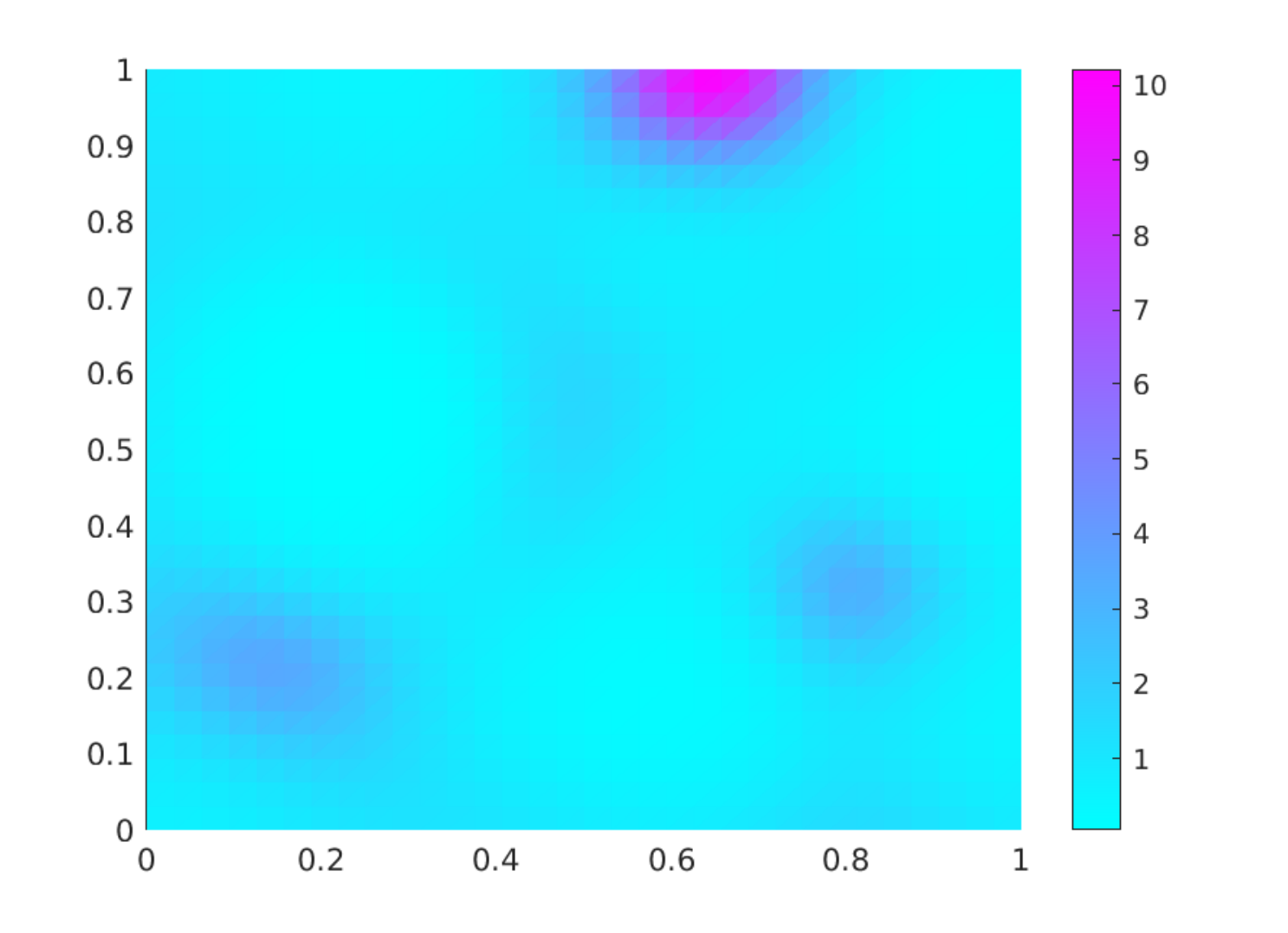}
\includegraphics[scale=0.3]{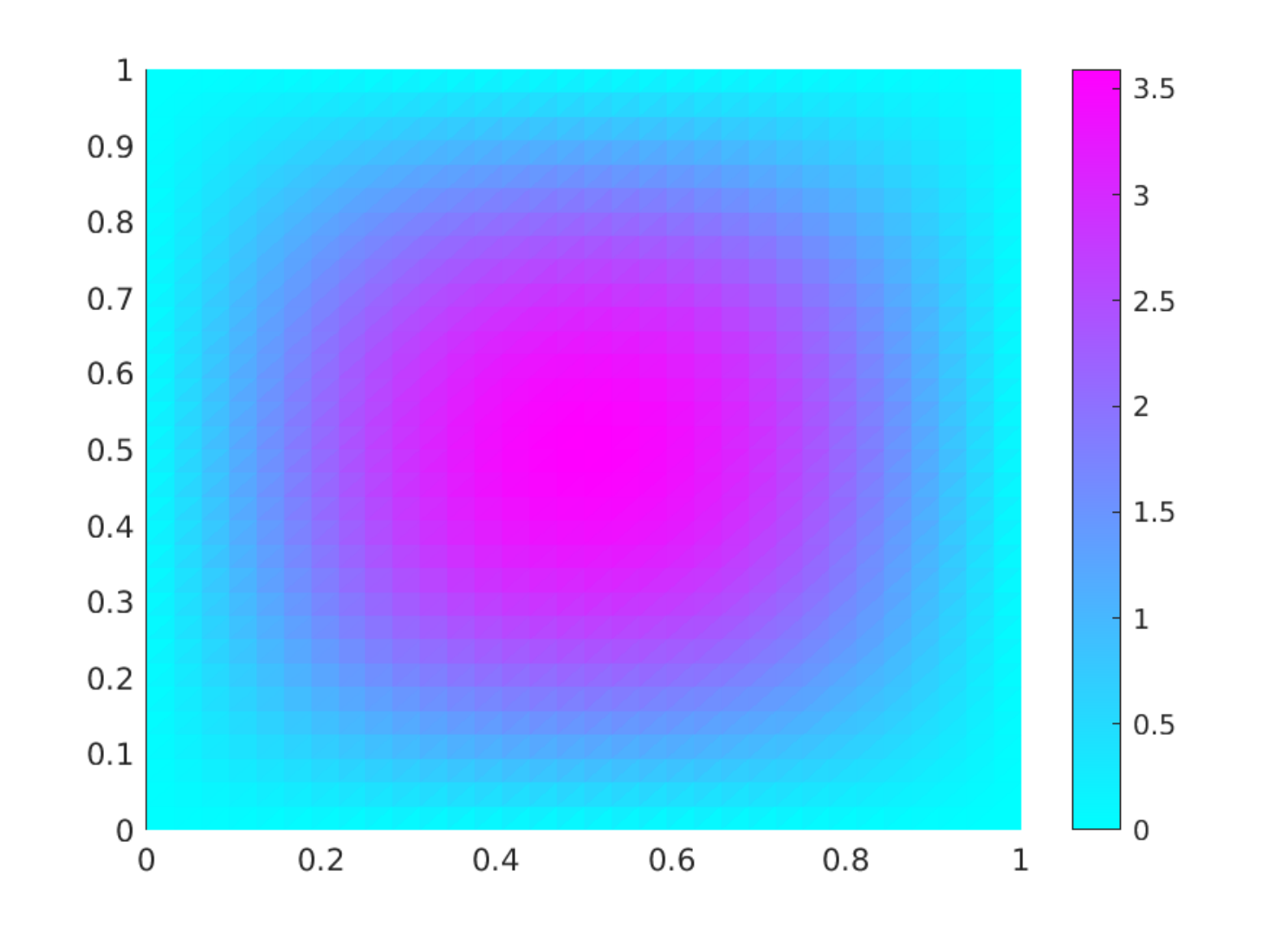}
\caption{The left and center panels show two random realizations of $\df_L(x,\w)$. The right panel shows the optimal control to reach the target state $y_d=\sin(\pi y)\sin(\pi x)$. Parameters: $L^2=0.025$, $\sigma^2=0.5$, $\beta=10^{-2}$, $\gamma=0.1$ and $N=10^4$.}\label{Fig:log-normal_fields}
\end{figure}
To keep $99\%$ of the variance, we would need to retain $M=37$ components in the Karhunen-Loève expansion \eqref{eq:log-normal_field}, so that SCM on tensor grids is not feasible due to the curse of dimensionality. We thus rely on a standard Monte Carlo with $N=10^4$ samples. The saddle point system involves approximately 19.2 millions degrees of freedom, and we first consider a control $u \in L^2(D)$. 

Table \ref{Tab:Iter_lognormal_L^2} reports the number of iterations and computational times in seconds for different values of $\beta$ and $\sigma^2$. Both $\widetilde{P}$ and $\PLRM$ reach the maximum number of iterations allowed (i.e. 200) before the tolerance of MINRES is achieved. In constrat, $\PLRC$ exhibits a weak dependence on $\beta$, but still remains quite efficient for the broad range values of $\beta$.
The performance of all preconditioners instead deteriorates when increases $\sigma^2$. We remark that $\sigma^2=1.5$ is quite a challenging setting: in our experiments we had $\max_{1\leq i\leq N}\frac{\df_{\max}(\w_i)}{\df_{\min}(\w_i)}=1.08e4$, that is the random diffusion field can vary up to four order of magnitude inside the domain (the expected variation is $\EQ\LQ \frac{\df_{\max}(\w)}{\df_{\min}(\w)}\RQ=396.39$).
\begin{table}[h]\caption{Number of iterations and computational time in seconds to reach a relative residual smaller than $10^{-6}$.}\label{Tab:Iter_lognormal_L^2}
\small
\centering
\begin{tabular}{| c | c | c | c | c | c|}
\hline
$\beta$ &$ 10^{-2}$ & $10^{-4}$ &  $10^{-6}$ & $10^{-8}$\\ \hline
$\widetilde{P}^{-1}\mathscr{S}$ & 37 (393.85)  &  55 (568.85) &  NA & NA \\
$\PLRM^{-1}\mathscr{S}$ & 37 (908.84)  &  53 (1270.7) &  NA & NA  \\
$\PLRC^{-1}\mathscr{S}$ & 37 (2576.0)   &  37 (2555.8) & 41 (2817.6) & 49 (3344.4) \\
\hline
\end{tabular}
\centering\\
$\sigma^2=0.5$, $\gamma=10^{-1}$, $N=10^4$, $\Nit=4$. NA means MINRES did not converge in less than 200 iterations.\\\vspace{0.3cm}
\begin{tabular}{| c | c | c | c | c | c|}
\hline
 & \diaghead(-3,2){\hskip0.7cm\hsize0.7cm}%
 {$\beta$}{$\sigma^2$} & 0.1 & 0.5 &  1 & 1.5 \\ \hline
$\widetilde{P}^{-1}\mathscr{S}$ & $10^{-2}$ & 31 (331.83) &  37 (390.43) &  43 (474.18) & 49 (555.86) \\
$\PLRM^{-1}\mathscr{S}$ &  $10^{-6}$ & 53 (1278.9) & NA & NA & NA \\
$\PLRC^{-1}\mathscr{S}$ & $ 10^{-6}$ & 31 (1538.3) & 41 (2903.4) & 59 (7321.4) & 79 (11724.7)  \\
\hline
\end{tabular}
\\ \centering
$\Nit$ equal to $\left\{2,4,8,10\right\}$ for $\sigma^2$ equal respectively to $\left\{0.1,0.5,1,1.5\right\}$; $\gamma=0.1$. NA means MINRES did not converge in less than 200 iterations.
\end{table}

Finally, we look for a $u \in Y$. Tables \ref{Tab:Iter_lognormal_OP} further confirm that both $\POPM$ and $\POPC$ lead to a $\beta-$robust convergence. The latter is again not $\sigma^2$-robust as the theory predicts, but the increase of the number of iterations is very modest. Nevertheless, we needed an increasing number of inner iterations as the mean approximations $B_{1,M}$ and $B_{3,M}$ lose their efficacy as preconditioners inside the Chebyshev semi-iterative method, and this results in a significant increase of computational times.
The development of improved preconditioners for the Chebyshev semi-iterative method should reduce the number $\Nit$ of inner iteration, and lead to a weak dependence not only of the number of iterations, but also of the computational times.

\begin{table}[t]\caption{Number of iterations and computational time in seconds to reach a relative residual smaller than $10^{-6}$.}\label{Tab:Iter_lognormal_OP}
\small
\centering
\begin{tabular}{| c | c | c | c | c | c|}
\hline
$\beta$ &$ 10^{-2}$ & $10^{-4}$ &  $10^{-6}$ & $10^{-8}$\\ \hline
$\POPM^{-1}\mathscr{S}$ & 39 (1190.8)  &  44 (1340.1) &  31 (961.2) & 32 (990.0) \\
$\POPC^{-1}\mathscr{S}$ & 37 (4937.6)  &  38 (5074.6) & 29 (3896.6) & 31 (4178.0) \\
\hline
\end{tabular}
\\
\centering
$\sigma^2=0.5$, $\gamma=10^{-1}$, $\Nit=4$.\\\vspace{0.3cm}
\begin{tabular}{| c | c | c | c | c | c|}
\hline
$\sigma^2$  & 0.1 & 0.5 & 1 & 1.5\\ \hline
$\POPM^{-1}\mathscr{S}$ & 16 (518.2)  &  31 (980.5) &  68 (2103.3) & 145 (4534.6) \\
$\POPC^{-1}\mathscr{S}$ & 26 (2420.8)  &  29 (3922.4) & 31 (6869.8) & 39 (10438.2) \\
\hline
\end{tabular}
\\\centering
$\Nit$ equal to $\left\{2,4,8,10\right\}$ for $\sigma^2$ equal respectively to $\left\{0.1,0.5,1,1.5\right\}$; $\beta=10^{-6}$ and $\gamma=10^{-1}$. 
\label{Tab:Iter_lognormal_sigma_OP}
\end{table}

\section{Conclusion}
In this manuscript, we studied preconditioners for the large saddle point systems which arise in the context of quadratic robust OCPUU. Our theoretical analysis casts light on the dependence of these preconditioners on the regularization parameter $\beta$ and on the variance $\sigma^2$ of the random field . 
For large values of $\beta$, the coupled saddle point system can be efficiently solved by preconditioning separately and in parallel all the state and adjoint equations. 
For small values of $\beta$, robustness can be recovered using two different preconditioners which require the additional solution of a linear system (whose size is equal to a single PDE discretization) which couples all the equations and involves the sum of the inverses of the stiffness matrices. We solved such reduced system using a mean approximation or a preconditioned Chebyshev semi-iterative method.
Our theoretical analysis characterizes the dependence of the preconditioners on the variance of the random field through either the first or second moment of $1/\df_{\min}(\w$) or $\df_{\max}(\w)$. The weak dependence for physically relevant ranges of $\sigma^2$ is confirmed by our numerical experiments in terms of number of iterations, but not necessarily in terms of computational times, as one needs to increase the number of inner Chebyshev semi-iterations for large values of $\sigma^2$.  
Hence, the combination of small values of $\beta$ and large values of $\sigma^2$ is still challenging, and the development of ad-hoc preconditioners for the reduced system involving the sum of the inverses of the stiffness matrices is expected to close the gap between the theoretical results and practical implementations.

\section{Acknowledgements}
The two authors acknowledge funding from the European Union’s Horizon 2020 research and innovation programme
under grant agreement N. 800898, project ExaQUte – EXAscale Quantification of Uncertainties for Technology and
Science Simulation. The first author acknowledge funding also from the Swiss National Science Foundation under the Project n. 172678 “Uncertainty Quantification techniques for PDE constrained optimization and random evolution equations”

\bibliographystyle{plain}
\bibliography{bib}

\end{document}